\newtheorem{thm}{Theorem}
\newtheorem{lem}{Lemma}
\newtheorem{cor}{Corollary}
\newtheorem{open}{Open problem}
\newtheorem{prop}{Proposition}\makeatletter
\theoremstyle{remark}
\newtheorem{dfn}{Definition}[section]
\newtheorem{rem}[dfn]{Remark}
\numberwithin{equation}{section}
\newcommand{\red}[1]{{\color{red}#1}}
\newcommand{\blue}[1]{{\color{blue}#1}}
\newcommand{\ca}{{\rm Cap}}
\newcommand{\Z}{{\mathbb Z}}
\newcommand{\RR}{{\mathbb R}}
\newcommand{\R}{{\mathcal R}}
\newcommand{\nbd}{\text{Nbd}}
\title[Capacity of the random walk range]{Strassen's LIL and a Phase transition for the capacity of the random walk under diameter constraints }
\author{Arka Adhikari}
\address{Arka Adhikari\hfill\break
Department of Mathematics, University of Maryland- College Park, College Park, MD, USA}
\email{arkaa@umd.edu}
\author{Izumi Okada}
\address{Izumi Okada\hfill\break
Department of Mathematics, University of Tokyo, 3-8-1 Komaba, Meguro-ku, Tokyo 153-0041, Japan.}
\email{iokada@ms.u-tokyo.ac.jp}
\thanks{Research supported in part by JSPS KAKENHI Grant-in-Aid  for Early-Career Scientists (No.~JP24K16931) (I.O.).}
\thanks{Research partially supported by NSF grant DMS 2102842}
\begin{document}
\maketitle

\begin{abstract}
We discuss the relationship between the capacity and the geometry for the range of the random walk for $d=3$. 
In particular, we consider how efficiently the random walk moves or what shape it forms in order to maximize its capacity.
In one of our main results, we show a functional law for the capacity of the random walk. 
In addition, we find that there is a phase transition for the asymptotics of the capacity of the random walk when we condition the diameter of the random walk. 
\end{abstract}

\section{Introduction and Main results}
In this paper, we study the classical topic of simple random walks on $\Z^3$. 
Newtonian capacity 
is given for $d \ge 3$, by
\begin{align*}
\ca (A)
:= \sum_{x \in A} 
\mathbb{P}^x(T_A =\infty)
=\lim_{\|z\|\to \infty}\frac{\mathbb{P}^z(T_A <\infty)}{G(z)}, 
\end{align*}
where $G(x)$ denotes the Green's function of the walk, $T_A$ is the first hitting time to $A$ of a walk and $ \| \cdot \|$ is the Euclidean norm.
We let $R_n$ denote the capacity of the set of the random walk range up to the time $n$; that is, $R_n$ is a shorthand for $\ca(\{S_0,\ldots ,S_n\})$.  
The capacity of the random walk is a random variable 
that expresses the shape of the random walk 
using the hitting probabilities. 
The capacity of a random walk has been known to be an important quantity to understand when analyzing the random interlacement process (cf. \cite{SZ10}) and the uniform spanning tree (cf. \cite{HS20}). However, it is also an interesting quantity in its own right. 
The capacity is known to grow linearly in $n$ when $d\ge 5$, of the order 
$n/\log n$ when $d=4$, and $\sqrt{n}$ when $d=3$. The limit laws for the 
appropriately normalized fluctuations $R_n-\mathbb{E}R_n$ were established in the work of Chang \cite{Ch17}.
These limit laws have different forms in the three cases $d = 3$, $d = 4$, and $d\ge 5$. 
Importantly, in dimension three
$R_n$ is unconcentrated and has a limit law described by a non-trivial random variable.
In one of our main results, we express the shape of random walks in terms of functional spaces. 

To state the main results, for $d=3$, let
\begin{align*}
f_n(\cdot):=\frac{R_{n\cdot}}{h_3(n)}  \,, \qquad
g_n(\cdot):=\frac{R_{n\cdot}}{\hat{h}_3(n)}, \,
\end{align*}
where 
\begin{align*}
h_3(n):=\frac{\sqrt{6}\pi}{9}  (\log^{(3)} n)^{-1} \sqrt{n \log^{(2)} n}, \quad
\hat{h}_3(n):= \frac{\sqrt{6} \pi^2}{9} \sqrt{ n (\log^{(2)} n)^{-1}}
\end{align*}
with $\log^{(k)}:= \log \log^{(k-1)}$ with $\log^{(1)} =\log$. 
In \cite{DemboOkada}, Dembo and the second author showed  $\limsup_{n\to \infty} f_n(1)=1$ and $\liminf_{n\to \infty} g_n(1)=1$ a.s. 
Our result in this paper will give new insights into \cite{DemboOkada}. 
For $\ell_n \in C([0,1])$, $n \in \mathbb{N}$, let 
\begin{align*}
&C_1(\{\ell_n\}_{n=1}^{\infty}):=\{g \in C([0,1]): \exists (n_k) \text{ s.t. }  \lim_{k\to \infty} \sup_{x \in [0,1] }|\ell_{n_k}(x)-g(x)|=0\},
\end{align*}
and for $\ell_n \in \mathcal{M}$, $n \in \mathbb{N}$,
\begin{align*}
& C_2(\{\ell_n\}_{n=1}^{\infty}):=\{g \in \mathcal{M} : \exists (n_k) \text{ s.t. } \lim_{k\to \infty} |\ell_{n_k}(x)-g(x)|=0\text{ for continuity points }x \text{ of } g\}. 
\end{align*}
Here, $\mathcal{M}$ is the space of functions $\ell:[0, \infty)\to [0,\infty]$ such that $\ell(0)=0$, $\ell$ is right continuous on $(0,\infty)$, non-decreasing and $\lim_{t \to +\infty} \ell(t)=\infty$. 
$\{\ell_n\} \Rightarrow_i A$ means that $\{\ell_n\}$ is relatively compact and $C_i (\{\ell_n\})=A$ for $i=1,2$. 
We define the following functional sets:
\begin{align*}
  &\mathcal{S}:=\{\ell \in C([0,1]): \text{ absolutely continuous, }
  \int_0^1 \ell'(x)^{2} \text{d}x \le 1, 
  \ell(0)=0\} , \\ 
  & \mathcal{K}:=\{\ell\in \mathcal{M}:\int_0^\infty \ell(x)^{-2} \text{d}x\le 1 \}.
\end{align*}
These functional sets are the same as the sets found in the study of the functional LIL of the  standard  Brownian motion (cf. \cite{CKW, Strassen}).

To ease the presentation, we omit hereafter the integer-part symbol $\lceil \cdot \rceil$. 
If there exists $c\in (0,\infty)$ 
such that $c a_n \le b_n \le c^{-1}a_n$, 
we write $a_n \asymp b_n$. 
$a_n\gg b_n$ means $a_n/b_n \to \infty$ as $n\to \infty$ and $a_n\ll b_n$ means $b_n/a_n \to \infty$ as $n\to \infty$.

\begin{thm}\label{m1}
    For $d=3$, almost surely, as $n \to \infty$, 
\begin{align*}
\{f_n \} \Rightarrow_1 \mathcal{S},\, \qquad
\{g_n \} \Rightarrow_2 \mathcal{K}. \,
\end{align*}
\end{thm}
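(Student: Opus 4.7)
The strategy is to bootstrap the pointwise LIL of Dembo and the second author into a functional one, in the Strassen form for $\{f_n\}$ and the Chung form for $\{g_n\}$. The main supplementary ingredients are: (i) subadditivity of Newtonian capacity, $\ca(A\cup B)\le \ca(A)+\ca(B)$, yielding the sandwich $0\le R_{n(t+s)}-R_{nt}\le \ca(\{S_{nt+1},\ldots,S_{n(t+s)}\})$; (ii) the moderate-deviation tail estimates on $R_m$ that already underlie the pointwise LIL; and (iii) the scaling $h_3(nc)\sim\sqrt{c}\,h_3(n)$ and $\hat h_3(nc)\sim\sqrt c\,\hat h_3(n)$ for fixed $c>0$. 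These feed into a Borel--Cantelli argument after partitioning the time axis into a finite mesh.

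For $\{f_n\}\Rightarrow_1 \mathcal{S}$ I would proceed in three steps. \emph{Relative compactness} in $C([0,1])$: monotonicity of $t\mapsto f_n(t)$ together with $\limsup_n f_n(1)=1$ gives a uniform sup bound, while (i) combined with an upper-tail maximal inequality yields an a.s.\ modulus of continuity, so Arzela--Ascoli applies. \emph{Inclusion ``cluster set $\subseteq \mathcal{S}$''}: for a subsequential limit $g=\lim f_{n_k}$, partition $[0,1]$ into $0=t_0<\cdots<t_m=1$ and use (i) to bound each increment $g(t_{i+1})-g(t_i)$ by the weak limit of $R^{(i)}_{n_k(t_{i+1}-t_i)}/h_3(n_k)$ for independent fresh walk segments $R^{(i)}$; the joint upper-tail estimate across all $i$, combined with (iii), forces $\sum_i (g(t_{i+1})-g(t_i))^2/(t_{i+1}-t_i)\le 1$ in the refinement limit, hence $\int_0^1 (g')^2\le 1$. \emph{Inclusion ``$\mathcal{S}\subseteq $ cluster set''}: approximate a target $\ell\in\mathcal{S}$ by piecewise linear functions; along a rapidly growing subsequence $n_k$, use the lower-tail construction from \cite{DemboOkada} on each block together with approximate independence of disjoint walk segments to hit the prescribed increments with positive conditional probability, then invoke Borel--Cantelli.

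The Chung half $\{g_n\}\Rightarrow_2 \mathcal{K}$ runs along the same three steps in the Chung topology on $\mathcal{M}$. The upper inclusion uses the lower tail of $R_m$ on each partition interval, rearranged via a Jensen-type inequality to yield $\int_0^\infty \ell^{-2}dx\le 1$; the lower inclusion confines the walk to small neighborhoods on successive time blocks to depress the capacity into the prescribed shape.

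The main obstacle is the lower (construction) halves: successive walk segments are not independent, and the capacity of a union is strictly less than the sum of capacities. To handle this I would insert short ``relaxation'' blocks on which the walk is conditioned to return to a ball of radius $o(\sqrt n)$ around the origin, at a probability cost absorbed into the $\log_2,\log_3$ slack by working along a subsequence such as $n_k=2^{2^k}$. With the blocks spatially well-separated, capacities of the union are nearly additive and disjoint segments are approximately independent, so the pointwise LIL constructions from \cite{DemboOkada} can be glued block-by-block into the functional statement.
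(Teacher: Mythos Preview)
Your three-step outline and your treatment of the upper inclusion ``cluster set $\subseteq\mathcal{S}$'' via subadditivity plus independence of the segments $R[t_{i-1}n,t_in]$ is essentially what the paper does (Lemma~\ref{lem:Uuprbnd} and Corollary~\ref{cor:halfLipschitz}). The genuine gap is in the lower inclusion ``$\mathcal{S}\subseteq$ cluster set''. Your relaxation-block idea is confused: conditioning the walk to return to an $o(\sqrt n)$-ball around the origin between blocks does not spatially separate the blocks---they all emanate from the origin and overlap there---so near-additivity of capacity does not follow. More fundamentally, the hard direction is the \emph{lower} bound on the increment $R_{nt_i}-R_{nt_{i-1}}$, and no amount of subadditivity or single-block tail input from \cite{DemboOkada} yields this. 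The paper's route is different: it conditions on a single geometric event $\mathcal{F}^{\mathcal{A},\delta}_{n,\mathcal{T}}$ prescribing the first coordinate $S^1_{t_in}$ at every mesh point (the walk is stretched by amount $a_i\phi((t_i-t_{i-1})n)$ on block $i$), whose probability factorises over blocks by the Markov property. The substantial work is Proposition~\ref{mainpro1}: conditioned on $\mathcal{F}$, a detailed Green's-function analysis (Lemmas~\ref{thm:LowerboundofGreenspec}, \ref{thm:LowerboundofGreengen}, and Section~2.3) shows that the within-block Green's sums dominate the cross-block ones by a factor $\log_3 n$, forcing each increment $R_{nt_i}-R_{nt_{i-1}}$ to lie in $[a_i(1-\tilde\delta),a_i(1+\tilde\delta)]\,h_3((t_i-t_{i-1})n)$ with conditional probability $1-o(1)$. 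This multi-point capacity--geometry link is the paper's main technical contribution and is not available from the pointwise LIL.

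For the Chung half your sketch is too vague to succeed. The paper does not run a parallel Green's-function argument but passes through the coupling $R_n\sim\frac13\ca_B(B[0,n/3])$ to Brownian capacity. The upper bound on $\mathbb{P}(\tilde E^{\mathcal{A},\delta}_{n,\mathcal{T}})$ uses the Poincar\'e--Carleman--Szeg\H{o} isoperimetric inequality to convert the capacity constraint $R_{t_in}\le a_i\hat h_3(n)$ into a volume bound on $\mathrm{Nbd}(B[t_{i-1}n/3,t_in/3])$, which then factorises over independent Brownian blocks (Lemma~\ref{lem:Vbnd}). The lower bound confines $\sup\|B_s\|$ on each block and rules out anomalously small sausage volume, with Anderson's inequality controlling the endpoint conditioning (Lemma~\ref{lem:LwrbndF}). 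Your ``Jensen-type inequality'' and ``confine the walk'' do not name these tools, and without the volume--capacity reduction there is no obvious way to decouple the blocks for the liminf problem.
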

In general, extending Strassen’s LIL beyond the pointwise distribution of ordinary Brownian motion is of substantial theoretical importance, as it demonstrates that the functional law of the iterated logarithm is not merely a consequence of Gaussian path structure, but rather a manifestation of deeper probabilistic principles. 
By describing the set of almost sure limit points, it establishes a connection between probability theory and analysis. For example, $\mathcal{S}$ provides a link with the unit ball of the Sobolev space $H^1_0(0,1)$.  

{
Indeed,  in contrast to the previous work \cite{DemboOkada}, we had to develop an essentially new methodology to develop these connections with analysis and determine the functional properties of the capacity of the random walk range.} At a conceptual level, in order to capture fine asymptotic fluctuations for a wide class of random walks, it is necessary to perform a detailed analysis of the joint distributions associated with multiple points of the random walk range. This refined control of multipoint interactions goes beyond the techniques employed in previous studies \cite{DemboOkada} and forms a key component of our approach, especially since the formulas defining the capacity correlate the random walk over long time scales. 

Next, we discuss the relationship between the diameter and the capacity of a random walk range. 
Set $D_n:= \max _{0 \le i \le n}\|S_i\|$, that is, $D_n$ is the diameter of a random walk range. 
Let 
\begin{align*}
&j_3(n,k_n)=j_3(n):= k_n \sqrt{n \log^{(2)} n}(\log^{(3)} n)^{-1}\\
\text{ with } 
 &\sqrt{ n (\log^{(2)} n)^{-1}}(\log^{(3)} n)^2 \ll j_3(n) \le  \sqrt{ (2/3) n \log^{(2)} n}. 
\end{align*}

We let $\mathcal{B}(r)=\mathcal{B}_r$ denote the set of the points on $\Z^3$ included in the ball of radius $r$ centered at $0$.
Set
\begin{align*}
  &A_n^1 := \{ h_3(n)-R_n < \epsilon R_n\}, 
  \quad A_n^2:= \{\ca (\mathcal{B}_{j_3(n)})-R_n < \epsilon R_n\}, \\
  &B_n := \{j_3(n) <D_n < (1+\epsilon)j_3(n)  \}.
\end{align*}

\begin{thm}\label{m2}
Consider $d=3$. 
(i) If $k_n \to \infty$, 
for sufficiently small $\epsilon>0$, 
    \begin{align*}
\mathbb{P}( A_n^1 \cap B_n \quad \text{i.o.})=1, \quad 
\mathbb{P}( A_n^2 \cap B_n \quad \text{i.o.})=0.
\end{align*}
We also remark that $h_3(n)$ is the largest possible value that $R_n$ can take infinitely often, so we cannot improve on the lower bound of $R_n> (1-\epsilon) h_3(n)$ in the above statement.


(ii) If $k_n \asymp 1$, for  sufficiently small $\epsilon>0$, 
\begin{align*}
\mathbb{P}( A_n^1 \cap B_n \quad \text{i.o.})=0, \quad 
\mathbb{P}( A_n^2 \cap B_n \quad \text{i.o.})=0.
\end{align*}

(iii) If $k_n \to 0$, 
for sufficiently small $\epsilon>0$
   \begin{align*}
\mathbb{P}( A_n^1 \cap B_n \quad \text{i.o.})=0, \quad 
\mathbb{P}( A_n^2 \cap B_n \quad \text{i.o.})=1. 
\end{align*}
We also remark that under the condition that $D_n < j_3(n)$, $R_n$ can be no bigger than $\ca(\mathcal{B}_{j_3(n)})$, so we cannot improve on the lower bound of $R_n$ given in the above statement. 

\end{thm}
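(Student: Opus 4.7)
The proof decomposes into three cases; within each, a ``probability zero'' and a ``probability one'' statement. I would first dispose of the zero statements and then prove the positive statements by Borel--Cantelli II along sparse subsequences. For the easy zero directions in cases (i) and (iii), I would combine the Dembo--Okada LIL upper bound $R_n \le (1+o(1)) h_3(n)$ a.s.\ with the elementary inequality $R_n \le C D_n$ (the ball-capacity bound in dimension three). In case (i), on $B_n$ the event $A_n^2$ forces $R_n \ge (1+\epsilon)^{-1}\ca(\mathcal{B}_{j_3(n)}) \asymp k_n h_3(n)$, violating the LIL once $k_n \to \infty$. In case (iii), $R_n \le C D_n \le C(1+\epsilon) j_3(n) \asymp k_n h_3(n) = o(h_3(n))$ rules out $A_n^1$ on $B_n$. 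These two observations handle the first zero claim of (i) and the first zero claim of (iii).

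\textbf{Positive claims via sparse subsequences.} For the two positive statements I would work along a very sparse subsequence $n_m$ with disjoint time windows $I_m := [n_{m-1}, n_m]$, and introduce events $E_m$ measurable with respect to $(S_j)_{j \in I_m}$ that imply $A_{n_m}^i \cap B_{n_m}$. Since $n_m \gg n_{m-1}$, the contribution of the walk up to time $n_{m-1}$ to both $R_{n_m}$ and $D_{n_m}$ is negligible (of order $\sqrt{n_{m-1} \log_2 n_{m-1}} \ll j_3(n_m)$), so realizing the joint event inside $I_m$ suffices. Disjointness makes the $E_m$ independent, and Borel--Cantelli II closes the argument once $\sum_m \mathbb{P}(E_m) = \infty$. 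For case (iii), $E_m$ demands that the walk in $I_m$ (a) stays in a ball of radius $(1+\epsilon)j_3(n_m)$ around $S_{n_{m-1}}$, (b) reaches distance $j_3(n_m)$, and (c) covers a subset of capacity at least $(1-\epsilon)\ca(\mathcal{B}_{j_3(n_m)})$. The confinement-plus-excursion cost is subpolynomial in $n_m$ thanks to the hypothesis $\sqrt{n/\log_2 n}(\log_3 n)^2 \ll j_3(n) \le \sqrt{(2/3) n \log_2 n}$, while a local-time lower bound inside the ball supplies the capacity-covering condition; suitable sparsity of $n_m$ then forces $\sum \mathbb{P}(E_m) = \infty$. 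For case (i), $E_m$ concatenates a forced excursion in an initial subinterval of $I_m$ with the Dembo--Okada extremal mechanism in the remainder producing $R_{n_m} \approx h_3(n_m)$; the same Borel--Cantelli II setup applies.

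\textbf{Case (ii) and the main obstacle.} What remains are the case (ii) zero statements, which are the technical heart of the theorem. With $k_n \asymp 1$ one has $\ca(\mathcal{B}_{j_3(n)}) \asymp h_3(n)$, so both events reduce to $R_n \asymp h_3(n) \asymp D_n$, and one must show summability of
\begin{align*}
\mathbb{P}\bigl(R_n \ge (1-\epsilon) h_3(n),\; D_n \in [j_3(n), (1+\epsilon) j_3(n)]\bigr).
\end{align*}
The Dembo--Okada LIL only gives control of the $R_n$-marginal; the joint estimate requires identifying the characteristic diameter of a near-LIL-maximizer of $R_n$ and showing that this characteristic scale is separated from $h_3(n)$ by a factor of at least $(\log_3 n)^{\pm 1}$, so that pinning $D_n$ at $k_n h_3(n)$ with $k_n \asymp 1$ incurs an extra summable penalty. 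This refined joint large deviation, built on top of the Dembo--Okada mechanism and essentially saying that the LIL-maximum of $R_n$ is realized by ``either spread-out or ball-like'' ranges but never at the intermediate diameter, is the principal obstacle and would occupy the technical core of the argument.
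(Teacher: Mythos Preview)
Your handling of the easy zero claims in (i) and (iii) is correct and matches the paper. The genuine gap is in the positive claim of (i).

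\textbf{The gap in part (i).} You propose to realize $A_{n_m}^1 \cap B_{n_m}$ by ``a forced excursion in an initial subinterval'' followed by ``the Dembo--Okada extremal mechanism in the remainder producing $R_{n_m}\approx h_3(n_m)$.'' But the Dembo--Okada mechanism is precisely the walk that stretches in an approximately straight line of length $\phi(n)=\sqrt{(2/3)n\log_2 n}$; running it on most of $I_m$ forces $D_{n_m}\approx \sqrt{n_m\log_2 n_m}$. Since $j_3(n)=k_n\sqrt{n\log_2 n}/\log_3 n$ and the hypothesis $k_n\to\infty$ allows $k_n=o(\log_3 n)$ (indeed the constraint $j_3(n)\le\sqrt{(2/3)n\log_2 n}$ means $k_n\le\log_3 n$), you would get $D_{n_m}\gg (1+\epsilon)j_3(n_m)$ and miss $B_{n_m}$ entirely. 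The paper confronts exactly this: once $j_3(n)\ll\sqrt{n\log_2 n}$, it replaces the straight line by a walk that wraps around an inscribed cube, traversing $\asymp \log_3 n/k_n$ equally spaced cross-sectional squares at the critical speed $\sqrt{(2/3)\log_2 n/n}$. The delicate part (Theorem~\ref{thm:lowerbndicap}) is showing that the cross terms in $\sum_{l'}G(S_l-S_{l'})$ coming from distinct layers are $o(n/h_3(n))$, so the capacity is still $(1-o(1))h_3(n)$ despite the folding. Your proposal contains no substitute for this construction.

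\textbf{Misreading of part (ii).} You describe the case (ii) zero statements as a joint large-deviation estimate needing ``an extra summable penalty.'' The paper does not proceed this way. For $A_n^1$: it shows that on $B_n\cap\{\hat T_{b_n}\ge t_n\}$ (the latter holding eventually a.s.), the confinement to a ball of radius $\asymp j_3(n)\asymp h_3(n)$ adds a global term $\asymp j_3(n)^{-1}$ to every Green's function sum, which \emph{deterministically} forces $R_n\le(1-\epsilon)h_3(n)$. No probability estimate on $A_n^1\cap B_n$ is needed. For $A_n^2$: the argument is geometric (Lemmas~\ref{cap1}--\ref{cap3}), showing that any range contained in $\mathcal{B}_{j_3(n)}$ must leave some cap $E^1_{j_3(n)}$ sparsely visited, and that excising this cap already drops the capacity by a fixed fraction of $\ca(\mathcal{B}_{j_3(n)})$. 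Again this is a pointwise bound on the event, not summability.

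\textbf{Part (iii).} Your sketch (``a local-time lower bound supplies the capacity-covering condition'') is too loose to be a plan: a walk confined to $\mathcal{B}_{j_3(n)}$ for time $n$ can easily have capacity $o(j_3(n))$ if it clusters. The paper again uses an explicit wrapping construction, now along $m$-gon cross-sections of the sphere, consuming only $O(nk_nt(n))=o(n)$ steps, and checks that the resulting Green's function sums converge (as $m\to\infty$) to the equilibrium potential of the sphere. The probability cost $\exp[-Ck_nt(n)\log_2 n]$ is $m^{-o(1)}$ along $n_m=m^m$ because $k_n\to 0$, which is what makes Borel--Cantelli II work.
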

Another way to express this theorem is given by the following corollary.


\begin{cor}\label{main:cor}
    For $d=3$, almost surely,
    \begin{align*}
            1 + \delta(\epsilon) 
        =\begin{cases}
        \limsup_{n\to \infty} \frac{R_n}{h_3(n)} 1_{B_n} \quad &\text{ if } (i) \quad k_n \to \infty, \\
        \limsup_{n\to \infty} \frac{R_n}{ c_0 h_3(n)} 1_{B_n} \quad &\text{ if } (ii) \quad k_n \asymp 1, \\
        \limsup_{n\to \infty} \frac{R_n}{\ca (\mathcal{B}_{j_3(n)})} 1_{B_n} \quad &\text{ if } (iii) \quad k_n \to 0, 
        \end{cases}
    \end{align*}
    where $\delta(\epsilon)$ is a deterministic sequence with $\delta(\epsilon)\to 0$ as $\epsilon \to 0$ and in the case $(ii)$,   
    $$0 
    <c_0<1$$ with $c_0$ a deterministic constant (we know that $c_0$ is deterministic by the Hewitt-Savage zero-one law (see the explanation in Subsection 4.3 in \cite{DemboOkada})). 
    In the case (ii), we also have $\limsup_{n\to \infty} \frac{R_n}{ \ca (\mathcal{B}_{j_3(n)})} 1_{B_n} = c_1(1 + \delta(\epsilon))$, where $0<c_1<1$ with $c_1$ a deterministic constant. 
    
\end{cor}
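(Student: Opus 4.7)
The plan is to read each clause of the corollary as a $\limsup$-form reformulation of the corresponding clause of Theorem \ref{m2}, with $\delta(\epsilon)$ absorbing the $(1+\epsilon)$ factors built into the definitions of $A_n^1$, $A_n^2$, and $B_n$; each regime is handled separately.

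For case (i), the event $A_n^1$ is exactly $\{R_n \geq h_3(n)/(1+\epsilon)\}$, so Theorem \ref{m2}(i) immediately yields
\[
\limsup_{n\to\infty} \frac{R_n}{h_3(n)} 1_{B_n} \geq \frac{1}{1+\epsilon} \quad \text{a.s.}
\]
The matching upper bound $\limsup_n R_n/h_3(n) \leq 1$ a.s.\ is the Dembo--Okada LIL cited in the excerpt via $\limsup f_n(1) = 1$; together these trap the $\limsup$ in $[1/(1+\epsilon), 1]$, giving the stated identity with $\delta(\epsilon) \in [-\epsilon/(1+\epsilon), 0]$. Case (iii) is parallel: the lower bound is Theorem \ref{m2}(iii) via $A_n^2 = \{R_n \geq \ca(\mathcal{B}_{j_3(n)})/(1+\epsilon)\}$, and the upper bound is geometric, since on $B_n$ the range is contained in $\mathcal{B}_{(1+\epsilon) j_3(n)}$, so
\[
R_n \leq \ca(\mathcal{B}_{(1+\epsilon) j_3(n)}) = (1+O(\epsilon)) \ca(\mathcal{B}_{j_3(n)})
\]
by the standard linearity $\ca(\mathcal{B}_r) \asymp r$ of the capacity of a lattice ball in $\Z^3$. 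The $O(\epsilon)$ errors vanish as $\epsilon \to 0$, which is all that $\delta(\epsilon) \to 0$ demands.

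Case (ii) is the main obstacle, because Theorem \ref{m2}(ii) contains two vanishing-probability statements but no direct infinitely-often lower bound. My plan is first to apply the Hewitt--Savage zero--one law (as the corollary notes, referring to \cite{DemboOkada}) to deduce that, for each $\epsilon > 0$, the random variable $L(\epsilon) := \limsup_n R_n h_3(n)^{-1} 1_{B_n}$ is a.s.\ a deterministic constant; let $c_0$ denote the relevant (stabilized, as $\epsilon$ varies in the admissible range) value of $L(\epsilon)$. The upper bound $c_0 < 1$ comes from the first half of Theorem \ref{m2}(ii): for the specific $\epsilon_0$ at which $\mathbb{P}(A_n^1 \cap B_n \text{ i.o.}) = 0$, eventually $R_n \leq h_3(n)/(1+\epsilon_0)$ on $B_n$, so $L(\epsilon_0) \leq 1/(1+\epsilon_0) < 1$, and monotonicity of $L$ in $\epsilon$ upgrades this to $c_0 < 1$. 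The strict lower bound $c_0 > \limsup_n h_3(n)/\ca(\mathcal{B}_{j_3(n)})$ uses the second half of Theorem \ref{m2}(ii) together with the pointwise bound $R_n \leq \ca(\mathcal{B}_{D_n})$ to rule out $c_0$ collapsing to the diameter-dominated scale. The genuinely hardest step is producing any positive infinitely-often lower bound on $R_n/h_3(n)$ under $B_n$ in the balanced regime $k_n \asymp 1$: this does not follow from the statement of Theorem \ref{m2}(ii), and will likely require reopening the construction used to produce the infinitely-often witnesses in Theorem \ref{m2}(i) or (iii) and adapting it to this intermediate scale. Once $c_0$ is so identified, the stated identity follows with $\delta(\epsilon) = L(\epsilon)/c_0 - 1 \to 0$ as $\epsilon \to 0$ by construction.
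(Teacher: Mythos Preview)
The paper does not give a separate proof of this corollary; it presents it as ``another way to express'' Theorem~\ref{m2}, with the Hewitt--Savage reference built into the statement. Your derivations for cases (i) and (iii) are exactly the intended ones and are correct: the lower bounds come from the infinitely-often clauses of Theorem~\ref{m2}, and the matching upper bounds come respectively from the Dembo--Okada LIL and from the containment $R_n \le \ca(\mathcal{B}_{D_n}) \le \ca(\mathcal{B}_{(1+\epsilon)j_3(n)})$ on $B_n$.

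For case (ii) your overall diagnosis is right---Theorem~\ref{m2}(ii) contains only upper-bound information, so any positive lower bound on $c_0$ must come from reopening a construction---but one sentence is confused. You write that the strict inequality $c_0 > \limsup_n h_3(n)/\ca(\mathcal{B}_{j_3(n)})$ ``uses the second half of Theorem~\ref{m2}(ii) together with the pointwise bound $R_n \le \ca(\mathcal{B}_{D_n})$.'' Both of those are \emph{upper} bounds on $R_n$; no combination of them can produce a lower bound on $c_0$. That strict inequality, like the mere positivity of $c_0$, really does require the construction you mention next. Concretely, the cube-wrapping construction from the proof of~(i) still applies when $k_n \asymp 1$ (one still has $j_3(n) \ll \sqrt{n\log_2 n}$ there), and the probability estimate of Lemma~\ref{lem:bndprobFESG} still gives events of probability at least $\exp[-(1-\eta)\log_2 n]$, so the second Borel--Cantelli lemma fires; what changes is that in the Green's-function computation of Theorem~\ref{thm:lowerbndicap} the cross-segment contribution $\sqrt{n/\log_2 n}\,(\log_3 n)/k_n$ is now of the \emph{same} order as the local contribution $n/h_3(n)$ rather than negligible, so the output is $R_n \ge c\, h_3(n)$ for some explicit $c<1$ rather than $(1-o(1))h_3(n)$. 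The paper does not spell out this adaptation for case~(ii), nor does it explicitly verify the precise strict inequality on $c_0$, so your honest flagging of the gap is on target.
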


    One of our original motivations in engaging in this study was to understand the large deviation behavior of the capacity of the random walk at multiple time points simultaneously. On a heuristic level, to generate large deviation behavior for the capacity of the random walk $S$ at a single time $n$, we have the ability to freely adjust the behavior of the random walk any time before time $n$. However, there is a different story if one wants to attain large deviation behavior at both times $t_1 n $ and $t_2 n$. One could adjust the random walk between times $0$ and $t_1n$ in order to obtain some desired large deviation behavior of $R_{t_1n}$. However, the part of the walk between times  $0$ and $t_1n$ now becomes a constraint that one has to overcome in order to obtain an appropriate large deviation behavior at { time $t_2n$.} Indeed, the same strategy that could have been used before in order to obtain large deviation behavior at time $t_2 n$ originally is no longer a viable strategy when the constraint at time $t_1n$ is added. 
    
    Furthermore, in contrast to the volume of a random walk in high dimensions, the capacity of a random walk is a highly complex formula that correlates over very long ranges of the random walk. This same fact means that many questions that could be readily answered about the volume do not necessarily have a corresponding explicit answer for the capacity. For example, it is not known what is the exact value of the capacity of the cube of side length one \cite{PS51}; thus, the determination of exact constants, as we seek to do here, is a significantly more challenging endeavor. 

    In this paper, our goal is to get a grasp of these questions to understand the multi-point large deviation behavior as well as trying to understand the capacity under constraints. In Theorem \ref{m1}, we establish Strassen's law of the iterated logarithm for both the $\limsup$ and $\liminf$ behavior of the capacity of the random walk. In Theorem \ref{m2}, we establish a result that derives the $\limsup$ behavior for the capacity of the random walk under the condition that we restrict the diameter of the walk. In this case, we observe a phase transition in the behavior of the random walk that attains the $\limsup$. We also aim to identify the optimal way in which a random walk can increase the capacity of its trajectory behind the proof. 

    In the first phase (i) of Corollary \ref{main:cor}, where we restrict the diameter of the walk to be above the critical scale of $\frac{\sqrt{n\log^{(2)} n}}{\log^{(3)} n},$ the $\limsup$  of the random walk is determined by the local behavior. Here, the goal is for the random walk to spread out as far as possible, while still being restricted to having a radius of $j_3(n)$. The random walk will still `look' like it stretches uniformly around the boundary of the sphere; however, one can attempt to interpret this behavior as being a union of a growing number of disjoint strings.

    In contrast, in phase (iii) of Corollary \ref{main:cor}, where we are below the critical scale, the $\limsup$ behavior is determined by the global shape of the random walk. Now, the walk will still wrap uniformly around the boundary of the sphere; the capacity of the walk will also start to match the capacity of the sphere. As such, one can interpret this behavior as the random walk starting to densely pack the boundary of the sphere. 

    Phase (ii) is the regime where the local and global contributions start to compete with each other. One can show that one will not get the disjoint string behavior of phase (i), nor will one get the dense packing of the sphere as in phase (iii). We interpret what occurs here as a sparse packing of the sphere. 

    In particular, the phase (ii) is not intuitively obvious, that is, there is no reason why the intermediate regime should  exist. In general, it is possible for a very sparse random walk to attain the full capacity of the surface it is embedded inside. In the course of our proof, we had to carefully determine the sparsity properties of the random walk while it is embedded inside the sphere, and then conclude that this level of sparsity was enough to cause a macroscopic reduction in the capacity. In particular, this required a very careful analysis of the intersection properties of two random walks and other delicate properties of its distribution.

In the future, we would like to evaluate the capacity of a random walk when it stays in a general convex region, rather than conditioning only on the diameter of the random walk. In the case where the diameter is conditioned, as in this study, symmetry implies that the optimal strategy to maximize capacity is for the local time of the random walk to be roughly uniform within the ball. However, when considering a general region, we expect that the optimal strategy is not uniform, making it difficult to directly extend this result.
We propose the open problem as follows: 
\begin{open}
 Consider some connected domain $W \subset  \RR^3$ and 
 $W_n:=   \Z^3 \cap \lceil j_3(n,1) \rceil W $. 
 Let $B'_n:=\{ S_i \in W_n, 1 \le i\le n \}$. 
 For each $W$, can one choose an appropriate $(a_n)_{n=0}^\infty$ with $a_n \to \infty$ such that almost surely, 
 \begin{align*}
     \limsup_{n\to \infty} a_n^{-1} R_n 1_{B'_n}=1?
 \end{align*}
\end{open}

\begin{open}
By the  Doob-Meyer decomposition, we have
\begin{align*}
R_n = A_n+M_n, 
\end{align*}
where $M_n$ is a Martingale with random walk filtration and $A_n$ is an  increasing predictable process. 
Let $\hat{f}_n(\cdot):=M_{n\cdot}/h_3(n)$. 
Can one choose an appropriate $(a_n)_{n=0}^\infty$ with $a_n \to \infty$ such that almost surely, 
    \begin{align}\label{functional:limit}
        \limsup_{n \to \infty} a_n
        \inf\{ \|\hat{f}_n(\cdot)-h \|_\infty :  h \in \mathcal{S}\}
        =1?
    \end{align}
    In the case of Brownian motion for $d=1$, 
    the question has been studied by a number of authors (cf. \cite{Bo78, Grill92, Tala92}). 
Especially, Gorn and Lifshitz \cite{GL99} expressed \eqref{functional:limit} 
    in terms of the smallest eigenvalue of the Sturm–Liouville problem if $h$ is included in $\mathcal{S}$ and $\int_0^1 (h'(t))^2\text{d}t=1$. 
We conjecture that a related formula appears in the case of the capacity of the random walk in $d=3$.  

In \cite{AdhikariOkada}, in $d=4$, we showed that the rate of moderate deviations for the capacity of the random walk can be expressed in terms of the best constant in the generalized Gagliardo-Nirenberg inequality. This was one surprising link to the field of analysis, and we are interested in finding further links to analysis in the background of this topic. 
\end{open}

Now, we will discuss some of the difficulties present in deriving a Strassen's LIL as well as determining the $\limsup$ behavior under constraints on the diameter.

Strassen's LIL is a result that controls the $\limsup$ or $\liminf$ behavior of the walk at multiple time points simultaneously. As we have mentioned earlier in our discussion of the multi-point large deviation principle, when we only consider the behavior at one time $n$, we have the ability to freely adjust the random walk from times $0$ to $n$ in order to get some desired $\limsup$ or $\liminf$ behavior. If we have to simultaneously derive $\limsup$ or $\liminf$ behavior at multiple points $t_1n, t_2n, \ldots, t_k n$, we have much more limited freedom in adjusting the random walk.

Namely, we are now concerned that the constraints that we impose for the random walk at earlier times will make it far more difficult to obtain some desired $\limsup$ behavior at later times. On a technical level, this manifests itself in the relatively difficult formulas to obtain accurate estimates of the capacity of the random walk. Indeed, for a set of $|S|$ points, one has to determine the capacity of the random walk by solving a system of $|S|$ linear equations. In general, without extra information, the best bounds are of the form,
\begin{equation*}
 \frac{|S|}{\max_{s \in S} \sum_{s' \in S} G(s-s')}\le\ca(S) \le \frac{|S|}{\min_{s \in S} \sum_{s' \in S} G(s-s')}.
\end{equation*}

To obtain bounds that are optimal up to a constant, using a bound of the above form, one would need to have very good control of both the maximum sum of the Green's function around a single vertex as well as the minimum sum of the Green's function entries. Indeed, even once we were able to make a heuristic guess regarding the types of structures that would obtain the Strassen's LIL in Section \ref{sec:limsup}, we have to be very careful in our technical analysis for the sum of the Green's functions, as in Lemmas \ref{thm:LowerboundofGreenspec}  and \ref{thm:LowerboundofGreengen}. In particular, it was essential to derive a lower bound on the sum of the Green's function for nearly every single point of the random walk.  This required a careful analysis and understanding of the geometry in order to ensure that minor effects from the fluctuations of the random walk deteriorate the desired estimates. This is quite in contrast to a proof of Strassen's LIL of the magnitude of the random walk in one dimension; in Strassen's case, he had direct formulas for multipoint distributions of the random walk.

These difficulties reappear when trying to determine the $\limsup$ behavior under constraints as in Theorem \ref{m2}. If all of these constraints were absent from the random walk, then, as shown in the paper \cite{DemboOkada}, the random walk would attempt to straighten out into a line. However, the constraint on the diameter causes the walk to fold in on itself. As before, to obtain estimates on the capacity, one would need to have very strong controls on the Green's function sums. Because the random walk starts to fold in on itself, it is no longer sufficient to only consider the local contribution to the Green's function sums; one must now consider the contribution from the local scale, the global scale, and mesoscopic scales. Indeed, one needs tight controls on the geometry of the random walk along individual `segments' in order to make sure that the mesoscopic contributions do not overpower the contributions of the other scales. This was the object of Theorem \ref{thm:lowerbndicap} and the preceding lemmas. 

We hope that the techniques developed in this paper would generalize to analyzing the capacities of random systems under arbitrary constraints and could reveal a deeper link between the geometry of the capacity of the random walk in $d$-dimensions and the volume of the random walk in $d-2$ dimensions.

\section{Strassen's LIL for the $\limsup$ of a Random Walk Range:Notation and Estimates } \label{sec:limsup}

\subsection{Key proposition}
Our first order of business is to derive the following multi-point estimates for $\limsup$ on the three-dimensional capacity, which yields the proof of $\limsup$ in Theorem \ref{m1} in the next section. 
Let $\{S_n\}$ denote a simple random walk in 3 dimensions, with $\{S^i_n\}$ for $i = 1,2,3$ denoting the coordinates of this random walk. We let $S_{[m,n]}$ denote the entire range of the random walk between times $m$ and $n$ and define $T_A:=\inf\{i \ge 1: S_i\in A \}$ for $A\subset \Z^3$. 
We write $\ca (S_{[m,n]})$ as $R[m,n]$. We set $\mathcal{A}_k=(a_1,\ldots, a_k)$ and $\mathcal{T}_k=(t_1,\ldots, t_k)$ for constants $0<a_1  , a_2 , \ldots, a_k <1$,  and times $0 = t_0 <t_1 \le t_2 \le \ldots \le t_k \le 1$, and fix  some small constant $\tilde{\delta}$. 
We sometimes omit the letter `k' in $\mathcal{A}_k$ and $\mathcal{T}_k$ for typographical reasons. 

Our goal in this section is to estimate the probability of the following event: 

\begin{equation*}
\begin{aligned}
 \mathcal{E}^{\mathcal{A},\tilde{\delta}}_{n,\mathcal{T}} 
 := \bigcap_{i=1}^k \{ a_i(1-\tilde{\delta}) h_3((t_i-t_{i-1}) n) 
\le R_{t_i n} - R_{t_{i-1} n}\le  a_i(1+\tilde{\delta}) h_3((t_i - t_{i-1})n ) \}.
\end{aligned}
\end{equation*}
We let $\phi(x)$ be the function
$$
\phi(x) = \sqrt{\frac{2}{3} x \log^{(2)} x}.
$$
What we will show is that the event $\mathcal{E}^{\mathcal{A},\tilde{\delta}}_{n,\mathcal{T}}$ will be implied by the following event,
\begin{equation}\label{def:F}
\mathcal{F}^{\mathcal{A},\delta}_{n,\mathcal{T}} := \bigcap_{i=1}^k 
\{a_i (1-\delta)\phi(n (t_i- t_{i-1})) \le S^1_{t_{i}n} - S^1_{t_{i-1}n} \le a_i(1+\delta) \phi(n(t_i - t_{i-1})) \},
\end{equation}
where $\delta$ will be chosen sufficiently small relative to $\tilde{\delta}$.


\begin{prop}\label{mainpro1}
For $\tilde{\delta}>0$, there is sufficiently small $\delta>0$ such that for any $k\in N$, $0<a_1  , a_2 , \ldots, a_k <1$, $0<t_1\le t_2\le \ldots \le t_k$, 
\begin{align*}
\mathbb{P}(
\mathcal{E}^{\mathcal{A},\tilde{\delta}}_{n,\mathcal{T}}| 
\mathcal{F}^{\mathcal{A},\delta}_{n,\mathcal{T}})
=1-o(1).
\end{align*}
\end{prop}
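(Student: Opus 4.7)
\emph{Proof plan.} The argument reduces the multi-segment claim to a single-segment estimate via the Markov property, and tackles the single segment through two-sided Green's-function sum bounds.

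First, by the strong Markov property applied at the times $t_1 n,\ldots,t_{k-1}n$, the walk splits into $k$ independent pieces, and the conditioning event $\mathcal{F}^{\mathcal{A},\delta}_{n,\mathcal{T}}$ factors as a product of $k$ independent marginal events, one per segment. Since each target displacement $a_i\phi((t_i-t_{i-1})n)\asymp \sqrt{n\log_2 n}$ vastly exceeds the $\sqrt{n}$ transverse fluctuations, the segment ranges are spatially well-separated along the $x$-axis, and transience of the $3$-dimensional walk makes the interaction term in $\ca(\bigcup_{j\le i} A_j)-\ca(\bigcup_{j< i}A_j)-\ca(A_i)$ smaller by a factor of $1/\log_3 n$ than a single segment capacity. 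Hence $R_{t_i n}-R_{t_{i-1}n}=(1+o(1))\ca(A_i)$, and the multi-segment claim reduces to the following single-segment statement: for $m:=(t_i-t_{i-1})n$ and $a:=a_i$, conditional on $S^1_m\in a\phi(m)[1-\delta,1+\delta]$, one has $\ca(A_i)=a(1\pm\tilde\delta)h_3(m)$ with probability $1-o(1)$.

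For a single segment, I would use the elementary two-sided bracket
\[
\frac{|A|}{\max_{x\in A}\sum_{y\in A}G(x-y)}\le \ca(A)\le \frac{|A|}{\min_{x\in A}\sum_{y\in A}G(x-y)}
\]
together with the asymptotic $|A|=(c_G+o(1))m$ from the transience of the $3$-dimensional walk. The conditioning on the extremal $x$-displacement forces $S^1$ to progress essentially monotonically at the Strassen rate $a\phi(m)$, while the $y,z$ coordinates remain free diffusions of width $\sqrt{m}$; thus $A$ approximates a tube of length $L=a\phi(m)$ and transverse radius $r\asymp\sqrt{m}$ with linear density $m/L$. For a typical interior $x\in A$, the short-range part of $\sum_y G(x-y)$ is $O(\sqrt m)$ and the long-range part is $(m/L)\int_r^L s^{-1}\,ds\asymp (m/L)\log(L/r)$. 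Since $L/r\asymp\sqrt{\log_2 m}$, one obtains $\sum_y G(x-y)\asymp (m\log_3 m)/L$, whence $\ca(A)\asymp L/\log_3 m= a\phi(m)/\log_3 m\asymp a h_3(m)$, with the sharp constant captured by the specific choice of normalization in $h_3$.

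The main technical difficulty, as emphasized in the introduction, is to upgrade this tube heuristic to honest two-sided estimates of the \emph{maximum} and \emph{minimum} of $\sum_y G(x-y)$ over $x\in A$ with a leading constant matching $a h_3(m)$ to within a factor $(1\pm\tilde\delta)$. Controlling the max requires ruling out accidental clustering and relies on a Kesten-type deviation bound for the $1$-dimensional local time of $S^1$ under the conditioning, combined with the independence of the transverse coordinates. Controlling the min must exclude both endpoint boundary effects and transverse ``holes'' in the tube, and is exactly the content of Lemmas~\ref{thm:LowerboundofGreenspec} and~\ref{thm:LowerboundofGreengen} foreshadowed in the introduction. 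Once the single-segment claim is established, independence of the segments under $\mathcal{F}^{\mathcal{A},\delta}_{n,\mathcal{T}}$ together with a union bound over $i\le k$ conclude the proof.
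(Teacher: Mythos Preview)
Your upper bound half matches the paper's approach closely: the paper also uses subadditivity $R_{t_i n}-R_{t_{i-1}n}\le R[t_{i-1}n+1,t_i n]$ to decouple into independent segments, then establishes the single-segment bound via the lower bound on $\min_x\sum_y G$ in Lemmas \ref{thm:LowerboundofGreenspec} and \ref{thm:LowerboundofGreengen} (together with the ``split off the boundary piece $W$'' device of Proposition \ref{prop:upperboundcap}, which you allude to under ``endpoint boundary effects'').

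For the \emph{lower} bound you diverge from the paper. You propose to first prove the decoupling $R_{t_i n}-R_{t_{i-1}n}=(1+o(1))\ca(A_i)$ and then bound each $\ca(A_i)$ from below via $|A_i|/\max_x\sum_y G$. The paper does \emph{not} pass through $\ca(A_i)$; instead it works directly on the full walk using the last-exit identity $|A|=\sum_{x,y\in A}G(x-y)\mathbb{P}^y(T_A=\infty)$, restricted to the ``good'' set $\Lambda_{1,i}$ of times with controlled within-segment Green's sums (from \cite[Lemma 3.1]{DemboOkada}) and bounding cross-segment sums by $C\sqrt{\log_2 n}/\sqrt{n}$. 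This yields $\sum_{j\in\tilde\Lambda_{1,i}}\mathbb{P}^{S_j}(\hat T_{S_\Lambda}=\infty)\ge (1-\epsilon)a_{i+1}h_3((t_{i+1}-t_i)n)$ directly, and then subtracts the already-established upper bound on $R_{t_i n}$ to get the increment lower bound.

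Your route is viable in principle---the $1/\log_3 n$ interaction scale you identify is correct---but two steps are handwaved. First, ``interaction small'' does not immediately give $R_{t_i n}-R_{t_{i-1}n}\ge(1-o(1))\ca(A_i)$; you would need something like Lemma \ref{cap1} together with a uniform bound $\mathbb{P}^x(T_{\cup_{j<i}A_j}<\infty)=O(1/\log_3 n)$ for $x\in A_i$. Second, the bracket $\ca(A_i)\ge|A_i|/\max_x\sum_y G$ fails as written because the max can be ruined by a single bad point; the ``Kesten-type'' argument you gesture at would in practice collapse to restricting to a good subset, which is exactly the $\Lambda_{1,i}$ device the paper uses. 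The paper's direct multi-segment argument avoids both of these intermediate statements and is cleaner, since it never needs the sharp single-segment lower bound on $\ca(A_i)$ in isolation.
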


In what proceeds, we will prove the upper and lower bounds of Proposition \ref{mainpro1} in the following two sections.

\subsection{Deriving Upper Bounds on the Capacity in Proposition \ref{mainpro1}}

In this subsection, we show the upper bound for the capacity of the random walk on the event $\mathcal{F}^{\mathcal{A},\delta}_{n,\mathcal{T}}$ in order to prove Proposition \ref{mainpro1}. To obtain this bound, our goal in this section is to show Proposition \ref{prop:upperboundcap}. 

 \begin{prop} \label{prop:upperboundcap}
    
Conditioned on  the event 
$\Lambda_{a,\delta}:= \{ a \phi(n) \le S_n^1 \le a(1+ \delta) \phi(n) \}$, we have with probability  $1- o(1)$ that,
\begin{equation*}
R_n \le a (1+ \tilde{\delta}) h_3(n) ,
\end{equation*}
where $\tilde{\delta}$ can be any constant strictly greater than $\delta$. 

\end{prop}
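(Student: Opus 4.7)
The plan is to combine the subadditivity of capacity with the standard upper bound
\begin{equation*}
\ca(S) \le \frac{|S|}{\min_{s \in S} \sum_{s' \in S} G(s'-s)}
\end{equation*}
applied to the bulk of the range, while discarding the two ends of the walk using the trivial bound $\ca(A) \le |A|$ and subadditivity of capacity. More precisely, fix a small $\chi > 0$ and split the time interval $[1,n]$ into a bulk $[n/(\log_2 n)^\chi, n - n/(\log_2 n)^\chi]$ and two tails. Since $\ca$ is subadditive (cf.\ \cite{LA91}), it suffices to show that the capacity of the bulk range is bounded by $a(1+\tilde{\delta}') h_3(n)$ for some $\tilde{\delta}'<\tilde{\delta}$, because each tail has cardinality $O(n/(\log_2 n)^\chi)$, which is of smaller order than $h_3(n)$.

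To bound the bulk, I want to apply the lower bound
\begin{equation*}
\sum_{i \in [1,n]} G(S_i - S_{j+y}) \ge 2\,\frac{(3+o(1))\,n \log_3 n}{2\pi a \phi(n)} \cdot \frac{1-\chi}{\tfrac{1+\delta}{1-\kappa} + \tfrac{2\theta}{a}}
\end{equation*}
from Lemmas \ref{thm:LowerboundofGreenspec} and \ref{thm:LowerboundofGreengen} uniformly over every bulk time $j+y$. For this I dyadically/mesoscopically partition the bulk into blocks of length $n/(\log_2 n)^{3/2}$ and argue that in each such block there is at least one time $j$ at which the event $\mathcal{E}_j$ from \eqref{Ej} holds; Corollaries \ref{cor:Inmiddle} and \ref{cor:Bmiddle} guarantee $\mathbb{P}(\mathcal{E}_j^c \mid \Lambda_{a,\delta}) = \zeta_n$ for each individual $j$, and a union bound using the doubly-exponential smallness of $\zeta_n$ shows that, with probability $1-o(1)$ conditional on $\Lambda_{a,\delta}$, every block contains such a ``good anchor'' $j$. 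Combined with the event $\bigcap_i \Omega_i^c$ from Lemma \ref{thm:LowerboundofGreengen}, every bulk point is within $n/(\log_2 n)^{3/2}$ of a good anchor, so the Green's-function lower bound applies at every bulk point.

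Putting these ingredients together, on this high-probability event,
\begin{equation*}
\ca(S_{[1,n]_{\text{bulk}}}) \le \frac{n}{\min_{j+y \in \text{bulk}} \sum_i G(S_i - S_{j+y})} \le \frac{\pi a \phi(n)}{3 \log_3 n} \cdot \frac{\tfrac{1+\delta}{1-\kappa} + \tfrac{2\theta}{a}}{1-\chi} (1+o(1)).
\end{equation*}
The arithmetic identity $\phi(n) / \log_3 n = (3/\pi)\, h_3(n)$ (which one checks by comparing $\sqrt{2/3}/\sqrt{6} = 1/3$ to the coefficient $\sqrt{6}\pi/9$ in the definition of $h_3$) reduces this to $a h_3(n) \cdot \frac{(1+\delta)/(1-\kappa) + 2\theta/a}{1-\chi}(1+o(1))$. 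Since $\delta < \tilde{\delta}$ is fixed, one can choose $\kappa, \theta, \chi$ sufficiently small (depending only on $a$ and the difference $\tilde{\delta} - \delta$) so that this last factor is bounded by $1+\tilde{\delta}$, which yields the proposition after absorbing the $o(1)$ into $\tilde{\delta}$.

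The main obstacle is the uniformity in the Green's-function lower bound: the standard inequality $\ca(S) \le |S|/\min \sum G$ is only useful if the minimum is controlled at \emph{every} point of $S$, so one cannot afford to exclude even a positive-density subset of ``bad'' times. The resolution is precisely the two-scale argument built into Lemma \ref{thm:LowerboundofGreengen}: one obtains the lower bound rigorously at a sparse set of anchor times via $\mathcal{E}_j$, then transports it to all nearby times using the mesoscopic regularity event $\bigcap_i \Omega_i^c$. Keeping track of the parameters so that the final multiplicative constant collapses exactly to $1+\tilde{\delta}$ is technical but mechanical once the geometric picture is in place.
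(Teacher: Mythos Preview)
Your overall architecture matches the paper's proof closely: split off two short tails, control the Green's-function sum uniformly on the bulk via the anchor-and-transport mechanism of Lemmas \ref{thm:LowerboundofGreenspec} and \ref{thm:LowerboundofGreengen}, and then invert. The arithmetic identity $\phi(n)/\log_3 n=(3/\pi)h_3(n)$ is correct, and the choice of auxiliary parameters $\kappa,\theta,\chi$ at the end is exactly what the paper does.

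There is, however, a genuine gap in your treatment of the tails. You bound $\ca(W)$ by the trivial inequality $\ca(A)\le |A|$, giving $O(n/(\log_2 n)^\chi)$, and then assert this is $o(h_3(n))$. That is false: $h_3(n)\asymp \sqrt{n\log_2 n}/\log_3 n$ is only of order $\sqrt{n}$ up to iterated logs, whereas $n/(\log_2 n)^\chi$ is essentially of order $n$. So your tail bound overwhelms the bulk term and the argument collapses. The paper avoids this by invoking the large-deviation estimate for the capacity of a short random-walk segment (\cite[Lemmas~2.1 and~3.4]{DemboOkada}): the capacity of $S[1,n/(\log_2 n)^\chi]$ exceeds $(1+\delta)^3 h_3(n/(\log_2 n)^\chi)\asymp h_3(n)/(\log_2 n)^{\chi/2}$ only with probability $\lesssim \exp[-(1+\delta)\log_2 n]$, which is much smaller than $\mathbb{P}(\Lambda_{a,\delta})\asymp \exp[-a^2\log_2 n]$ since $a<1$. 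Hence the tail capacity is $o(h_3(n))$ with conditional probability $1-o(1)$. You need this probabilistic input; the cardinality bound is far too crude.

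Two smaller remarks. First, $\zeta_n=O(\exp[-(\log_3 n)^{3/2-\epsilon}])$ is not ``doubly-exponential''; it is barely sub-polylogarithmic. Your union bound over the $(\log_2 n)^{3/2}$ blocks still works because $(\log_2 n)^{3/2}\zeta_n\to 0$, but the paper instead uses Markov's inequality on the total count of bad times. Second, the paper does not use subadditivity plus the $|S|/\min$ bound separately; it uses \cite[Lemma~2.4]{DemboOkada}, which directly gives $R_n\le \ca(W)+n/\min_{x\in V}\sum_{l=1}^n G(S_l-x)$ with the Green's sum running over the \emph{full} walk. Your version with the sum restricted to the bulk is equivalent up to lower-order terms, so this is not an error, just a slight deviation in packaging.
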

After finishing Proposition \ref{prop:upperboundcap}, we can easily obtain Proposition \ref{mainpro1} as follows: 

 \begin{proof}[Proof of Proposition \ref{mainpro1} given Proposition \ref{prop:upperboundcap}]
We first begin with the remark that the event,
\begin{equation}\label{firstrn}
\{R_{t_{i+1}n} - R_{t_i n} \le a_{i+1}( 1+ \tilde{\delta}) h_3((t_{i+1} - t_i)n)\},
\end{equation}
would be entailed by the event,
\begin{equation*}
\{R[t_in+1, t_{i+1}n] \le a_{i+1}(1 + \tilde{\delta}) h_3((t_{i+1} -t_i)n)\},
\end{equation*}
since $R_{t_{i+1}n} - R_{t_i n} \le R[t_in+1, t_{i+1}n]$, as derived from the subadditivity of the capacity (cf. \cite{LA91}). 
Hence, to show the upper bound of the event \eqref{firstrn}, we should consider the new collection of events
\begin{equation*}
\mathcal{G}^{\mathcal{A},\tilde{\delta}}_{n,\mathcal{T}}:= \bigcap_{i=0}^{k-1} \left\{R[t_in+1, t_{i+1}n] \le a_{i+1}(1+\tilde{\delta}) h_3((t_{i+1} -t_i)n) \right\}.
\end{equation*}
If we show that,
\begin{equation*}
\mathbb{P}(\mathcal{G}^{\mathcal{A},\tilde{\delta}}_{n,\mathcal{T}}| \mathcal{F}^{\mathcal{A},\delta}_{n,\mathcal{T}}) = 1- o(1),
\end{equation*}
then we have
\begin{equation*}
\mathbb{P}\bigg(\bigcap_{i=0}^{k-1}\{R_{t_{i+1}n} - R_{t_i n} \le a_{i+1}(1+\tilde{\delta}) h_3((t_{i+1} - t_i)n)\} | \mathcal{F}^{\mathcal{A},\delta}_{n,\mathcal{T}}\bigg) = 1-o(1).
\end{equation*}
Now, all the events $\{R[t_in+1, t_{i+1}n] \le a_{i+1}(1+ \tilde{\delta}) h_3((t_{i+1} -t_i)n)\}$ deal with different segments of the random walk and are independent of each other. 
Hence, we have the upper bound for the capacity, as is necessary. 
\end{proof}


To reach Proposition \ref{prop:upperboundcap}, the argument proceeds through several layers of auxiliary results. Lemmas \ref{est:lem1} and \ref{lem:TransverseFluc} first give precise probabilistic bounds showing that the random walk stays close to a narrow tube around its main direction and that lateral deviations are small. These lemmas lead to Corollaries \ref{cor:Inmiddle} and \ref{cor:Bmiddle}, which ensure that such controlled behavior holds uniformly along the path. Lemma \ref{thm:LowerboundofGreenspec} then uses this structure to obtain a quantitative lower bound for the sum of Green’s functions, capturing how different parts of the walk interact. Lemma \ref{thm:LowerboundofGreengen} extends this estimate to nearby segments, and together these results make it possible to conclude the upper bound on the capacity stated in Proposition \ref{prop:upperboundcap}.


We now begin to prove Lemma \ref{lem:TransverseFluc}. 
The following definition introduces notions that will allow us to bound the projections of the random walk below and above a specific line.
\begin{dfn}

Given some constants $a,\delta, \kappa$,  we define the line,
\begin{equation*}
U(l):=l \frac{a (1+\delta) \phi(n) }{(1-\kappa) n}, \quad 
L(l):= l \frac{a \phi(n) }{(1 +\kappa) n}.
\end{equation*}

For $r(n):= \frac{n (\log^{(3)} n)^{3/2}}{\log^{(2)} n }$, we define the events,
\begin{equation*}
\begin{aligned}
& U^{P,k}_a:= \bigcap_{ n - k \ge l \ge r(n)} \{  S^1_{k+l}-  S^1_{k} \le U(l) \} ,\quad
 L^{P,k}_a := 
\bigcap_{n -k  \ge l \ge r(n) } \{  S^1_{ l+k} -S^{1}_{k} \ge  L(l)\} ,\\
& U^{N,k}_a
:=\bigcap_{-k \le l \le -r(n)} \{  S^1_{l+k} -S^1_{k} \le  L(l)\}, \quad
L^{N,k}_a:= \bigcap_{  -k \le l \le -r(n)} \{  S^1_{l+k}  -S^1_k \ge  U(l)\} .
\end{aligned}
\end{equation*}
\end{dfn}

\begin{rem} In what proceeds, we will choose constants that $\delta, \kappa$ are sufficiently small (at least less than 1) and $\kappa \ge \delta$. With this choice of parameters, we will eventually get a later cutoff choice
$$
 n C(\delta,\kappa) \le \frac{3}{4}n, 
 \quad C(\delta,\kappa):= \frac{\delta + \kappa + \delta \kappa}{\delta + 2 \kappa + \delta \kappa} (1-\kappa).
$$
Furthermore, we have that $U(l) \le a \phi(n)$ for $l \le \frac{3n}{4} $. 
\end{rem}

What one should imagine in this picture is that $S^1_n - S^1_0$ has a slope of approximately $\frac{a \phi(n)}{n}$. Then, we can bound the fluctuation of $S$ (at least for times sufficiently far from $0$) by lines going through $S^1_0$ whose slopes slightly differ from $\frac{a \phi(n)}{n}$. Furthermore, these approximations should hold even if we consider the walk $S^1$ around some intermediate point, $S_k$; namely, we would have that $S^1_{m+k} - S^1_{k} \approx \frac{m}{n} [S^1_n -S^1_0]$.

\begin{lem}\label{est:lem1}
Fix some constant $a>0$ and some small parameters $\delta>0$ and $\kappa >0$.
 Then, for $\zeta_n = O( \exp[- (\log^{(3)} n)^{3/2-\epsilon}])$ and $\epsilon >0$, we have that,
\begin{equation} \label{eq:lowprobestimate}
\mathbb{P} \left( U^{P,0}_a \cap L^{P,0}_a|\Lambda_{a,\delta}\right)  = 1- \zeta_n.
\end{equation}
\end{lem}

\begin{proof}

The proof of $P(L^{P,0}_a|\Lambda_{a,\delta})=1-\zeta_n$  is very similar to the proof of \cite[Lemma  3.2]{DemboOkada}. First of all, the event $L^{P,0}_a$ is the event that was exactly dealt with in said Lemma.  The only point we must acknowledge is that, by the Local Central Limit Theorem, (see \cite[Theorem 1.2.1]{LA91}), 
\begin{equation*}
\mathbb{P}(S^1_n \ge a(1+\delta) \phi(n)) \le C\exp[- (3/2) a^2 \delta \log^{(2)} n] \mathbb{P}(S^1_n \ge a \phi(n))  \ll \exp[- (\log^{(3)} n)^{3/2-\epsilon}] \mathbb{P}(S^1_n \ge a \phi(n)). 
\end{equation*}
Thus we have $P(L^{P,0}_a |\Lambda_{a,\delta})=1-\zeta_n$. 

Next, we will show $P(U^{P,0}_a |\Lambda_{a,\delta})=1-\zeta_n$.
The key idea in the proof of the lower bound is that any time the walk $S^1_l$ goes below the lower bound $L(l)$, it must be followed by a rare increase from $S^1_l$ to $S_n^1$. The argument then follows from a stopping time argument based on the location of the crossing below $L(l)$, and all of these locations can be treated uniformly. In the analysis for the upper bound, we have to distinguish cases based on whether $S^1_l$ first exceeds $U(l)$ when $l$ is relatively close to $n$ or relatively far away from $n$. In what follows, $\textbf{Case 1}$ will first treat the case where $l$ is relatively close to $n$. This follows from a symmetry argument from the estimate on $P(L^{P,0}_a|\Lambda_{a,\delta})$. \textbf{Case 2} treats those points extremely close to $n$ that do not follow from the symmetry argument. \textbf{Case 3} will treat points where $l$ is close to $0$ and uses a stopping time argument similar to that appearing in the proof of the lower bound.   

In what follows, we will assume that $\kappa \ge \delta$ and $\kappa,\delta$ are chosen to be sufficiently small. In particular, the threshold that appears in $\textbf{Case 1}$, will satisfy  $C(\delta,\kappa)n \le \frac{3}{4}n$. Furthermore, we will also have that $U(l) < \frac{5}{6}a \phi(n)$ for $l \le \frac{3n}{4}$. 

 \textbf{Case 1: $ n - r(n)  \ge l \ge  C(\delta, \kappa)n $ }

In this case, we can use the estimate for  $L^{P,0}_a$ combined with a symmetry argument. Namely, by considering the random walk, a lower bound event around the point $S_0$ becomes an upper bound event around the point $S_n$. 
Consider the following transformation: if $(s_1,s_2,\ldots,s_n)$ is a sequence of increments of the random walk such that the event $L^{P,0}_a$ holds, then the event $U^{N,n}_a$ holds for the reverse walk with increments $(s_n,s_{n-1},\ldots,s_2,s_1)$. 
 Indeed, we can now say that $S^1_n - S^1_{l} \ge L(n-l)$ for the reversed walk.  Thus, we have
$$
S^1_l \le S^1_n - L(n-l) \le a\phi(n)(1+ \delta) - \frac{a \phi(n) (n- l)}{n(1+ \kappa)}.
$$
Notice, that above we used that $S^1_n$ must be bounded by $a\phi(n) (1+\delta)$ on the event $\Lambda_{a,\delta}$. 

If we then use our assumption on the value of $l$, we notice that the difference above is less than $U(l)$ by direct computation. This will finish the proof of the upper bound for this range of $l$.


\textbf{Case 2}: $|l- n| \le r(n)$:

 If $S^1_l$ is greater than $U(l)$ for at least one value of $l$ in the regime $|l - n| \le r(n)$, then the reversed random walk $S^1_{n-1}- S^1_n, S^1_{n-2}-S^1_n, \ldots ,S^1_{n-r(n)} - S^1_n$, must attain a maximum value greater than,
 \begin{equation*}
\frac{a(1+ \delta) (n- r(n) )\phi(n)}{n(1- \kappa)} - a(1+\delta) \phi(n),
 \end{equation*}
 for some $S^1_{n-l} - S^1_n$ with $ l \le r(n)$. 
 Indeed, we must have $S^1_n \le a(1+\delta) \phi(n)$. Furthermore, if there exists $S^1_l \ge U(l)$ for $|l -n| \le r(n)$, then for this $l$ we must have $S^1_l \ge U(l) \ge U( n -r(n)). $ Then, we would also have $S^1_l - S^1_n \ge U(n -r(n)) - a(1+\delta) \phi(n) = \frac{a(1+\delta) (n- r(n)) \phi(n)}{n(1-\kappa)} - a(1+\delta)\phi(n)$. 
 For $n$ large, the difference above is of order $\phi(n)$, and the reflection principle for random walks shows that the chance that such a fluctuation could occur is bounded from above by 
 $$
 O\left( \exp \left[ - \frac{C' (\phi(n))^2 }{r(n)} \right] \right) = O\left(\exp\bigg[- C'  \frac{(\log^{(2)} n)^2}{(\log^{(3)} n)^{3/2}}\bigg]\right),
 $$
 where $C'$ is some constant that depends on $\delta$ and $\kappa$.
 
 This is significantly smaller than  $\exp[-(\log^{(3)} n)^{3/2-\epsilon}] \mathbb{P}(S^1_n \ge a \phi(n)).$

\textbf{Case 3:} $l \le   C(\delta,\kappa) n. $

This final case will allow us to finish the proof that $\mathbb{P} ((U^{P,0}_a )^c|\Lambda_{a,\delta})=\zeta_n$. 
We introduce the stopping time $\tau:= \min \left\{  l \ge r(n) : S^1_l \ge U(l)\right\}.$ 
We consider the interval $J_j := \left( \frac{jn}{\log^{(2)} n}, \frac{(j+1)n}{\log^{(2)} n} \right].$ 
Note that 
$$
\begin{aligned}
\mathbb{P}((U^{P,0}_a )^c \cap \Lambda_{a,\delta})
&\le \sum_{j\le (\log ^{(2)} n) C(\delta,\kappa)}  \mathbb{P}(\{\tau\in J_j \}\cap \Lambda_{a,\delta}) 
+ \mathbb{P}\left(\left\{\tau= r(n) \right\}\cap \Lambda_{a,\delta}\right) \\&+ 
\mathbb{P}\left( \{\tau \ge n C(\delta,\kappa)\} \cap \Lambda_{a,\delta}\right).
\end{aligned}
$$
By \textbf{Cases 1 and 2}, the second line can be shown to be bounded by $\exp[-(\log^{(3)} n)^{3/2- \epsilon} ] \mathbb{P}(\Lambda_{a,\delta})$. In what proceeds, we will bound the terms in the first line above. 

We can write the first event as,
\begin{equation} \label{eq:splitJj}
\begin{aligned}
\mathbb{P}(\{\tau \in J_j\}\cap  \Lambda_{a,\delta})& = \sum_{k \in J_j} \mathbb{P}(a \phi(n) \le S_n^1 \le a(1+ \delta) \phi(n)| \tau= k) \mathbb{P}(\tau = k) \\
& \le \max_{k \in J_j}\mathbb{P}(S^1_n - S^1_k \ge a \phi(n) - U(k)) \sum_{k \in J_j} \mathbb{P}(\tau = k).
\end{aligned}
\end{equation}

Note that in the above equation, we used that $U(k) \le \frac{5}{6} a \phi(n)$  for $k \le \frac{3}{4}n$. Thus, when $S^1_k$ exceeds $U(k)$, $S^1_n$ still has to increase to attain the value $a\phi(n)$ or greater. 
Now, notice that for any $D>0$,
$$\sum_{k \in J_j}\mathbb{P}(\tau = k) \le \mathbb{P}\left(S^1_{ \frac{nj}{\log^{(2)} n}} \ge U\left(\frac{nj}{\log^{(2)} n}\right) - D\right) + \mathbb{P}\left(\sup_{k \in J_j} \left|S^1_k - S^1_{\frac{nj}{\log^{(2)} n}}\right| \ge D\right).$$

In other words, if the stopping time $\tau$ occurs in the interval $J_j$, then it either must be the case that $S^1_k$ is large at the beginning of the interval (corresponding to the first term), or $S^1$ has a large fluctuation inside the interval $J_j$ (corresponding to the second term). 
We can choose $D = \sqrt{\frac{nj \log^{(3)} n}{\log^{(2)} n}}$ in order to ensure both that $D \ll U\left(\frac{nj}{\log^{(2)} n}\right)$ and that,
$$
\mathbb{P}\left(\sup_{k \in J_j} \left|S^1_k - S^1_{\frac{nj}{\log^{(2)} n}}\right| \ge D\right) \le \mathbb{P}\left(S^1_{ \frac{nj}{\log^{(2)} n}} \ge U\left(\frac{nj}{\log^{(2)} n}\right) - D\right).
$$

Indeed, note that $$
U\left( \frac{nj}{\log^{(2)}n}\right) = \frac{(1+\delta) a\phi(n)}{n(1-\kappa)} \frac{nj}{\log^{(2)}n} = O\left( \frac{\sqrt{n} j}{ \sqrt{\log^{(2)}}n} \right) \gg \sqrt{\frac{n j \log^{(3)}n}{\log^{(2)}n}},
$$
since we assume $j \ge ( \log^{(3)} n)^{3/2}$.
This shows that $D \ll U\left( \frac{nj}{\log^{(2)}n} \right).$

Now, by using the reflection principle, we see that,
\begin{equation}
\mathbb{P}\left( \sup_{k \in J_j}\left| S^1_k - S^1_{ \frac{nj}{\log^{(2)}n}}\right| \ge D \right) \le 2 \exp\left[ - \frac{D^2}{ \frac{n}{\log^{(2)}n}}\right] \le 2 \exp\left[- j \log^{(3)} n\right]. 
\end{equation}

Furthermore, we have,
\begin{equation}
\mathbb{P}\left(S^1_{ \frac{nj}{\log^{(2)} n}} \ge U\left(\frac{nj}{\log^{(2)} n}\right) - D\right) \asymp \exp \left[ (1- o(1))^2 \frac{(1 +\delta)^2}{(1-\kappa)^2} \frac{a^2\phi(n)^2}{ \frac{2}{3}n} \frac{j}{\log^{(2)}n} \right] = \exp[ - O(j)].
\end{equation}
This will give us for large enough $n$ that,
$$
\mathbb{P}\left(\sup_{k \in J_j} \left|S^1_k - S^1_{\frac{nj}{\log^{(2)} n}}\right| \ge D\right) \le \mathbb{P}\left(S^1_{ \frac{nj}{\log^{(2)} n}} \ge U\left(\frac{nj}{\log^{(2)} n}\right) - D\right).
$$

Now, using this, we return to equation \eqref{eq:splitJj} and observe that,
$$
\mathbb{P}(\{\tau \in J_j\}\cap  \Lambda_{a,\delta}) \le 2 \mathbb{P}\left(S^1_n - S^1_{\frac{(j+1) n}{\log^{(2)}n }} = a \phi(n) - U\left( \frac{(j+1) n}{\log^{(2)}n} \right)\right) \mathbb{P}\left(S^1_{ \frac{nj}{\log^{(2)} n}} \ge (1- o(1)) U\left(\frac{nj}{\log^{(2)} n}\right) \right) .
$$
To obtain an estimate on the quantity on the right-hand side above, we can use estimates obtained by the local central limit theorem.

Observe that the right-hand side above can be bounded by,
\begin{equation}
\begin{aligned}
&\exp\left[ - \frac{(a\phi(n))^2\left[ 1-  \frac{1+\delta}{1-\kappa}\frac{(j+1)}{\log^{(2)}n} \right]^2 }{ \frac{2n}{3} \left[ 1- \frac{j+1}{\log^{(2)}n}\right]}\right] \exp\left[ -\frac{(a \phi(n))^2}{\frac{2n}{3}} (1- o(1))^2 \frac{(1+\delta)^2}{(1- \kappa)^2}  \frac{j}{\log^{(2)}n}\right]\\
& =\exp\left[ - \frac{(a\phi(n))^2\left[ 1-  \frac{1+\delta}{1-\kappa}\frac{(j+1)}{\log^{(2)}n} \right]^2 }{ \frac{2n}{3} \left[ 1- \frac{j+1}{\log^{(2)}n}\right]}\right] \exp\left[ -\frac{(a \phi(n))^2}{\frac{2n}{3}} \frac{(1+\delta)^2}{(1- \kappa)^2}  \frac{j+1}{\log^{(2)}n}\right] \exp\left[ o(1) \frac{a^2 \phi(n)^2}{\frac{2n}{3}} \frac{j}{\log^{(2)}n} \right]\\
& =\exp\left[- \frac{(a \phi(n))^2}{\frac{2n}{3}} - \frac{(a\phi(n))^2}{\frac{2n}{3}}\left(  1- \frac{1+\delta}{1-\kappa} \right)^2  \frac{ \frac{(j+1)}{\log^{(2)} n}}{1 - \frac{j+1}{\log^{(2)}n}}  + o(1) \frac{(a \phi(n))^2}{ \frac{2n}{3}} \frac{(j+1)}{\log^{(2)}n }\right].
\end{aligned}
\end{equation}
For $j \ge (\log ^{(3)}n)^{3/2}$, we see that the quantity above will be less than $\exp\left[ -\frac{(a \phi(n))^2}{\frac{2n}{3}} - C (\log^{(3)} n)^{3/2}\right]$. Meanwhile, a lower bound for $P(\Lambda_{a,\delta})$ is of the form,
$$
\sum_{j =0}^{\sqrt{\frac{n}{\log^{(2)} n}}} \frac{1}{\sqrt{2 \pi \frac{2n}{3}}} \exp\bigg[ - \frac{(a \phi(n) + j)^2}{\frac{2n}{3}}\bigg] \ge C_1 \exp\left[ - \frac{(a\phi(n))^2}{\frac{2n}{3}} -  C_2\ a\log^{(3)}n  \right],
$$
where $C_1$  and $C_2$ are universal constants.

Thus, we see that $\mathbb{P}(\{ \tau \in J_j \} \cap \Lambda_{a,\delta}) \le C\exp[-(\log^{(3)} n)^{3/2- \epsilon } ] \mathbb{P}(\Lambda_{a,\delta})$. Furthermore, we have at most $\log^{(2)} n $ choices of $J_j$. Applying a union bound would only increase our error estimate by a factor of $\log ^{(2)} n$, which can be absorbed into $\exp[-(\log ^{(3)} n)^{3/2 - \epsilon}]$.

Thus, we have that,
$$
 \sum_{j\le (\log ^{(2)} n) C(\delta,\kappa)}  \mathbb{P}(\{\tau\in J_j \}\cap \Lambda_{a,\delta})  \le \mathbb{P}(\Lambda_{a,\delta}) \exp[-(\log^{(3)} n)^{3/2 - \epsilon}].
$$

The remaining term we have to consider is $\mathbb{P}(\{\tau=r(n)\} \cap \Lambda_{a,\delta})$. In this instance, we separate the walks depending on the value of $S^1_{r(n)}$. We let $I_j:[ \frac{a \phi(n) }{n} \frac{j n}{\log^{(2)} n}, \frac{a \phi(n) }{n} \frac{(j+1) n}{\log^{(2)} n}]$. The probability of the event that at time $\tau=r(n)$, $S^1_\tau$ is at the position $I_j$ is bounded by,
\begin{equation*}
\mathbb{P}\left(S^1_{r(n)} \ge \frac{a \phi(n) }{n} \frac{j n}{\log^{(2)} n}  \right) \mathbb{P}\left( S^1_n - S^1_{r(n)} \ge a \phi(n) - \frac{a \phi(n) }{n} \frac{(j+1) n}{\log^{(2)} n} \right).
\end{equation*}

We thus have that,
\begin{equation*}
\begin{aligned}
&\mathbb{P}\left( \{\tau = r(n)\}  \cap \Lambda_{a,\delta} \right) \le \mathbb{P}\left(S^1_{r(n)} \ge a \phi(n)  \right)  \\&+ \sum_{j = (1- \kappa)^{-1} (\log^{(3)} n)^{3/2}}^{\log^{(2)} n} \mathbb{P}\left(S^1_{r(n)} \ge \frac{a \phi(n) }{n} \frac{j n}{\log^{(2)} n}  \right) \mathbb{P}\left( S^1_n - S^1_{r(n)} \ge a \phi(n) - \frac{a \phi(n) }{n} \frac{(j+1) n}{\log^{(2)} n} \right).
\end{aligned}
\end{equation*}

These estimates are also obtained by the Local Central Limit Theorem, as was done in treating the case $\tau \in J_j$. Thus, we see that $\mathbb{P}\left( \{\tau = r(n)\}  \cap \Lambda_{a,\delta} \right) \le \mathbb{P}(\Lambda_{a,\delta}) \exp[- (\log^{(3)} n)^{3/2 - \epsilon}]$. Again, we can now combine all our estimates to finally derive that $\mathbb{P}( (U^{P,0}_{a})^c \cap \Lambda_{a,\delta}) \le \exp[- (\log^{(3)} n)^{3/2 - \epsilon}] \mathbb{P}(\Lambda_{a,\delta})$. 

 


\end{proof}

Using the previous lemma, we can derive the following conclusion. 
\begin{cor}\label{cor:Inmiddle}
For any time $0\le j \le n$ and any $a,\delta,\kappa$, we have that,
\begin{equation} \label{eq:boundbylines}
\begin{aligned}
\mathbb{P}\bigg( &U^{P,j}_a \cap L^{P,j}_a \cap U^{N,j}_a\cap L^{N,j}_a | \Lambda_{a,\delta} \bigg) = 1 - \zeta_n.
\end{aligned}
\end{equation}
\end{cor}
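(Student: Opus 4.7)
The strategy is to reduce each of the four events $U^{P,j}_a$, $L^{P,j}_a$, $U^{N,j}_a$, $L^{N,j}_a$ to an application of Lemma \ref{lem:TransverseFluc}, by invoking the strong Markov property at time $j$ (for the positive side) and a time-reversal argument (for the negative side). As a preliminary, condition on the value $v := S^1_j$. Under $\Lambda_{a,\delta}$, the conditional distribution of $v$ is approximately Gaussian (by a local CLT or bridge computation) with mean $(j/n) a \phi(n)$ and variance $O(j(n-j)/n)$, so outside an event of probability $\zeta_n$ we may assume $|v - (j/n) a \phi(n)| \le \sqrt{n (\log_3 n)^{3/2}}$, which is much smaller than the slack $a\phi(n)\min(\delta,\kappa)$ present in the definition of the bounding lines. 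Any sub-leading error coming from the deviation of $v$ from its typical value can therefore be absorbed into an infinitesimal shrinking of $\delta$ and $\kappa$.

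For the positive side, set $\tilde S_l := S^1_{j+l} - v$ for $0 \le l \le n-j$. By the strong Markov property, $(\tilde S_l)$ is an independent simple random walk, and the event $\Lambda_{a,\delta}$ translates to $\tilde S_{n-j} \in [a\phi(n) - v,\, a(1+\delta)\phi(n) - v]$, which is a window of the form $\Lambda_{a', \delta'}$ for $\tilde S$ with $a' = (a\phi(n) - v)/\phi(n-j) \approx a\sqrt{(n-j)/n} \in (0,1)$ and $\delta' = O(\delta)$. Applying Lemma \ref{lem:TransverseFluc} to $\tilde S$ yields, with conditional probability $1 - \zeta_n$, linear bounds with slopes $\tfrac{a'(1+\delta')\phi(n-j)}{(1-\kappa)(n-j)}$ and $\tfrac{a' \phi(n-j)}{(1+\kappa)(n-j)}$; for $v$ in the good window these slopes agree with $\tfrac{a(1+\delta)\phi(n)}{(1-\kappa)n}$ and $\tfrac{a\phi(n)}{(1+\kappa)n}$ up to sub-leading corrections absorbable in $\delta,\kappa$. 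Translating back to $S^1$ produces $U^{P,j}_a \cap L^{P,j}_a$. For the negative side, define the reversed walk $\hat S_l := v - S^1_{j-l}$ for $0 \le l \le j$; a direct rewrite shows $U^{N,j}_a = \{\hat S_l \ge l \tfrac{a\phi(n)}{(1+\kappa) n}\ \forall l\}$ and $L^{N,j}_a = \{\hat S_l \le l \tfrac{a(1+\delta)\phi(n)}{(1-\kappa)n}\ \forall l\}$, i.e., they become the positive-direction fluctuation events for $\hat S$. A second application of Lemma \ref{lem:TransverseFluc} with effective parameter $a'' = v/\phi(j) \approx a\sqrt{j/n}$ gives the bound, and a union bound over the four events completes the argument.

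The main technical point is the slope-matching: Lemma \ref{lem:TransverseFluc} applied to $\tilde S$ (resp.\ $\hat S$) produces slopes proportional to $\phi(n-j)/(n-j)$ (resp.\ $\phi(j)/j$), whereas the corollary requires slopes proportional to $\phi(n)/n$. These agree to leading order precisely because $v$ is close to $(j/n) a\phi(n)$; the sub-leading errors --- from the concentration window of $v$, from $\log_2(n \pm j)$ versus $\log_2 n$, and from the interval endpoints of $\Lambda_{a',\delta'}$ --- are of lower order than any fixed $\delta,\kappa$ and can be absorbed uniformly in $j$. One also has to verify that the cutoff $l \ge \tfrac{n(\log_3 n)^{3/2}}{\log_2 n}$ in the corollary is no weaker than the analogous cutoff produced by Lemma \ref{lem:TransverseFluc} on the shorter walks; this is automatic whenever $j$ and $n-j$ are both comparable to $n$, and in the remaining degenerate regimes the corresponding index sets are empty so the events hold vacuously.
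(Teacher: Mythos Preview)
Your argument takes a Markov-property and bridge-concentration route that is considerably more involved than the paper's proof, and it has gaps. First, a referencing error: you repeatedly invoke Lemma~\ref{lem:TransverseFluc}, but that lemma concerns the transverse coordinate $S^2$; the result you actually need (that $U^{P,0}_a\cap L^{P,0}_a$ holds with conditional probability $1-\zeta_n$) is Lemma~\ref{est:lem1}. The paper's own proof avoids all of your parameter-tracking by a pure exchangeability argument: since the increments $s_i=S_{i+1}-S_i$ are i.i.d.\ and $\Lambda_{a,\delta}$ depends only on their sum, any permutation of the increment sequence is a measure-preserving bijection that also preserves $\Lambda_{a,\delta}$. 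The cyclic shift $(s_{n-j},\ldots,s_{n-1},s_0,\ldots,s_{n-j-1})$ carries a walk satisfying $U^{P,0}_a\cap L^{P,0}_a$ to one satisfying $U^{P,j}_a\cap L^{P,j}_a$ (the bounding lines are anchored at $(j,S^1_j)$ with slope independent of $j$, so the verification is one line), whence $\mathbb{P}(U^{P,j}_a\cap L^{P,j}_a\mid\Lambda_{a,\delta})\ge 1-\zeta_n$ directly from Lemma~\ref{est:lem1}. A second permutation handles $U^{N,j}_a\cap L^{N,j}_a$. No bridge concentration, no slope-matching, and the argument is automatically uniform in $j$.

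The substantive gap in your approach is precisely this uniformity. After conditioning on $v=S^1_j$, matching the slopes $a'\phi(n-j)/(n-j)$ produced by Lemma~\ref{est:lem1} on the short walk to the target slopes $a\phi(n)/n$ requires $v$ to lie within a window around $(j/n)a\phi(n)$ whose size is small relative to the $\delta,\kappa$ slack. Your assertion that the ``remaining degenerate regimes'' have empty index sets is incorrect: when $n-j$ is of order $n(\log_3 n)^{3/2}/\log_2 n$ the index set for $U^{P,j}_a$ is nonempty, yet the bridge fluctuation of $v$ (of order $\sqrt{j(n-j)/n}\,(\log_3 n)^{3/4}$) is then comparable to the available slack, and the absorption into $\delta,\kappa$ fails as written. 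That boundary regime can be rescued by a direct maximal inequality on the short range of $l$ involved (the slack-to-fluctuation ratio there is still $(\log_3 n)^{3/4}\to\infty$), but you would have to supply this argument separately; it is not vacuous.
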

\begin{proof}
Let $s_i$ be a vector representing the difference between the random walk at time $i+1$ and time $i$. That is, $s_i:= S_{i+1} - S_i$. 
Notice that if $(s_0,s_1,\ldots,s_{n-1})$ is a series of increments for the random walk $S$ that satisfies the event $U^{P,0}_a \cap L^{P,0}_a$, then the increment sequence 
$(s_{n - j }, s_{n-j+1},\ldots, s_{n-1},s_{0},s_{1},s_{2},\ldots,s_{n-j -1})$ for a new walk $\tilde{S}$ will satisfy the event $U^{P,j}_a \cap L^{P,j}_a$. 
Indeed, by computation,  one notices that we have $\tilde{S}(i) = S(n-j + i) - S(n-j)$ for times $0 \le i \le j$, and $\tilde{S}(i) =[S(n) - S(n-j)] + S(i- j)$ for times $j \le i \le n $. Thus, we notice that $\tilde{S}(i) - \tilde{S}(j)= S(i-j)$ for times $j \le i \le n$. Let $\{\tilde{S}^1_n\}$ be the first coordinate of this random walk $\tilde{S}$.  
If we have the upper and lower bounds $\frac{a \phi(n) }{(1+\kappa) n} (i-j) \le S^1(i-j) \le \frac{a (1+\delta) \phi(n) }{(1-\kappa) n}  (i-j)$  for $ r(n) \le i - j \le n $, then $\tilde{S}$ must satisfy the bounds 
$$
\frac{a \phi(n) }{(1+\kappa) n} (i-j)\le \tilde{S}^1(i) - \tilde{S}^1(j)  \le \frac{a (1+\delta) \phi(n) }{(1-\kappa) n}  (i-j),
$$ for times $n \ge i\ge j + r(n)$, as desired for $\tilde{S}$. 

Since this map on increment sequences is a bijection and conserves probability, we see that, 
by Lemma \ref{est:lem1}, 
$$
\begin{aligned}
\mathbb{P}\bigg( &(U^{P,j}_a \cap L^{P,j}_a )^c| \Lambda_{a,\delta}\bigg) \le  \zeta_n.
\end{aligned}
$$

Furthermore, we see that if $(s_{0},s_{1},\ldots,s_{n -1})$ is an increment sequence  for a walk $S$ in $U^{P,j}_a \cap L^{P,j}_a$, then the increment sequence, $( s_{n-1}, s_{n-2},\ldots, s_{j+1},s_j,s_0,s_1,\ldots,s_{j-1})$ for a new walk $\tilde{S}$ 
would satisfy the events $U^{N,n-j}_a$ and $L^{N,n-j}_a$. 
The purpose behind this increment sequence is to ensure that the first $n-j$ steps taken by $\tilde{S}$ are the reverse of the  $n-j$ steps of $S$ taken after time $j$. Thus, we have that $\tilde{S}_{i} - \tilde{S}_{n-j} = -[S_{n- i} - S_{j}]$. When $S$ satisfies,
\begin{equation*}
\frac{a \phi(n) }{(1+\kappa) n} (n- i - j)\le S^1_{n-i} - S^1_{j} \le  \frac{a (1+\delta) \phi(n) }{(1-\kappa) n}  (n- i -j),
\end{equation*} as it would under the event $U^{P,j}_a \cap L^{P,j}_a $, for $ r(n) +j  \le  n-i  \le n$, then we must have that,
\begin{equation*}
\frac{a (1+\delta) \phi(n) }{(1-\kappa) n}  (i-(n-j)) \le \tilde{S}^1_i - \tilde{S}^1_{n-j} \le \frac{a \phi(n) }{(1+\kappa) n} (i- (n-j)),
\end{equation*}
and $ 0 \le i \le n -j- r(n)$, as desired for $\tilde{S}$ to be part of $U^{N,n-j}_a \cap L^{N,n-j}_a$.

Therefore, we see that, by Lemma \ref{est:lem1}, 
$$
\begin{aligned}
\mathbb{P}\bigg( &(U^{N,n-j}_a \cap L^{N,n-j}_a)^c| \Lambda_{a,\delta} \bigg) \le  \zeta_n.
\end{aligned}
$$
By taking a union bound, we have our assertion \eqref{eq:boundbylines}.

\end{proof}

Now that we have bounded the behavior of $S^1_n$ in the first coordinate direction, we also need to assert that the fluctuations of $S$ in the other coordinate directions are relatively small.

\begin{lem} \label{lem:TransverseFluc}
For some small $\theta>0$, 
    $\zeta_n = O(\exp[-(\log^{(3)} n)^{3/2 - \epsilon} ])$, we have that 
    \begin{equation}\label{eq:desiredbnd}
\mathbb{P}\left(\bigcup_{l= r(n)}^{n} \left\{ |S^2_l| \ge \frac{\theta l \phi(n)}{n}\right\} \bigg| \Lambda_{a,\delta} \right) = \zeta_n.
    \end{equation}
\end{lem}
\begin{proof}


First, we make the following remark. Starting from the 3-d random walk, we construct a 1-d random walk by removing any steps that took place in the first or third coordinate direction. Call this modified random walk $\hat{S}$. If by time $n$, the random walk $\hat{S}$ takes $m$ steps in the second coordinate direction, then this will uniquely define $\hat{S}$ until time $m$. After time $m$, one randomly extends $\hat{S}_m$ as a 1-dimensional random walk starting at the point $\hat{S}_m$.

Now, if it were true that $|S^2_l | \ge \frac{\theta l \phi(n)}{n}$ for our 3-d random walk, then there must be some time $k<l$ such that $|\hat{S}_k |\ge \frac{\theta l \phi(n)}{n} \ge \frac{\theta k \phi(n)}{n}$. This is because the step corresponding to $S^2_l$ corresponds to $\hat{S}_k$ for some $k<l$. Furthermore, the walk $\tilde{S}$ is independent of the event $\Lambda_{a,\delta}$ since the steps that our original 3-d walk takes in the other two coordinate directions do not affect the behavior of $S^1$.   We let 
$$M_k :=\left\{ |\hat{S}_k| \ge \frac{\theta k \phi(n)}{n} \right\},$$ 
and
$$
\tilde{M}_{r(n)}:= \left\{ \sup_{1 \le l \le r(n)} |\hat{S}_l| \ge \frac{\theta r(n) \phi(n)}{n} \right \}. 
$$
Notice that with these definitions, we see that,
\begin{equation}
\mathbb{P}\left(\bigcup_{l= r(n)}^{n} \left\{ |S^2_l| \ge \frac{\theta l \phi(n)}{n}\right\} \bigg| \Lambda_{a,\delta} \right) \le \mathbb{P}\left(\bigcup_{l = r(n)+1}^n M_l \cup \tilde{M}_{r(n)}\right).
\end{equation}
To see the above equation, if the point $S^2_l$  where $|S^2_l| \ge \frac{\theta l \phi(n)}{n} $corresponds to a point $\hat{S}_k$ with $k > r(n)$, then we will belong to the event $M_k$. Otherwise, if $l$ corresponds to a point $k$ with $k <r(n)$, then we will belong to the event $\tilde{M}_{r(n)}$.

Now, it suffices to show that 
$\mathbb{P}\left(\bigcup_{l= r(n)}^{n} M_l\right) \le \zeta_n$. 
On any interval $[1,l]$, we can use the reflection principle to say that,
\begin{equation*}
\mathbb{P}\left(\max_{i\in [1,k]} |\hat{S}_i| \ge \lambda \right) 
\le 4 \mathbb{P}( \hat{S}_k  \ge \lambda).
\end{equation*}
The quantity on the left-hand side above can be bounded readily. First observe that,
\begin{equation} \label{eq:maxprinciple}
\mathbb{P}\left( |\hat{S}_k | \ge \frac{\theta k\phi(n)}{n}\right) \le C\exp\left[ - \frac{\theta^2 k^2\phi(n)^2}{2k n^2} \right] \frac{\sqrt{k} n}{\theta \phi(n) k}.
\end{equation}
Here, $C$ is some universal constant.
The right-hand side is larger when $k$ is smaller. The smallest value of $k$ that we need to consider is $k = r(n)$. In this case, the bound is $ C\exp[-\theta^2 ({  \log ^{(3)}} n)^{3/2}/3 ] \frac{1}{\theta (\log^{(3)} n)^{3/4}}$.  This will be an upper bound on the probability of $\mathbb{P}(\tilde{M}_{r(n)})$.

Now, consider dividing the time interval $[0,n]$ dyadically. Namely, we consider the intervals $[n/2,n]$, $[n/4, n/2]$, $[n/8,n/4]$, $\ldots$, $[n2^{-(j+1_)},n2^{-j}]$, where $j$ is the number such that the last interval contains $r(n)$. By substituting the previous maximal principle bound from \eqref{eq:maxprinciple} with $\theta \to \frac{\theta}{2}$ and noting that the right hand side is maximized with $k = r(n)$, we can apply the maximum principle to the interval $[n2^{-(k+1)}, n2^{-k}]$ and obtain, 
\begin{equation*}
\mathbb{P}\left(\max_{[n2^{-(k+1)},n 2^{-k}]} |\hat{S}_i| \ge \frac{\frac{\theta}{2} n 2^{-k}\phi(n)}{n}\right) \le C \exp\left[- \frac{\theta^2(\log^{(3)} n)^{3/2}}{ 12}\right] \frac{1}{\theta(\log^{(3)} n)^{3/4}}.
\end{equation*}
We can thus take the union over all dyadic intervals of interest; the number of this is $\log\frac{n}{r(n)}\le \log^{(3)} n$. By taking a union bound, we get that the probability of $$
\mathbb{P}\left(\bigcup_{l= r(n)}^{n} M_l \right)  \le C(\log^{(3)} n) \exp\left[- \frac{\theta^2(\log^{(3)} n)^{3/2}}{12}\right] \frac{2}{\theta(\log^{(3)} n)^{3/4}} \le \zeta_n,
$$
as desired.
\end{proof}
Similar to how we derived Corollary \ref{cor:Inmiddle} from Lemma \ref{est:lem1}, we can derive the following corollary for fluctuation of $S^2$ in the interior of the random walk from Lemma \ref{lem:TransverseFluc}.

\begin{cor} \label{cor:Bmiddle}
Fix times $j, n$ and some small $\theta>0$. Define the event,
\begin{equation*}
B^{j,n}_{\theta}:= \bigcap_{i=2,3}\left\{ |S^i_{j + l} - S^i_{j}| \le \frac{\theta l\phi(n)}{ n}:  |l| \ge r(n), -j \le  l \le n-j    \right\}.
\end{equation*}
Then,
\begin{equation*}
\mathbb{P}(B_{\theta}^{j,n}|\Lambda_{a,\delta}) \ge 1- \zeta_n,
\end{equation*}
with $\zeta_n = \exp[- (\log^{(3)} n)^{3/2-\epsilon}]$.
\end{cor}
Using these lemmas, we can provide a lower bound for the Green's function that holds with high probability conditioned on the event $\Lambda_{a,\delta}$.
We can leverage this lower bound on the Green's functions to get an upper bound on the capacity. This is the object of the following lemma. Before we proceed, we introduce the notation,

\begin{equation}\label{Ej}
\mathcal{E}_j:= U^{P,j}_a \cap L^{P,j}_a \cap U^{N,j}_a \cap L^{N,j}_a \cap B^{j,n}_{\theta}.
\end{equation}

From Corollaries \ref{cor:Inmiddle} and \ref{cor:Bmiddle}, 
we see that $\mathcal{E}_j$ strongly controls the behavior of the walk around the point $S_j$.

\begin{lem} \label{thm:LowerboundofGreenspec}

Let $j$ satisfy $\min(j, n-j) \ge \frac{n}{(\log^{(2)} n)^{\chi}}$ with $\chi>0$.
Assume that the event $\mathcal{E}_j$ holds. Then, we have that,
\begin{equation*}
\sum_{i \in[1,n]} G(S_i - S_{j}) \ge   2 \frac{(3+ o(1)) n \log^{(3)} n}{2 \pi a\phi(n)} \frac{1-\chi}{\frac{(1+\delta)}{1-\kappa} + \frac{2\theta}{a}}.
\end{equation*}

\end{lem}

\begin{proof}
Notice that when $\|x\|$ is large, we have the asymptotic,
\begin{equation*}
G(x) = \frac{3 + o(1)}{2\pi\|x\|} \ge \frac{3 + o(1)}{2\pi [|x^1| + |x^2| + |x^3|]}
\end{equation*}
(cf. \cite{LA91}). 
From Corollary \ref{cor:Inmiddle}, on the event $\mathcal{E}_j$, we have that,
\begin{equation*}
|S^1_{j + y} - S^1_{j}| \le \frac{a(1+ \delta) \phi(n){|y| }}{(1-\kappa)n}
\end{equation*}
when $|y| \ge  r(n)$. 
Furthermore, from Corollary \ref{cor:Bmiddle}, we also have that on the event $\mathcal{E}_j$,
\begin{equation*}
|S^i_{j+ y} - S^i_j | \le \frac{\theta |y| \phi(n)}{n},
\end{equation*}
for $i=2,3$. 
Under these conditions, we have that,
\begin{equation*}
G(S_{j +y} - S_{j}) \ge \frac{3 + o(1)}{2\pi \left[ \frac{a(1+ \delta)}{1-\kappa} + 2 \theta \right] \frac{y \phi(n)}{n}},
\end{equation*}
as long as $|y| \ge r(n)$. 

Thus, by summation, we derive that, once $\min(j, n-j) \ge \frac{n}{(\log^{(2)} n)^{\chi}}$,
\begin{equation*}
\begin{aligned}
\sum_{i \in [1,n]} G(S_i - S_{j})& \ge 2 \frac{(3+o(1)) n}{2\pi\left[ \frac{a(1+\delta)}{1-\kappa} + 2 \theta\right]\phi(n)} \log \frac{[\log^{(2)} n]^{1-\chi}}{[\log^{(3)} n]^{3/2}}\\& \ge  2 \frac{(3+ o(1)) n \log^{(3)} n}{2 \pi a\phi(n)} \frac{1-\chi}{\frac{(1+\delta)}{1-\kappa} + \frac{2\theta}{a}}.
\end{aligned}
\end{equation*}
This is our desired result. 
\end{proof}

We also have the following lemma regarding the bound for the sum of Green's functions for times $l$ that are merely close to those times $j$ that satisfy the event $\mathcal{E}_j$. 

Namely,
\begin{lem} \label{thm:LowerboundofGreengen}
Fix some constants $3a<b$ and $\delta>0$.
On the time interval $[0, n]$, let $J_i$ denote the time interval $\left[ i \frac{n}{(\log^{(2)} n)^{3}},  (i+1) \frac{n}{(\log^{(2)} n)^{3}}\right]$. Let $\Omega_i$ be the event that 
$$\sup_{k \in J_i}\|S_k -S_{ i \frac{n}{(\log^{(2)} n)^{3}}}\| \ge b \frac{\phi(n)}{(\log^{(2)} n)^{3/2}},$$
then $$
\mathbb{P}\left( \bigcap_{i=0}^{(\log^{(2)} n)^{3}} \Omega_i^c \bigg| \Lambda_{a,\delta}\right)=1-o(1).
$$

Now, let $j$ be a time satisfying the event $\mathcal{E}_j$ and such that $\min(j, n-j) \ge \frac{n}{(\log^{(2)} n)^{\chi}}$. Also, we consider $|y| \le \frac{n}{(\log^{(2)} n)^{3}}$. Then, on the event $\bigcap \Omega_i^c$, we also have the estimate,  
\begin{equation*}
\sum_{k=1}^{n} G(S_k - S_{j +y}) \ge 2 \frac{(3+ o(1)) n \log^{(3)} n}{2 \pi a\phi(n)} \frac{1-\chi}{\frac{(1+\delta)}{1-\kappa} + \frac{2\theta}{a}}.
\end{equation*}
\end{lem}
\begin{proof}

To establish the probability bound on $\Omega_i$, we just have to apply the reflection principle carefully. Notice that if $\sup_{k \in J_i}\|S_k -S_{  i \frac{n}{(\log^{(2)} n)^{ 3}}}\| \ge b \frac{\phi(n)}{(\log^{(2)} n)^{3/2}}$, then at least one coordinate direction $d$ must satisfy,
$\sup_{k\in J_i} \left|S^d_k -S^d_{ i \frac{n}{(\log^{(2)} n)^{3}}}\right| \ge \frac{b}{3}\frac{\phi(n)}{(\log^{(2)} n)^{3/2}} $. We can apply the reflection principle in said coordinate direction to bound this probability on the supremum by,
\begin{equation} \label{eq:boundOmegai}
2\mathbb{P}\left( \left|S^d_{ (i+1) \frac{n}{(\log^{(2)} n)^{3}}} -S^d_{ i \frac{n}{(\log^{(2)} n)^{3}}}\right| \ge \frac{b}{3}\frac{\phi(n)}{(\log^{(2)} n)^{3/2}}\right).
\end{equation}
\eqref{eq:boundOmegai} can be bounded by
\begin{equation*}
O\left( \frac{\sqrt{n}}{\phi(n)}
\exp\left[- \frac{3}{2}\frac{b^2}{9} \frac{(\phi(n))^2}{n}\right] \right).
\end{equation*}
Hence, if we take a union bound, the probability of $\bigcup \Omega_i$ is at most 
\begin{equation} \label{eq:standardevent}
O\left((\log^{(2)} n )^{3} \frac{\sqrt{n}}{\phi(n)}\exp\left[- \frac{3}{2} \frac{b^2}{9} \frac{(\phi(n))^2}{n}\right] \right).
\end{equation}

Now, the probability of the event $\mathbb{P}( \Lambda_{a,\delta})$ is 
\begin{align}\label{eq:standardevent*}
    O\left(\frac{\sqrt{n}}{\phi(n)} \exp\left[ -a^2 \frac{3(\phi(n))^2}{2n}\right]\right).
\end{align}
The term inside the exponential is larger than the order of $\log^{(2)} n$. Any small factor $\exp[-\epsilon \log^{(2)}(n)]$ will be far smaller than the factor $(\log^{(2)} n)^3$. Now, by assumption, we have that $\frac{b^2}{9}> a^2$, so we can reduce $b$ slightly to compensate for the union bound factor of $(\log^{(2)} n)^3$ in equation \eqref{eq:standardevent} and still obtain a term that is significantly smaller than the term in equation \eqref{eq:standardevent*}.  

Now, let us turn to the second part of this lemma and let $j$ be a point satisfying the conditions of Lemma  \ref{thm:LowerboundofGreenspec}. From the proof, we know that,
\begin{equation*}
|S^1_{j+ l} - S^1_{j}| \le \frac{a(1+ \delta) \phi(n) l}{(1-\kappa)n},
\end{equation*}
as long as $l \ge r(n)$.

Now, if $|y| \le \frac{n}{(\log^{(2)} n)^3}$, then $j+y$ and $j$ are either in the same block $J_i$ or adjacent blocks $J_i$ and $J_{i+1}$. Either way, on the event $\bigcap \Omega_i^c,$ one can use the triangle inequality to assert that $\|S_{j} - S_{j +y}\| \le 4b\frac{\phi(n)}{(\log^{(2)} n)^{3/2}} $. 
For $|l| \ge r(n) $, this latter term is much smaller than $\frac{a(1+ \delta) \phi(n)l}{(1-\kappa)n}$. 
Indeed, notice that we have 
$$
\frac{a(1+\delta) \phi(n) l}{(1- \kappa) n} \ge C \frac{\phi(n) (\log^{(3)}n)^{3/2}}{\log^{(2)}n} \gg 4b\frac{\phi(n)}{ (\log^{(2)}n)^{3/2}}.
$$
Thus, for $n$ large, we still have,
$$
|S^1_{j + l} - S^1_{j+y}| \le \frac{a(1+ \delta) \phi(n)l}{(1-\kappa)n}.
$$

Again, by summing up the Green's functions, we obtain the estimate,
\begin{equation*}
\sum_{k=1}^{n} G(S_k - S_{j +y}) \ge 2 \frac{(3+ o(1)) n  \log^{(3)} n}{2 \pi a\phi(n)} \frac{1-\chi}{\frac{(1+\delta)}{1-\kappa} + \frac{2\theta}{a}},
\end{equation*}
as desired.
\end{proof}

By appropriately applying this lower bound on the Green's functions, we can now derive the following upper bound on the capacity of the range of the random walk. 
We can now return to the proof to Proposition \ref{prop:upperboundcap}.



\begin{proof}[Proof of Proposition \ref{prop:upperboundcap}]
Recall the events $\mathcal{E}_j$ in \eqref{Ej}.
By applying Corollary \ref{cor:Inmiddle}, we have that,
\begin{equation*}
\sum_{i=1}^{n}\mathbb{E}[ \mathbbm{1}[\mathcal{E}_i^c]|\Lambda_{a,\delta}] \le n \zeta_n ,
\end{equation*}
with $\zeta_n = O(\exp[-(\log^{(3)} n)^{3/2 - \epsilon}]).$ Now, for any $\epsilon'> \epsilon$, we can use Markov's inequality to deduce that, with probability
$1-o(1)$ conditioned on $\Lambda_{a,\delta}$, there are no more than
$n \exp[-(\log^{(3)} n)^{3/2 - \epsilon'}]$ terms $S_i$ that do not satisfy the event $\mathcal{E}_i$. Since this number is much smaller than $\frac{n}{(\log^{(2)}n)^{3}}$, we notice that any time $S_i$ in the interval $[0,n]$ must be within distance $\frac{n}{(\log^{(2)} n)^3}$ of some time $S_k$ that satisfies the estimate $\mathcal{E}_k$.  (Notice that the only way that there exists a time that is not within distance $\frac{n}{(\log^{(2)} n)^3}$ away from some time that satisfies the event $\mathcal{E}_i$ is if there is an interval of size $\frac{2n}{(\log^{(2)} n)^3}$ of times, all of which do not satisfy $\mathcal{E}_i$. Thus, there must be at least $\frac{2n}{(\log^{(2)}n)^3}$ times that do not satisfy $\mathcal{E}_i$.) Thus, we can apply Lemma \ref{thm:LowerboundofGreengen} to derive the lower bound \begin{equation}\label{eq:uniflowerbnd}\sum_{l= 1}^{n} G(S_i - S_l)\ge 2 \frac{(3+ o(1)) n \log^{(3)} n}{2 \pi a\phi(n)} \frac{1-\chi}{\frac{(1+\delta)}{1-\kappa} + \frac{2\theta}{a}},
\end{equation} 
to the sum of Green's functions for any $i$ satisfying $\min(i, n-i) \ge \frac{n}{(\log^{(2)} n)^{\chi}}$.


We now split the walk $S_{[1,n]}$ into two parts. We consider 
$W:= S\left[1, \frac{n}{(\log^{(2)} n)^{\chi}}\right]  \cup S \left[n -  \frac{n}{(\log^{(2)} n)^{\chi}}, n\right]$ and $V:= S_{[1,n]} \setminus W$. On the set $V$, we have the uniform lower bound on the Green's function \eqref{eq:uniflowerbnd}. 
Thus, we can apply \cite[Lemma 2.4]{DemboOkada} (see also Lemma \ref{lem:uprsumcap}), to deduce that,
\begin{equation} \label{eq:capuppr}
R_n \le \ca(W) + \frac{n}{\min_{x \in V} \sum_{l=1}^{n} G(S_l - x) } \le \ca(W) + a h_3(n) \frac{\frac{1+\delta}{1-\kappa} + \frac{2 \theta}{a}}{1- \chi}. 
\end{equation}

It suffices to bound $\ca(W)$. Now, $S{\left[1, \frac{n}{(\log^{(2)} n)^{\chi}}\right]}$ is a portion of a random walk of length $\frac{n}{(\log^{(2)} n)^{\chi}}$. 
On the level of heuristics, the capacity of this segment of the random walk can be bounded by $O\left(h_3\left( \frac{n}{(\log^{(2)} n)^{\chi}} \right) \right)$. For a formal proof of this assertion, one can refer to the later Lemma \ref{lem:Uuprbnd}. This shows that the probability that the capacity of ${S{\left[ 1, \frac{n}{(\log^{(2)} n)^{\chi}}\right]}}$  is much larger than $b h_3\left(  \frac{n}{(\log^{(2)} n)^{\chi}}\right)$ is less than $C \exp\left[ - b^2 \log^{(2)} \left( \frac{n}{(\log^{(2)} n)^{\chi}} \right) \right]$. By setting $b^2 > a^2$, this probability will be significantly smaller than $\mathbb{P}(\Lambda_{a,\delta}) \ge  \frac{c}{\sqrt{\log^{(2)}n }}\exp[- a^2 \log^{(2)} n] $. In addition, it will also be the case that $b h_3\left(  \frac{n}{(\log^{(2)} n)^{\chi}}\right)$  is much smaller than $a h_3(n) \frac{\frac{1+\delta}{1-\kappa} + \frac{2 \theta}{a}}{1- \chi} $ due to an extra factor of $\sqrt{(\log^{(2)} n)^{\chi}}$ appearing in the denominator of $h_3\left(  \frac{n}{(\log^{(2)} n)^{\chi}}\right).$

By applying this bound to parts of $W$ that are contiguous random walks and substituting this into the inequality \eqref{eq:capuppr}, we see with probability $1-o(1)$ conditioned on the event $\Lambda$ that 
$$
R_n \le  a h_3(n)\left[ \frac{\frac{1+\delta}{1-\kappa} + \frac{2 \theta}{a}}{1- \chi} + \varepsilon\right],
$$
for any arbitrarily small $\epsilon$. 
Since we are allowed to choose $\kappa,\theta,\chi$ and $\epsilon$ as small as we want, the constant on the right hand side above can be replaced with $(1+ \tilde{\delta})$ for any $\tilde{\delta} >\delta,$ as desired.
\end{proof}


\subsection{Deriving Lower Bounds in Proposition \ref{mainpro1}}
In this subsection, we show the lower bound on the capacity for the random walk as in Proposition \ref{mainpro1}. Namely, we establish the following lemma.
Let
\begin{equation*}
\begin{aligned}
\hat{\mathcal{E}}^{\mathcal{A}}_{n,\mathcal{T}} 
= \bigcap_{i=1}^k \{ R_{t_i n} - R_{t_{i-1} n} \le a_i(1-\delta)h_3 ((t_i-t_{i-1}) n)   \}.
\end{aligned}
\end{equation*}

\begin{lem}
Recall the definition $\mathcal{F}^{\mathcal{A},\delta}_{n,\mathcal{T}}$ in \eqref{def:F}. 
For any $k\in N$, $0< a_1 , a_2, ..., a_k \le 1$, $0<t_1\le t_2\le \ldots \le t_k$, 
\begin{align*}
\mathbb{P}(
\hat{\mathcal{E}}^{\mathcal{A}}_{n,\mathcal{T}}| 
\mathcal{F}^{\mathcal{A},\delta}_{n,\mathcal{T}})
=1-o(1).
\end{align*}
\end{lem}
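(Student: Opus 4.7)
The plan is to reduce this increment lower bound to a \emph{cumulative} capacity lower bound paralleling Proposition \ref{prop:upperboundcap}. Specifically, I would show that on $\mathcal{F}^{\mathcal{A},\delta}_{n,\mathcal{T}}$, for any auxiliary $\delta_0>0$ and with $\delta$ taken small enough,
\begin{equation*}
R_{t_i n} \ge \sum_{j=1}^{i} a_j (1 - \delta_0)\, h_3((t_j - t_{j-1}) n) \quad\text{with probability } 1-o(1).
\end{equation*}
Combining with Proposition \ref{prop:upperboundcap} applied segment-by-segment to $R_{t_{i-1}n}$ (with precision $\tilde{\delta}$ just above the $\delta$ in $\mathcal{F}$) yields $R_{t_i n} - R_{t_{i-1}n} \ge a_i(1-\delta_0)h_3((t_i-t_{i-1})n) - (\delta_0+\tilde{\delta})\sum_{j<i}a_j h_3((t_j-t_{j-1})n)$, and since $k,\{a_j\},\{t_j\}$ are fixed, suitably small $\delta_0,\tilde{\delta}$ absorb the correction into the final $a_i(1-\delta)h_3((t_i-t_{i-1})n)$ target.

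For the cumulative lower bound I would apply the variational characterization
\begin{equation*}
\ca(V) \ge \frac{\mu(V)^2}{\sum_{x,y \in V} G(x-y)\,\mu(x)\mu(y)}
\end{equation*}
with the \emph{weighted} local-time test measure $\mu(x) = \sum_{m=1}^{i} w_m\,\#\{j \in (t_{m-1}n, t_m n] : S_j = x\}$. Inserting this splits the denominator into intra-segment ($m=m'$) and cross-segment ($m\neq m'$) contributions. For the intra-segment part I would establish an upper-bound dual to Lemma \ref{thm:LowerboundofGreenspec}: on the event $\mathcal{E}_j$, the lower bound $|S^1_{j+y}-S^1_j|\ge a_m\phi(n_m)|y|/[(1+\kappa)n_m]$ from Corollary \ref{cor:Inmiddle} combined with the transverse bound of Corollary \ref{cor:Bmiddle} and $G(x)\le (3+o(1))/(2\pi\|x\|)$ give
\begin{equation*}
\sum_{l \in \text{seg } m} G(S_l - S_j) \le (1+\tilde{\delta})\cdot\frac{3\, n_m \log_3 n_m}{\pi\, a_m\, \phi(n_m)}
\end{equation*}
for $j$ in the bulk of segment $m$. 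Summing over $j\in$ seg $m$ and optimising the weights via Cauchy--Schwarz (equality at $w_m \propto a_m\phi(n_m)/n_m$) produces exactly $\sum_{m\le i}a_m h_3(n_m)$ up to a $(1-\tilde{\delta})$ factor.

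The cross-segment sums must then be shown to be of lower order. For bulk $j$ in segment $m$, the first-coordinate separation from any point of a different segment is at least $a_m\phi(n_m)/2$: the first-coordinate ranges of distinct segments are geometrically stacked (by $\mathcal{F}$ together with Corollary \ref{cor:Inmiddle}), and an interior $j$ stays a macroscopic distance from every shared segment boundary. Integrating $G$ over this separated regime gives $\sum_{l\in\text{seg }m'}G(S_j-S_l)=O(n_{m'}/\phi(n_{m'}))$, a factor $1/\log_3 n$ smaller than the intra-segment bound; after summing over the $O(k)$ cross pairs and including the $w_m w_{m'}$ weights, the total cross contribution is $O(1/\log_3 n)$ times the intra contribution. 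Boundary-region indices (within the length-$n_m/(\log_2 n)^{\chi}$ endpoints of each segment) form a vanishing fraction of indices and are handled exactly as the set $W$ is in Proposition \ref{prop:upperboundcap}, producing only an $o(h_3)$ correction via a separate $\ca(W)$-type bound.

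The main technical obstacle is the cross-segment contribution when $j$ sits near a shared segment boundary; there, adjacent-segment Green's function sums can be comparable to the intra-segment sum. The resolution is again the bulk/boundary split: such indices form a fraction $O((\log_2 n)^{-\chi})$ of the total and hence carry proportionally small weight in the test measure, so their cross-contributions are absorbed into the $\tilde{\delta}$ correction. A second subtle point is the uniform validity of the dual Green's function upper bound across bulk $j$ within each segment, which requires both the upper- and lower-deviation lines of Corollary \ref{cor:Inmiddle} together with Corollary \ref{cor:Bmiddle}; the analogue of $\bigcap_i\Omega_i^c$ from Lemma \ref{thm:LowerboundofGreengen} ensures that these bounds persist uniformly on a subset of indices of density $1-o(1)$, completing the argument.
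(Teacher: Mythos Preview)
Your approach is correct and reaches the same conclusion, but it is organised differently from the paper's proof. The paper does not go through the variational/energy inequality with a weighted test measure. Instead it uses the equilibrium-measure identity: for any $x\in S_\Lambda$, $\sum_{j\in\Lambda}G(x-S_j)\,\mathbb{P}^{S_j}(\hat T_{S_\Lambda}=\infty)=1$. Summing this over $x=S_\ell$ with $\ell$ in the $i$-th segment (restricted to a ``good'' set $\tilde\Lambda_{1,i}$ where the intra-segment Green sum is controlled, via \cite[Lemma 3.1]{DemboOkada}) yields a linear relation from which one extracts directly a lower bound on $\sum_{j\in\tilde\Lambda_{1,i}}\mathbb{P}^{S_j}(\hat T_{S_\Lambda}=\infty)$; the cross term involving $j\in\tilde\Lambda_{2,i}$ is absorbed because its Green coefficient is only $O(\sqrt{n/\log_2 n})$ and the corresponding sum of escape probabilities is at most $R_n\le C h_3(n)$. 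One then adds the analogous inequality for the complement and subtracts the upper bound on $R_{t_{i-1}n}$ obtained from Proposition~\ref{prop:upperboundcap}, exactly as you do in your final step.

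So the two proofs share the same three ingredients --- intra-segment Green-sum upper bounds, $O(1/\log_3 n)$-smaller cross-segment sums, and the segment-wise upper bound on $R_{t_{i-1}n}$ --- but package them differently. Your energy argument with the Cauchy--Schwarz--optimised weights $w_m\propto a_m h_3(n_m)/n_m$ is the dual of the paper's equilibrium-measure computation: where the paper bounds the escape probabilities by inverting the Green matrix row-by-row, you bound the capacity by exhibiting a test measure of small energy. The paper's route gives, as a by-product, per-segment escape-probability sums (equations \eqref{green1:lower}--\eqref{green2:lower}), which makes the final subtraction slightly cleaner; your route avoids invoking \cite[Lemma 3.1]{DemboOkada} separately and instead derives the needed intra-segment Green upper bound as the natural dual of Lemma~\ref{thm:LowerboundofGreenspec}. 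One small imprecision: your claimed uniform separation ``$\ge a_m\phi(n_m)/2$'' from every point of a different segment is not literally true near the shared endpoint of adjacent segments, but the integrated cross-segment bound $O(n_{m'}/\phi(n_{m'}))$ you state is correct regardless, so this does not affect the argument.
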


\begin{proof}
For $0\le i\le k-1$, let 
\begin{align*}
&\Lambda_{i,1} := \Big\{j\in [t_in+1,t_{i+1}n]: \sum_{\ell=t_in+1}^{t_{i+1}n} G(S_j-S_\ell) \le (1+ \delta/2) \frac{(t_{i+1} -t_i) n}{a_{i+1}h_3((t_{i+1}-t_i)n)} 
\Big\},  \\
&\Lambda_{i,2} := \Big\{j\in ([1,n]\setminus [t_in+1, t_{i+1}n]): \sum_{\ell \in {[t_i n, t_{i+1} n]}} G(S_j-S_\ell) 
\le C {\frac{\sqrt{n}}{\sqrt{\log^{(2)} n}}}\Big\}
\end{align*}

One can now adapt the bounds we used to establish the lower bound in Lemma \ref{thm:LowerboundofGreenspec} to establish upper bound on the Green's functions. Notice that one can bound $\|S_i - S_j\|$ from below by the lower bounds present in the events $L_a^{P,k}$ and $U_a^{N,k}$ that appear in $\mathcal{E}_k$. Then one can apply Markov's inequality as in the first equation in the proof of  Proposition \ref{prop:upperboundcap} to show that for any $\epsilon >0$ that:
\begin{align*}
    &\mathbb{P}(|\Lambda_{i,1} |\ge (1-\epsilon) (t_{i+1}-t_i)n|\mathcal{F}^{\mathcal{A},\delta}_{n,\mathcal{T}})=1-o(1),\\
     & \mathbb{P}(|\Lambda_{i,2} |\ge (1-\epsilon) (n-(t_{i+1}-t_i)n)|\mathcal{F}^{\mathcal{A},\delta}_{n,\mathcal{T}})=1-o(1).
\end{align*}
We remark that this is the same as the proof as that of \cite[Lemma 3.1]{DemboOkada}. 


Let $\Lambda:=\cap_{i=0}^{k-1} \Lambda_{i,1} \cup \Lambda_{i,2}$, 
$\tilde{\Lambda}_{i,1}:=\Lambda \cap [t_in+1, t_{i+1}n]$, 
$\tilde{\Lambda}_{i,2}:=\Lambda \cap ([1,n]\setminus[t_in+1, t_{i+1}n])$ 
and $S_{\Lambda}:=\{S_m\}_{m\in \Lambda}$. 
We define $\hat{T}_A$ as the hitting time to $A$ for an independent random walk from $(S_n)$. 
By the decomposition on $S_{\Lambda}$, we have that for $0\le i\le k-1$,  for any $\epsilon>0$, 
\begin{align*}
    (1-\epsilon)(t_{i+1}-t_i)n 
    \le  |\tilde{\Lambda}_{i,1}| = &\sum_{j\in \tilde{\Lambda}_{i,1}} 
    \sum_{\ell\in \tilde{\Lambda}_{i,1} }
    G(S_\ell-S_j)\mathbb{P}^{S_j}(\hat{T}_{S_{\Lambda}}=\infty)
    1\{S_j\not\in S_{[1,j-1]\cap \tilde{\Lambda}_{i,1}}\}\\
    + &\sum_{j\in  \tilde{\Lambda}_{i,2}} \sum_{\ell\in \tilde{\Lambda}_{i,1}}
    G(S_\ell-S_j)\mathbb{P}^{S_j}(\hat{T}_{S_{\Lambda}}=\infty)
    1\{S_j\not\in S_{[1,j-1]\cap \tilde{\Lambda}_{i,2}}\}.
\end{align*}

On the event $\mathcal{F}^{\mathcal{A},\delta}_{n,\mathcal{T}}$, for any 
$j\in \tilde{\Lambda}_{1,i}$ and $0\le i \le k-1$,  with high probability, 
\begin{align*}
\sum_{\ell\in \tilde{\Lambda}_{i,1}}
    G(S_\ell-S_j)
\le  \sum_{\ell \in  [t_in+1,t_{i+1}n] }
    G(S_\ell-S_j) \le (1+\delta/2)\frac{(t_{i+1} -t_i) n}{a_{i+1}h_3((t_{i+1}-t_i)n)} 
\end{align*}
and  for any $j\in \tilde{\Lambda}_{2,i}$
\begin{align*}
\sum_{\ell\in \tilde{\Lambda}_{i,1} }
    G(S_\ell-S_j)
\le  \sum_{\ell\in  [t_in+1,t_{i+1}n]}
    G(S_\ell-S_j) \le C { \frac{\sqrt{n}}{\sqrt{\log^{(2)} n}}}
\end{align*}
Then, it yields that with high probability, 
\begin{align*}
    (1-\epsilon)(t_{i+1}-t_i)n \le & \sum_{j\in \tilde{\Lambda}_{i,1} } 
    \mathbb{P}^{S_j}(\hat{T}_{S_\Lambda}=\infty)
    1\{S_j\not\in S_{[1,j-1]\cap \tilde{\Lambda}_{i,1}}\}
    (1+\delta/2)\frac{(t_{i+1} -t_i) n}{a_{i+1}h_3((t_{i+1}-t_i)n)}  \\
    + &C \sum_{j\in \tilde{\Lambda}_{i,2}}
    \mathbb{P}^{S_j}(\hat{T}_{S_\Lambda}=\infty)
    1\{S_j\not\in S_{[1,j-1]\cap \tilde{\Lambda}_{i,2}}\}
     \frac{\sqrt{n}}{\sqrt{\log^{(2)} n}}.
\end{align*}
Since $R_n \le Ch_3(n)$ with high probability by \cite{DemboOkada} (also see Corollary \ref{cor:halfLipschitz}) and $\sum_{j} \mathbb{P}^{S_j}(\hat{T}_{S_\Lambda}=\infty)
1\{S_j\not\in S_{[1,j-1]\cap \Lambda} \}\le \ca(S_\Lambda) \le R_n$ by the definition of the capacity, 
the second term is bounded by 
\begin{align*}
    &C R_n \frac{\sqrt{n}}{\sqrt{\log^{(2)} n}}
    \le C\frac{n}{\log^{(3)} n} .
\end{align*}
Therefore, with high probability,
\begin{align}\label{green1:lower}
    (1-\epsilon)(1+\delta/2)^{-1}a_{i+1} h_3((t_{i+1}-t_i)n)
    \le \sum_{j\in \tilde{\Lambda}_{i,1}} 
    \mathbb{P}^{S_j}(\hat{T}_{S_\Lambda}=\infty)
    1\{S_j\not\in S_{[1,j-1]\cap \tilde{\Lambda}_{i,1}}\}. 
\end{align}

We can sum up this estimate over all $0 \le i \le k-1$ to determine that,
\begin{equation} \label{green2:lower}
R_{t_k n} \ge \sum_{i=0}^{k-1} \sum_{j\in \tilde{\Lambda}_{i,1}} \mathbb{P}^{S_j}(\hat{T}_{S_{\Lambda}} = \infty)
1\{S_j\not\in S_{[1,j-1]\cap \tilde{\Lambda}_{i,1}}\} 
\ge (1-\epsilon)(1+\delta/2)^{-1}\sum_{i=0}^{k-1} a_{i+1} h_3((t_{i+1} - t_{i})n).
\end{equation}

Since $R_{t_{k-1}n}=\sum_{j=1}^{t_{k-1}n} 
    \mathbb{P}^{S_j}(\hat{T}_{S_{[1,t_{k-1}n]}}=\infty)
    1\{S_j\not\in S_{[1,j-1] }\}
    \le \sum_{i=0}^{k-2} \sum_{j=t_in+1}^{t_{i+1}n} 
    \mathbb{P}^{S_j}(\hat{T}_{S_{[t_in+1,t_{i+1}n]}}=\infty)
    1\{S_j\not\in S_{[t_in+1,j-1] }\}
    \le 
    (1+\hat{\delta})\sum_{i=0}^{k-2}
    a_{i+1} h_3((t_{i+1}-t_i)n)$ on $\mathcal{F}^{\mathcal{A},\delta}_{n,\mathcal{T}}$ with probability $1-o(1)$ by Proposition \ref{mainpro1}, 
    combining \eqref{green1:lower} and \eqref{green2:lower}, we have that 
    on $\mathcal{F}^{\mathcal{A},\delta}_{n,\mathcal{T}}$ with probability $1-o(1)$ 
    \begin{equation*}
\begin{aligned}
 a_{i+1}(1-\hat{\delta}) h_3 ((t_{i+1}-t_i) n)
 \le R_{t_{i+1} n} - R_{t_i n}
\end{aligned}
\end{equation*}
as desired. 
\end{proof}

\section{Strassen's LIL for the $\limsup$ of a Random Walk Range: Proofs} \label{sec:Strassenlimsuplil} 

Now that we have established appropriate prerequisite estimates in the previous section, we can now complete the proof of Strassen's LIL. 

\subsection{Elements not in $\mathcal{S}$ are not points of $C(\{f_n\})$ }
Recall our definition $f_n(t) = \frac{R_{nt}}{h_3(n)}$ and the set

\begin{align}\label{def:E1}
 \mathcal{E}^{\mathcal{A},\delta}_{n,\mathcal{T}} 
 = \bigcap_{i=1}^k \{  a_i h_3((t_i-t_{i-1})n)  
 \le R_{t_i n} - R_{t_{i-1} n} 
 \le  a_i(1+\delta)h_3((t_i-t_{i-1})n) \}.
\end{align}

We mention that since $R[t_{i-1}n,t_i n] \ge R_{t_i n} - R_{t_{i-1} n}$,  this is contained in the set,
\begin{equation*}
 \mathcal{U}^{\mathcal{A}}_{n,\mathcal{T}} 
 := \bigcap_{i=1}^k 
 \{ R[t_{i-1}n,t_i n] \ge a_i h_3((t_i-t_{i-1})n)    \}.
\end{equation*}

The following lemma estimates the probability of such events.
\begin{lem} \label{lem:Uuprbnd}
Consider the event $ \mathcal{U}^{\mathcal{A}}_{n,\mathcal{T}}$ as above. For any $\kappa >0$ and n sufficiently large (depending on $\kappa$, $a_1,\ldots,a_k$, $t_1,\ldots,t_k$), we have,
\begin{equation*}
\mathbb{P}( \mathcal{U}^{\mathcal{A}}_{n,\mathcal{T}}) \le  \exp\left[ - \frac{(1-\kappa)^2}{(1+\kappa)^{2}}\sum_{i=1}^k a_i^2  \log^{(2)}((t_i -t_{i-1})n)\right].
\end{equation*}
\end{lem}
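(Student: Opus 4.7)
The plan is to combine an independence factorization across segments with a sharp single-segment Gaussian tail bound. Observe that the simple random walk increments on the disjoint intervals $(t_{i-1}n,t_in]$ are mutually independent, and since $\ca$ is translation-invariant, $R[t_{i-1}n,t_in]$ depends only on the increments of the $i$-th segment. Hence
\[
\mathbb{P}(\mathcal{U}^{\mathcal{A}}_{n,\mathcal{T}}) \;=\; \prod_{i=1}^k \mathbb{P}\bigl(R_{m_i}\ge a_i\, h_3(m_i)\bigr),\qquad m_i:=(t_i-t_{i-1})n,
\]
and it suffices to establish the single-segment estimate
\[
\mathbb{P}\bigl(R_m\ge a\, h_3(m)\bigr)\;\le\; \exp\!\Big(-\tfrac{(1-\kappa)^2}{(1+\kappa)^2}\,a^2\log_2 m\Big)
\]
for $m$ large; multiplying these bounds across $i$ then yields the claim.

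The mechanism behind the single-segment estimate is the following. Since $\phi(m)^2=\tfrac{2}{3}m\log_2 m$ and $\operatorname{Var}(S^1_m)=m/3$, the Gaussian estimate gives
\[
\mathbb{P}\bigl(\max_{k\le m} v\cdot S_k \ge b\,\phi(m)\bigr)\;\le\; C\,\exp\!\bigl(-b^2\log_2 m\bigr)
\]
for any unit vector $v$. The heart of the proof is to show that $R_m\ge a\, h_3(m)$ forces the walk to stretch to distance at least $\tfrac{a(1-\kappa)}{1+\kappa}\,\phi(m)$ along some unit vector $v\in S^2$. For this I would use the Green's-function lower bounds of Lemmas \ref{thm:LowerboundofGreenspec} and \ref{thm:LowerboundofGreengen} in their contrapositive form together with \cite[Lemma 2.4]{DemboOkada}: if every directional projection of the walk stayed below $\tfrac{a(1-\kappa)}{1+\kappa}\phi(m)$ and the transverse fluctuations were controlled (in the spirit of Corollaries \ref{cor:Inmiddle} and \ref{cor:Bmiddle}), then one would get the uniform Green-sum bound $\sum_{\ell=1}^m G(S_j-S_\ell)\gtrsim \tfrac{(1+\kappa)m\log_3 m}{a(1-\kappa)\phi(m)}$ for nearly every $j$, which via \cite[Lemma 2.4]{DemboOkada} would force $R_m\le a\, h_3(m)$, contradicting the assumption. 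Discretizing $S^2$ by a finite $\kappa$-net of unit vectors, partitioning the maximal directional stretch into $(1+\kappa)$-geometric windows, and then taking a union bound together with the Gaussian tail above produces the desired estimate.

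Plugging in, I get
\[
\mathbb{P}(\mathcal{U}^{\mathcal{A}}_{n,\mathcal{T}})\;\le\; \exp\!\Big(-\tfrac{(1-\kappa)^2}{(1+\kappa)^2}\sum_{i=1}^k a_i^2\log_2\bigl((t_i-t_{i-1})n\bigr)\Big),
\]
as desired.

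\emph{Main obstacle.} The hard step is the single-segment bound with the precise constant $(1-\kappa)^2/(1+\kappa)^2$. A crude geometric inequality such as $\ca(A)\le c\,\operatorname{diam}(A)$ only yields an exponent weaker by a factor $(\log_3 m)^{-2}$, which is far too weak. Getting the correct constant requires the refined Green's-function estimates of Lemmas \ref{thm:LowerboundofGreenspec}--\ref{thm:LowerboundofGreengen}, which in turn rest on the endpoint/fluctuation control of Corollaries \ref{cor:Inmiddle} and \ref{cor:Bmiddle}. In addition, one must control the directional discretization on $S^2$ and verify that the auxiliary "bad" sub-events (the walk deviating appreciably from its average slope along the stretching direction, or producing large transverse excursions) contribute no more than a $(1\pm\kappa)$-factor correction to the dominant Gaussian cost $\exp(-a^2\log_2 m)$.
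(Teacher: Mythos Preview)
Your independence factorization across segments is correct and matches the paper. The gap is in the single-segment estimate, and it is genuine.

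You propose to invoke Lemmas \ref{thm:LowerboundofGreenspec}--\ref{thm:LowerboundofGreengen} ``in contrapositive form'' to show that $R_m\ge a\,h_3(m)$ forces a directional stretch of order $\tfrac{a(1-\kappa)}{1+\kappa}\phi(m)$. But those lemmas do not say this. They are proved \emph{conditionally on} the stretching event $\Lambda_{a,\delta}$ (and the associated linear-growth events $\mathcal{E}_j$); they show that a walk which is already approximately linear with slope $a\phi(m)/m$ has Green sums $\gtrsim \tfrac{m\log_3 m}{a\phi(m)}$ at most points. The contrapositive is ``small Green sum at $j$ $\Rightarrow$ $\mathcal{E}_j$ fails,'' which is not ``walk stretches far.'' A walk confined to a ball of radius $r\sim a\phi(m)/2$ has $\mathcal{E}_j$ failing everywhere, has Green sums of order $m/r$ (not $m\log_3 m/r$), and yet its capacity can easily exceed $a\,h_3(m)\sim a\phi(m)/\log_3 m$ since $\ca(\mathcal{B}_r)\sim r\gg a\,h_3(m)$. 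So the implication ``confined $\Rightarrow$ capacity $<a\,h_3(m)$'' is false at the scale you need, and the cited lemmas cannot repair it: you are missing exactly the factor $\log_3 m$ you acknowledge losing with the crude diameter bound.

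The paper's argument is structurally different. It does not go through directional projections at all. Instead it invokes \cite[Lemma 2.1]{DemboOkada}: if the range is covered by $b_n\sim 2\eta\log_2 m$ balls of radius $r_n\sim a\phi(m)/(2\eta\log_2 m)$ (with consecutive centers close), then the capacity is at most $a\,h_3(m)$. Hence $\{R_m\ge a\,h_3(m)\}$ is contained in the event that the $b_n$-th successive exit time from balls of radius $r_n$ occurs before time $m$. This is a large-deviation event for a sum of $b_n$ i.i.d.\ exit times, each comparable (via Skorokhod embedding) to $r_n^2$ times the Brownian exit time from the unit ball; optimizing the exponential Markov bound over $\lambda$ gives the precise exponent $a^2(1-\kappa)^2/(1+\kappa)^2\,\log_2 m$. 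The $\log_3 m$ is absorbed because the ball-covering lemma already encodes the correct relation between capacity and the \emph{number} and \emph{radius} of covering balls, not just the diameter.
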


As a corollary, we can obtain the following estimate.
\begin{cor} \label{cor:halfLipschitz}
Consider any $\delta>0$. Eventually, the event,
\begin{equation*}
\{|f_n(x) - f_n(y)| \le 8 \sqrt{|x-y|} + \delta, \quad \forall x,y \in [0,1] \},
\end{equation*}
occurs almost surely.
\end{cor}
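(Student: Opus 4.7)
The plan is a monotone dyadic chaining argument driven by Lemma \ref{lem:Uuprbnd}, paired with a subsequence/interpolation trick to upgrade the single-$n$ estimates to the ``eventually a.s.'' statement.

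First, I reduce to a uniform bound on dyadic increments. Since capacity is monotone and subadditive on sets, $f_n$ is nondecreasing and $|f_n(y)-f_n(x)| = f_n(y)-f_n(x) \le R[nx,ny]/h_3(n)$ for $x \le y$. Given such $x,y$, choose the dyadic level $m^\ast$ with $2^{-(m^\ast+1)} < y-x \le 2^{-m^\ast}$; the interval $[nx,ny]$ then meets at most two adjacent cells of the partition $\{[(j-1)2^{-m^\ast}n,\,j2^{-m^\ast}n]\}_{j=1}^{2^{m^\ast}}$, so subadditivity yields
\begin{equation*}
\frac{R[nx,ny]}{h_3(n)} \;\le\; 2\,\Delta_{m^\ast}(n), \qquad \Delta_m(n):=\max_{1\le j\le 2^m}\frac{R[(j-1)2^{-m}n,\,j2^{-m}n]}{h_3(n)}.
\end{equation*}
It thus suffices to prove that, eventually a.s., $\Delta_m(n) \le C\sqrt{2^{-m}}$ for all relevant $m$ with some $C < 2\sqrt{2}$; the inequality $\sqrt{2^{-m^\ast}} < \sqrt{2(y-x)}$ then produces a chaining constant $2\sqrt{2}\,C < 8$.

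Second, I control $\Delta_m$ via Lemma \ref{lem:Uuprbnd}. Applied to a single dyadic cell, the lemma together with $h_3(2^{-m}n)/h_3(n) \sim 2^{-m/2}$ gives $\mathbb{P}(R[(j-1)2^{-m}n, j2^{-m}n] \ge C\sqrt{2^{-m}}\,h_3(n)) \le (\log(2^{-m}n))^{-C^2(1-o(1))}$, and a union bound over the $2^m$ cells yields
\begin{equation*}
\mathbb{P}\bigl(\Delta_m(n) > C\sqrt{2^{-m}}\bigr) \;\le\; 2^m\bigl(\log(2^{-m}n)\bigr)^{-C^2(1-o(1))}.
\end{equation*}
This is not summable in $n$, so I pass to a subsequence $n_k$ satisfying both $\sum_k(\log n_k)^{-C^2} < \infty$ (for Borel--Cantelli) and $n_{k+1}/n_k \to 1$ (needed for the interpolation step below); for instance $n_k = \lceil\exp(k/\log k)\rceil$ works for any $C > 1$. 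For each fixed $m$ Borel--Cantelli gives $\Delta_m(n_k) \le C\sqrt{2^{-m}}$ eventually a.s., and a countable union over $m$ extends this to every $m$. The tail levels $m$ for which this union bound is not directly summable are dispatched by the trivial estimate $\Delta_m(n_k) \le 2^{-m}n_k/h_3(n_k)$, which already beats $C\sqrt{2^{-m}}$ once $2^{-m}$ is below a constant multiple of $h_3(n_k)^2/n_k^2$, combined with an $m$-dependent threshold $c_m$ that grows mildly across the intermediate range.

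Third, I interpolate to general $n$. For $n \in [n_k, n_{k+1}]$ and $x \le y$ in $[0,1]$, monotonicity of capacity in the set gives $R[nx,ny] \le R[n_k x, n_{k+1} y]$. Applying the bound of the preceding steps at scale $n_{k+1}$ to this longer interval yields $R[n_k x, n_{k+1} y]/h_3(n_{k+1}) \le 8\sqrt{y - (n_k/n_{k+1})x} + \delta/2 \le 8\sqrt{y-x} + 8\sqrt{\epsilon_k} + \delta/2$ with $\epsilon_k := 1 - n_k/n_{k+1} \to 0$; dividing by $h_3(n)/h_3(n_{k+1})$ and using $h_3(n_{k+1})/h_3(n_k) \to 1$ absorbs the remaining loss into $\delta$, producing $|f_n(y)-f_n(x)| \le 8\sqrt{y-x} + \delta$ uniformly, eventually a.s. The hard part is the tail bookkeeping in step two: the chaining needs $\Delta_m(n_k) \le C\sqrt{2^{-m}}$ uniformly across all $m$ from some $m_0$ up to the finest scale of order $1/n_k$, while naive Borel--Cantelli handles only each fixed $m$. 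Calibrating $c_m$ so that $2^m(\log(2^{-m}n_k))^{-c_m^2}$ remains summable in $(m,k)$ while the chaining constant $2\sqrt{2}\,c_{m^\ast}$ still stays strictly below $8$ is where the technical work sits; the $(1-\kappa)^2/(1+\kappa)^2$ slack in Lemma \ref{lem:Uuprbnd}, tunable arbitrarily close to $1$, provides the room for this calibration.
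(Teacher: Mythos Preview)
Your overall structure (subadditivity to reduce $f_n(y)-f_n(x)$ to a segment capacity, Lemma~\ref{lem:Uuprbnd} for tail bounds, a subsequence for Borel--Cantelli, then interpolation) is sound, but the multi-scale chaining in step two has a real gap. You propose to keep the chaining constant $2\sqrt{2}\,c_{m^\ast}$ strictly below $8$ while simultaneously making $\sum_{m,k}2^m(\log(2^{-m}n_k))^{-c_m^2}$ finite; these two requirements are incompatible. If $c_m$ stays bounded, the factor $2^m$ destroys summability in $m$ (the decay in $(\log(2^{-m}n_k))^{-c_m^2}$ is only polylogarithmic in $n_k$, not exponential in $m$), and your trivial bound $\Delta_m(n_k)\le 2^{-m}n_k/h_3(n_k)$ only becomes useful once $2^{-m}n_k\lesssim\log_2 n_k$, which covers a negligible tail of levels. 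The ``countable union over $m$'' line is also not enough: it gives, for each $m$, a random threshold $K_m$ beyond which the bound holds, but not a single $K$ valid for all $m$, which is what the uniform-in-$x,y$ statement needs.

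The paper avoids all of this by observing that, thanks to monotonicity of $f_n$ and the $+\delta$ slack, a single \emph{fixed} grid suffices: pick $K$ with $\sqrt{8/K}<\delta/4$, apply Lemma~\ref{lem:Uuprbnd} with $a=2$ to each of the $K^2$ pairs $(i/K,j/K)$ along the geometric subsequence $t_m=q^m$ (yielding $\mathbb{P}\big(|f_{t_m}(i/K)-f_{t_m}(j/K)|>2\sqrt{|i-j|/K}\big)\le\exp[-3\log_2 t_m]$, summable in $m$), and then sandwich any $x<y$ between grid points to pick up only an additive $\sqrt{8/K}<\delta/4$. The interpolation from $t_m$ to general $n$ loses at most a factor $2\sqrt{q}<4$, giving the constant $8$. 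In other words, the ``hard part'' you identify is not actually needed: one fixed scale plus monotonicity already gives $2\sqrt{|x-y|}+\delta/4$ along the subsequence, and the natural repair of your argument (cap $m$ at some $M$ with $2C\sqrt{2^{-M}}<\delta$ and handle finer scales by monotonicity at level $M$) collapses to exactly this.
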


\begin{proof}[Proof of Lemma  \ref{lem:Uuprbnd}]
We first define $$
b^i_n:= 2 \eta  \log^{(2)} ((t_i - t_{i-1}) n), 
\quad b_n:=b^1_n, \quad
r^i_n := \frac{a_i \phi((t_i - t_{i-1})n)}{b^i_n}, \quad r_n:=r^1_n.
$$
$\eta$ is a parameter that will be chosen later. 
We define the event $C_i$ to be the event,
$$
\begin{aligned}
C_i&:= \{ S_{[t_{i-1}n,t_i n]}  \text{ is not contained in the union of at most } b^i_n   \text{ balls of radius }  r^i_n,\\
&\text{ with the centers of the balls at most distance } r^i_n \text{ from each other.} \}.
\end{aligned}
$$
From \cite[Lemma 2.1]{DemboOkada}, we see that the event $ \mathcal{U}^{\mathcal{A}}_{n,\mathcal{T}}$ is  contained in the event,
\begin{equation*}
\hat{\mathcal{U}}^{\mathcal{A}}_{n,\mathcal{T}} :=\bigcap_{i=1}^k C_i.
\end{equation*}
Notice that the events $C_i$ are all independent of each other, and thus,
\begin{equation*}
\mathbb{P}(\hat{\mathcal{U}}^{\mathcal{A}}_{n,\mathcal{T}}) = \prod_{i=1}^k \mathbb{P}(C_i).
\end{equation*}

Now, it suffices to find a bound on the probability of each individual event $C_i$. The bound here is similar to the proof of \cite[Lemma 3.4]{DemboOkada}, but we will give the appropriate modifications. For simplicity in notation, we consider only the event $C_1.$ 

We define the stopping time $T_i$ as follows:
\begin{equation*}
T_i:= \inf\{ t> T_{i-1}: \|S_t- S_{T_{i-1}}\| \ge r_n-1 \}.
\end{equation*}
This is the first exit time of the ball of radius $r_n -1$ centered at $S_{T_{i-1}}$. 
We see that $C$ is included in the event that $T_{b_n} \le  t_1  n $. Furthermore, we also notice that $T_i - T_{i-1}$ form a collection of independent identically distributed random variables. 
Fix some $\kappa>0$ (set to be small), and let $\overline{T}_1$ be the first exit time of the Brownian motion for the ball of radius $\frac{r_n}{\sqrt{1+\kappa}}$ and $\hat{T}_1$ be the first exit time of a ball of radius 1. 
Via the Skorokhod coupling with a Brownian motion, one can assert that there exist some constants $c$ and $\epsilon$ (depending on $\kappa$) such that for $n$ large enough, we have,
\begin{equation} \label{eq:comparT}
\mathbb{P}(T_1 \le u) \le c\exp[- n^{\epsilon}] + \mathbb{P}(3\overline{T}_1 \le u) \le c \exp[-n^{\epsilon}] + \mathbb{P}\left(\frac{3 r_n^2\hat{T}_1}{1+\kappa} \le u\right).
\end{equation}
Let $\hat{T}'_1,\ldots,\hat{T}'_{b_n}$ be a collection of independent random variables with the same distribution as $\hat{T}_1$. Equation \eqref{eq:comparT} allows us to say that we can find a coupling of the variables $\frac{3 r_n^2\hat{T}'_i}{1+\kappa}$ and $T_{i}-T_{i-1}$ such that, except for an event of probability $c \exp[-n^{\epsilon}]$, we have that $T_i -T_{i-1} \ge\frac{3  r_n^2\hat{T}'_i}{1+\kappa}. $ By a union bound, we see that, aside from an event of probability at most $cb_n \exp[-n^{\epsilon}] $, we have that 
$$
T_{b_n}= \sum_{i=1}^{b_n} (T_i - T_{i-1}) \ge \sum_{i=1}^{b_n} \frac{3 r_n^2 \hat{T}'_i}{1+\kappa}.
$$ 
By combining these estimates, we see that we have,
\begin{equation*}
\begin{aligned}
\mathbb{P}(T_{b_n} \le t_1 n) &\le c b_n \exp[-n^{\epsilon}] + \mathbb{P}\left( \frac{1}{b_n} \sum_{i=1}^{b_n} \hat{T}_i' \le \frac{t_1 n(1+\kappa)}{3 b_n r_n^2} \right) \\
&= c b_n \exp[-n^{\epsilon}]  +  \mathbb{P} \left(  \frac{1}{b_n} \sum_{i=1}^{b_n} \hat{T}_i' \le \frac{\eta(1+\kappa)}{a_1^2}\right).
\end{aligned}
\end{equation*}
We can apply the estimate \cite[equation (3.20)]{DemboOkada} to assert that, for any $\lambda>0$, 
\begin{equation*}
\mathbb{P}\left( \frac{1}{b_n} \sum_{i=1}^{b_n} \hat{T}'_i \le\frac{\eta(1+\kappa)}{a_1^2} \right) \le (c_{\kappa} \exp[\lambda^2 (1+\kappa) \eta/(2a_1^2)  - \lambda(1-\kappa)])^{b_n}.
\end{equation*}
If we optimize over the choice of $\lambda$, we would set $\lambda =\frac{1-\kappa}{1+\kappa}\frac{a_1^2}{\eta}$. Thus, we see that,
\begin{equation*}
\begin{aligned}
& (c_{\kappa} \exp[\lambda^2 (1+\kappa) \eta/(2a_1^2)  - \lambda(1-\kappa)])^{b_n} \le (c_{\kappa} \exp[ -(1-\kappa)^2 a_1^2/(2 \eta(1+\kappa)) ])^{b_n}\\
& \le \exp[ - b_n(1-\kappa)^2 a_1^2/(2 \eta(1+\kappa)^2) ] ,
\end{aligned}
\end{equation*}
provided $\eta$ is sufficiently small (depending only on $\kappa$ and $a_i$). 
Indeed, we see that we want $\eta \le \frac{a_i^2 (1-\kappa)^2 \kappa}{2(1+\kappa)^2 \log c_{\kappa}}$.

We now set $\eta$ sufficiently small. Upon substituting $b_n = 2 \eta \log^{(2)} (t_1 n)$, we see that,
for $n$ large enough depending on $a_1$ and $\kappa$, we have
\begin{equation*}
\mathbb{P}(C_1) \le \exp[- a_1^2 (1-\kappa)^2/(1+\kappa)^2 \log^{(2)}(t_1 n)].
\end{equation*}
Thus, for any $\kappa>0$, we eventually have that,
\begin{equation*}
\mathbb{P}( \mathcal{U}^{\mathcal{A}}_{n,\mathcal{T}}) 
\le \mathbb{P}(\hat{\mathcal{U}}^{\mathcal{A}}_{n,\mathcal{T}})
\le \prod_{i=1}^k \exp\left[ - \frac{(1-\kappa)^2}{(1+\kappa)^{2}}\sum_{i=1}^k a_i^2 \log^{(2)}((t_i - t_{i-1})n) \right].
\end{equation*}
This is our desired result.


 

\end{proof}

\begin{proof}[Proof of Corollary \ref{cor:halfLipschitz} ]

Let $4>q>1$ and suppose that we have shown that for the collection of times $t_m =q^m$, that, almost surely, 
\begin{equation} \label{eq:induceq}
|f_{t_m}(x) - f_{t_m}(y)| \le 2 \sqrt{x-y} + \frac{\delta}{4}, \quad \forall x,y \in [0,1],\quad  x \ge y
\end{equation}
once $k$ is sufficiently large.

Now, suppose that $n$ is a time satisfying $q^m \le n \le q^{m+1}$. Then, we have that,
$$
\begin{aligned}
|f_n(x) - f_n(y)|& = |f_{q^{m+1}}(n q^{-(m+1)}x) - f_{q^{m+1}}(n q^{-(m+1)}y)| \frac{\sqrt{q^{m+1} \log^{(2)} q^{m+1}} \log^{(3)} n}{\sqrt{n \log^{(2)} n} \log^{(3)} q^{m+1}}\\&
\le 2 \sqrt{q}|f_{q^{m+1}}(n q^{-(m+1)}x) - f_{q^{m+1}}(n q^{-(m+1)}y)| \le 2\sqrt{q}[2 \sqrt{nq^{-(m+1)}}\sqrt{x-y} + \delta/4]\\
& \le 4\sqrt{q}\sqrt{x-y} +  \sqrt{q}\delta/2  \le 8 \sqrt{x-y} + \delta.
\end{aligned}
$$

To derive the first inequality, we used the fact that $$\log^{(2)} (q^{m+1}) = \log^{(2)} q + \log(m+1) \le 2 \log^{(2)} q + 2 \log m = 2 \log^{(2)} q^m \le 2 \log^{(2)} n,$$ $\log^{(3)} n \le \log^{(3)} q^{m+1}$ and $\sqrt{q^{m+1}} \le \sqrt{q}n.$ 
To get the second inequality, we used  the assumption found in \eqref{eq:induceq}. 

Now, it suffices to establish the inequalities shown in equation \eqref{eq:induceq}. Let $K$ be large enough so that $\sqrt{\frac{8}{K}} < \frac{\delta}{4}$. If we then show that 
\begin{equation}\label{eq:discretftm}
W_m:=\{|f_{t_m}(i K^{-1}) -f_{t_m}(j K^{-1})| \le 2 \sqrt{|i K^{-1}- j K^{-1}|},\quad  \forall i,j\in \{1,\ldots,K\}\}
\end{equation}
holds for all sufficiently large $m$ almost surely, 
then
we indeed have that \eqref{eq:induceq}. 
Indeed, let $i$ be the smallest integer such that $i/K \ge x$ and $j$ be the largest integer such that $y \ge j/K$. We have that $|i K^{-1} - j K^{-1}| \le |x-y| + \frac{2}{K}$ and, thus, by the monotonicity of $f_{t_m}$, we have,
$$
\begin{aligned}
|f_{t_m}(x) - f_{t_m}(y)|&\le |f_{t_m}(i K^{-1}) - f_{t_m}(j K^{-1})| \le 2\sqrt{|iK^{-1} - j K^{-1}| }\\
&\le 2\sqrt{x-y+ \frac{2}{K}} 
\le 2 \sqrt{x-y} + \frac{\frac{4}{K}}{\sqrt{x-y} + 
\sqrt{x-y+ \frac{2}{K}}}\\
& \le 2 \sqrt{x-y} + \sqrt{\frac{8}{K}} \le 2 \sqrt{x-y} + \frac{\delta}{4}.
\end{aligned}
$$

At this point, it suffices to establish equation \eqref{eq:discretftm}. We see that  $|f_{t_m}(i K^{-1}) - f_{t_m}(j K^{-1})| \ge 2 \sqrt{{|i K^{-1}-j K^{-1}|}}$ is included in the event $\mathcal{U}^{0,2}_{t_m, jK^{-1},iK^{-1} }.$ By Lemma \ref{lem:Uuprbnd}, we see that the probability of this event is less than $\exp[- 3 \log^{(2)} t_m]$ when $m$ is sufficiently large (we set $\kappa$ small enough so that $4 \frac{(1-\kappa)^2}{(1+\kappa)^2}<3)$. 
We have that
$$
\mathbb{P}(W_m^c)
\le  \mathbb{P}(\cup_{i,j \in \{1,\ldots,K\}}\mathcal{U}^{0,2}_{t_m, jK^{-1},iK^{-1}})\le\sum_{m=0}^{\infty} K^2 \exp[- 3 \log^{(2)} q^m] < \infty,
$$
so by the Borel-Cantelli lemma, for $m$ large enough,  $W_m^c$ will eventually stop occurring. This establishes equation \eqref{eq:discretftm}.

\end{proof}

Our final theorem for this section will establish that any function that is not found in $\mathcal{S}$ cannot be a limit point using Corollary \ref{cor:halfLipschitz} and Lemma \ref{lem:Uuprbnd}.  

\begin{thm}
Let $F$ be any function $\not \in \mathcal{S}$. Namely, it satisfies
\begin{equation*}
\int_{0}^1 (F'(t))^2 \text{d}t >1,
\end{equation*}
or it is either discontinuous or not monotone.
Define $\mathcal{N}^{f}_{\epsilon}$ to be,
$$
\mathcal{N}^f_{\epsilon}:= \{g:[0,1]\to [0,\infty), \|g-f\|_{\infty}< \epsilon \}. 
$$

There exists some $\epsilon>0$ such that the event $f_n \not \in \mathcal{N}^{F}_{\epsilon}$ will occur a.s. for all sufficiently large $n$. 
\end{thm}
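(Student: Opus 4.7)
The plan is to combine the sample-path regularity of Corollary \ref{cor:halfLipschitz} with the upper-tail estimate of Lemma \ref{lem:Uuprbnd} through a partition argument, invoke Borel--Cantelli along a geometric subsequence, and then interpolate to all $n$. First I would reduce to the ``generic'' obstruction. Since $f_n$ is exactly non-decreasing with $f_n(0) = 0$ and (by Corollary \ref{cor:halfLipschitz}) eventually satisfies $|f_n(x) - f_n(y)| \le 8 \sqrt{|x - y|} + \delta$ uniformly, any $F$ that fails to be non-decreasing or to vanish at $0$ is separated from every sufficiently large $f_n$ in $\|\cdot\|_\infty$ by an explicit $\epsilon > 0$, and a discontinuous $F$ is ruled out because the $\sqrt{\cdot}$-modulus cannot approximate a jump. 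So I may assume $F \in C([0,1])$ is non-decreasing, $F(0) = 0$, and $F \notin \mathcal{S}$.

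Second, I would select a partition $0 = t_0 < t_1 < \cdots < t_k = 1$ and an $\eta > 0$ with
\begin{equation*}
\sum_{i=1}^k \frac{(F(t_i) - F(t_{i-1}))^2}{t_i - t_{i-1}} > 1 + 3 \eta.
\end{equation*}
This is possible: if $F$ is absolutely continuous with $\int_0^1 (F')^2 \, dx > 1$, the left-hand side is the Riemann-type sum for $\int_0^1 (F')^2$, which converges to a value exceeding $1$ as the mesh shrinks; if instead $dF$ has a nontrivial singular part, a Cauchy--Schwarz estimate on a fine partition concentrated near the singular support forces the left-hand side to be arbitrarily large. Fix $\epsilon > 0$ so small that $A_i := F(t_i) - F(t_{i-1}) - 2 \epsilon \ge 0$ for each $i$ and $\sum_i A_i^2/(t_i - t_{i-1}) > 1 + 2 \eta$. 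If $\|f_n - F\|_\infty < \epsilon$, then $R_{t_i n} - R_{t_{i-1} n} \ge A_i h_3(n)$, whence $R[t_{i-1} n + 1, t_i n] \ge A_i h_3(n)$ by monotonicity. Since $h_3((t_i - t_{i-1}) n)/h_3(n) \to \sqrt{t_i - t_{i-1}}$, this inclusion upgrades, for $n$ large, to $\mathcal{U}^{\mathcal{A}}_{n, \mathcal{T}}$ with parameters $a_i$ arbitrarily close to $A_i/\sqrt{t_i - t_{i-1}}$, and in particular satisfying $\sum_i a_i^2 > 1 + \eta$.

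Applying Lemma \ref{lem:Uuprbnd} with $\kappa > 0$ chosen so that $(1-\kappa)^2 (1 + \eta)/(1+\kappa)^2 > 1 + \eta/2$ and using $\log_2((t_i - t_{i-1}) n)/\log_2 n \to 1$ yields
\begin{equation*}
\mathbb{P}(\|f_n - F\|_\infty < \epsilon) \le \exp\bigl[ -(1 + \eta/2) \log_2 n \bigr] = (\log n)^{-(1 + \eta/2)}.
\end{equation*}
These are summable along any geometric subsequence $n_m = q^m$, so Borel--Cantelli yields $\|f_{q^m} - F\|_\infty \ge \epsilon$ eventually, almost surely. To extend to all $n$ I would use the rescaling identity
\begin{equation*}
f_n(x) = \frac{h_3(q^{m+1})}{h_3(n)} \, f_{q^{m+1}}\bigl( n q^{-(m+1)} x \bigr),
\end{equation*}
already exploited in Corollary \ref{cor:halfLipschitz}: for $n \in [q^m, q^{m+1}]$ the prefactor tends to $1$ uniformly and the argument $n q^{-(m+1)} \in [q^{-1}, 1]$ is near-identity as $q \downarrow 1$; combined with the $\sqrt{\cdot}$-modulus, this gives $\sup_{q^m \le n \le q^{m+1}} \|f_n - f_{q^{m+1}}\|_\infty \to 0$. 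Choosing $q$ close enough to $1$, the inequality $\|f_n - F\|_\infty < \epsilon/2$ would force $\|f_{q^{m+1}} - F\|_\infty < \epsilon$, contradicting the subsequence conclusion, so $f_n \notin \mathcal{N}^F_{\epsilon/2}$ eventually.

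The main obstacle is this final passage from the geometric subsequence to every $n$: because $(\log n)^{-(1 + \eta/2)}$ is not summable over all $n$, Borel--Cantelli cannot be applied directly, and one must exploit both the exact rescaling structure of $f_n$ and the uniform sample-path modulus of Corollary \ref{cor:halfLipschitz}. A secondary but unavoidable point is the handling of $F$ with a singular component, where one argues via the singular part of $dF$ that the partition energy sum diverges; this is measure-theoretically routine but is where the broader definition of $\mathcal{S}$ (not merely an energy constraint among absolutely continuous functions) enters.
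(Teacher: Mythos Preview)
Your proposal is correct and follows essentially the same route as the paper: reduce via Corollary~\ref{cor:halfLipschitz} to continuous monotone $F$, choose a partition with energy sum exceeding $1$, embed $\{f_n\in\mathcal{N}^F_\epsilon\}$ into the event $\mathcal{U}^{\mathcal{A}}_{n,\mathcal{T}}$, apply Lemma~\ref{lem:Uuprbnd} and Borel--Cantelli along $q^m$, then interpolate. The only notable difference is in the interpolation step: the paper compares $f_n$ to $f_{\tau_m}$ with $\tau_m\le n$ and controls the argument shift via the uniform continuity of $F$, whereas you compare to $f_{q^{m+1}}$ and control the shift via the sample-path modulus from Corollary~\ref{cor:halfLipschitz}; both are valid and the difference is cosmetic.
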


\begin{proof}
By applying Corollary \ref{cor:halfLipschitz}, we can quickly see that the function $F$ must be continuous and monotone increasing if we want $f_n \in \mathcal{N}^F_{\epsilon}$ for some appropriate $\epsilon$. Since $f_n$ is monotone increasing, the supposed limit $F$ must be monotone increasing as well. (Indeed, if there were $x$,$y$ such that $x>y$ but $F(x) <F(y)$, we could let $\epsilon = \frac{F(x)-F(y)}{3}$, and it would be trivial that no $f_n \in \mathcal{N}^{F}_{\epsilon}.$) Furthermore, we may also assume that $F$ is continuous. (We show it by contradiction. If there were a point $p$ such that  $\lim_{x \to p^{+}} F(x) \ne \lim_{x \to p^-} F(x) $, we could let $\epsilon =\frac{\lim_{x \to p^{+}} F(x)- \lim_{x \to p^-} F(x)}{3}. $ The continuity properties detailed in Corollary \ref{cor:halfLipschitz} ensure that, eventually, no function $f_n(x)$ could lie in $\mathcal{N}^{F}_{\epsilon}$. (If $f_n(x) \in \mathcal{N}^{F}_{\epsilon}$, then 
$$|\lim_{x \to p^{+}}f_n(x) - \lim_{x \to p^-}f_n(x)| \le |\lim_{x \to p^{+}}F(x) - \lim_{x \to p^{-}}F(x)| - 2 \epsilon/3 >\epsilon/3. $$
In Corollary \ref{cor:halfLipschitz}, we could set $\delta<\epsilon/3$ and arrive at a contradiction.

At this point, we only need to demonstrate that the function $F$ must be absolutely continuous and, provided it is absolutely continuous, we have $\int_0^1 (F'(t))^2 \text{d}t \le  1$. The proof that $F$ must be absolutely continuous is very similar to the proof of $\int_0^1 (F'(t))^2 \text{d}t \le  1.$ We will present the details for the proof that  $\int_0^1 (F'(t))^2 \text{d}t \le  1,$ and then present the changes necessary to derive that $F$ must be absolutely continuous.

Set $q>1$ and let $\tau_m = q^m$. 
We will first argue that, almost surely, for sufficiently large $m$, it must be the case that $f_{\tau_m} \not \in \mathcal{N}_{\epsilon}^F$. We first remark that if $\int_0^1 (F'(t))^2 \text{d} t>1$, there must be some discretization and  $\kappa >0,\epsilon>0$ such that,
\begin{equation} \label{eq:consfintge1}
\sum_{i=1}^K \frac{(1-\kappa)^2}{(1+\kappa)^2}(t_{i} - t_{i-1}) \left(\frac{\max(F(t_i) - F(t_{i-1}) -\epsilon,0)}{t_i - t_{i-1}} \right)^2 >1.
\end{equation}
We remark first that if it were the case that $f_n \in \mathcal{N}^{F}_{\epsilon}$, then it necessarily must be the case that,
\begin{equation*}
f_n(t_i) - f_n(t_{i-1}) \ge F(t_i) - F(t_{i-1}) -  2\epsilon.
\end{equation*}
As such, it must be the case that,
\begin{equation} \label{eq:Diffca}
\begin{aligned}
R[1,t_i \tau_m] - R[1,t_{i-1}\tau_m] &\ge [F(t_i) - F(t_{i-1}) - 2\epsilon] h_3(\tau_m)\\& \ge \frac{F(t_i) - F(t_{i-1}) - 2 \epsilon}{\sqrt{t_i - t_{i-1}}}
h_3((t_i - t_{i-1})\tau_m) .
\end{aligned}
\end{equation}
The last inequality holds for $m$ sufficiently large (where we use the fact that $\frac{\sqrt{\log^{(2)} x}}{\log^{(3)} x}$ is increasing when $x$ is sufficiently large).

We now define $a_i$ to be $\frac{\max(F(t_i) - F(t_{i-1}) - 2 \epsilon,0)}{\sqrt{t_i - t_{i-1}}}$ and consider the event $\mathcal{U}^{\mathcal{A}}_{\tau_m,\mathcal{T}}$. Due to equation \eqref{eq:Diffca}, $f_{\tau_m} \in \mathcal{N}^f_{\epsilon}$ would imply that $f_{\tau_m} \in \mathcal{U}^{\mathcal{A}}_{\tau_m,\mathcal{T}}$. 
From Lemma \ref{lem:Uuprbnd}, we see that,
$$
\mathbb{P}(\mathcal{U}^{\mathcal{A}}_{\tau_m,\mathcal{T}}) \le \exp\left[- \sum_{i=1}^k \frac{(1-\kappa)^2}{(1+\kappa)^2} a_i^2 \log^{(2)}((t_i -t_{i-1})\tau_m) \right].
$$
By our definition of $a_i$, we see that $\sum_{i=1}^k \frac{(1-\kappa)^2}{(1+\kappa)^2}a_i^2 >1$.  Thus, we would have that 
$$
\sum_{i=1}^{\infty } \exp\left[- \sum_{i=1}^k \frac{(1-\kappa)^2}{(1+\kappa)^2} a_i^2 \log^{(2)}((t_i -t_{i-1})\tau_m) \right] < \infty.
$$
By the Borel-Cantelli lemma, we can ensure that, eventually, the events $\mathcal{U}^{\mathcal{A}}_{\tau_m,\mathcal{T}}$ will no longer occur.

Now, we will deal with general times $n$.  Note that since $F$ is a continuous function, by appealing to uniform continuity, there exists some $q>1$ such that $|F(t) - F(yt)| < \epsilon/4$ for all $t\in [0,1]$ and all $y \in [1,q]$. In addition, we remark that there is $q>1$ sufficiently close to $1$ such that,
\begin{equation} \label{def:Rntiln}
L(n,\tilde{n}):=1 - \frac{\frac{\sqrt{n \log^{(2)} n}}{\log^{(3)} n}}{\frac{\sqrt{\tilde{n} \log^{(2)} \tilde{n}}}{\log^{(3)} \tilde{n}}} ,
\end{equation}
satisfies $|L(n,\tilde{n})|< \epsilon/8$, whenever $n q^{-1} \le\tilde{n}< q n$. 
Furthermore, by \cite[Theorem 1.1]{DemboOkada} on the $\limsup$, we may assume that we have the bound $f_n(t) \le 2$ for any $t\in[0,1]$. 

For a general time $n$, let $\tau_m$ correspond to the largest $m$ such that $\tau_m <n$. We see that we have, for $q$ sufficiently close to $1$,
\begin{equation*}
\begin{aligned}
|f_n(t) - F(t)| &= \left|f_{\tau_m}\left(\frac{n}{\tau_m}t\right) + f_{n}(t)L(n,\tau_m) - F\left(\frac{n}{\tau_m}t\right) +\left(F\left(\frac{n}{\tau_m}t\right) - F(t)\right)\right|\\
& \ge \left|f_{\tau_m}\left(\frac{n}{\tau_m}t\right) - F\left(\frac{n}{\tau_m}t\right) \right| -f_n(t)|L(n,\tau_m)| - \left|F\left(\frac{n}{\tau_m}t\right) - F(t) \right|\\
& \ge \left|f_{\tau_m}\left(\frac{n}{\tau_m}t\right) - F\left(\frac{n}{\tau_m}t\right) \right| - \frac{\epsilon}{2}.
\end{aligned}
\end{equation*}

Here, we used the fact that $\frac{n}{\tau_m}<q$ , so that we could apply the estimates in equation \eqref{def:Rntiln} and the line that precedes it.  Thus, we see that if, for some $n$, we have $f_n \in \mathcal{N}^F_{\epsilon/2}$, then we would have $f_{\tau_m} \in \mathcal{N}^{F}_{\epsilon}$.

We now mention some of the modifications necessary to show that $F$ must be an absolutely continuous function. Notice that if $F$ is not absolutely continuous, then we must be able to find some $\delta >0$ such that for any $\epsilon$, there will be a union of disjoint intervals $\cup_{i=1}^k [x_i, y_i]$ such that we have,
\begin{equation*}
\begin{aligned}
& \delta \le \sum_{i=1}^k |F(x_i) - F(y_i)| \\
& \epsilon \ge \sum_{i=1}^k |x_i - y_i|.
\end{aligned}
\end{equation*}
By the Cauchy-Schwartz inequality, we see
\begin{equation*}
\delta^2 \le (\sum_{i=1}^k |F(x_i) - F(y_i)|)^2 \le \left(\sum_{i=1}^k \frac{|F(x_i) - F(y_i)|^2}{|x_i - y_i|} \right) (\sum_{i=1}^k |x_i - y_i|).
\end{equation*}
Thus, we have that,
\begin{equation*}
\frac{\delta^2}{\epsilon} \le \sum_{i=1}^k |x_i -y_i| \frac{(F(x_i) - F(y_i))^2}{(x_i - y_i)^2}.
\end{equation*}

By taking $\epsilon$ sufficiently small, the right-hand side will become greater than 1. This will now resemble the condition found in equation \eqref{eq:consfintge1} and we can proceed along the same argument to derive a contradiction.
\end{proof}

\subsection{Elements in $\mathcal{S}$ are points in $C(\{f_n\})$}

In this subsection, we will establish the following main theorem using Corollary \ref{cor:halfLipschitz} and Proposition \ref{mainpro1}.

\begin{thm} \label{thm:neinfoft}
Let $F \in \mathcal{S}$. Thus, $F$ is a continuous monotone increasing function such that  $\int_{0}^1 (F'(t))^2 \text{d}t \le 1$. Recall the neighborhood,

\begin{equation*}
\mathcal{N}^{F}_{\epsilon}:= \{g:[0,1]\to [0,\infty), \|g- F\|_{\infty} \le \epsilon\}.
\end{equation*}
For any $\epsilon>0$, it must be the case that $f_n \in \mathcal{N}^F_{\epsilon}$ infinitely often.

\end{thm}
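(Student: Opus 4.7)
The plan is to deduce the statement from Proposition \ref{mainpro1} (capacity concentration given first-coordinate control) after a discretization of $F$, using Corollary \ref{cor:halfLipschitz} to upgrade finitely-many-point matching to sup-norm closeness, and using the classical Strassen LIL on the first-coordinate walk $S^1$ to produce the required events infinitely often.

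First, I would fix $\epsilon>0$ and construct a partition $0=t_0<t_1<\cdots<t_k=1$ with mesh small enough that any continuous nondecreasing function agreeing with $F$ at each $t_i$ up to error $\epsilon/4$ and satisfying the half-Lipschitz bound of Corollary \ref{cor:halfLipschitz} lies in $\mathcal N^F_\epsilon$. Set $a_i:=(F(t_i)-F(t_{i-1}))/\sqrt{t_i-t_{i-1}}$. A Cauchy--Schwarz application to $F(t_i)-F(t_{i-1})=\int_{t_{i-1}}^{t_i}F'(s)\,ds$ combined with $\int_0^1(F')^2\,ds\le 1$ gives $\sum_{i=1}^k a_i^2\le 1$, which is the key admissibility bound that makes the Strassen argument work.

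Next, I would show that on the event $\mathcal E^{\mathcal A,\tilde\delta}_{n,\mathcal T}$ one has $f_n\in\mathcal N^F_\epsilon$ for all large $n$, once $\tilde\delta$ is chosen small. Indeed, $h_3((t_i-t_{i-1})n)/h_3(n)\to\sqrt{t_i-t_{i-1}}$ (the $\log_2$ and $\log_3$ factors match in the limit), so the increment bound $a_i(1\pm\tilde\delta)h_3((t_i-t_{i-1})n)$ on $R_{t_in}-R_{t_{i-1}n}$ translates, after summing, into $|f_n(t_i)-F(t_i)|\le C\tilde\delta$ at each partition point. The $f_n$'s are monotone, and Corollary \ref{cor:halfLipschitz} provides the half-Lipschitz regularity $|f_n(x)-f_n(y)|\le 8\sqrt{|x-y|}+\delta$ almost surely for all large $n$, so fineness of the partition plus discrete closeness yields sup-norm closeness to the continuous $F$.

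To produce $\mathcal F^{\mathcal A,\delta}_{n,\mathcal T}$ infinitely often, I would apply the classical Strassen functional LIL to the one-dimensional walk $S^1$, whose step distribution has variance $1/3$; hence $\phi(n)=\sqrt{(2/3)n\log_2 n}$ is precisely the Strassen normalization, and the limit set of $t\mapsto S^1_{tn}/\phi(n)$ is $\mathcal S$. Since $F\in\mathcal S$ by hypothesis, a.s.\ there are infinitely many $n$ with $\|S^1_{\cdot n}/\phi(n)-F\|_\infty<\delta'$, and restricting to the partition points yields $\mathcal F^{\mathcal A,\delta}_{n,\mathcal T}$ i.o.

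The main obstacle is the last step: combining $\{\mathcal F^{\mathcal A,\delta}_{n,\mathcal T}\text{ i.o.}\}$ with $\mathbb P(\mathcal E^{\mathcal A,\tilde\delta}_{n,\mathcal T}\mid \mathcal F^{\mathcal A,\delta}_{n,\mathcal T})=1-o(1)$ into $\{\mathcal E^{\mathcal A,\tilde\delta}_{n,\mathcal T}\cap \mathcal F^{\mathcal A,\delta}_{n,\mathcal T}\text{ i.o.}\}$ a.s., since a pointwise $o(1)$ conditional failure need not sum up. I would handle this by passing to a very sparse subsequence $n_m$ (e.g.\ $n_m=\exp\exp m$). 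On such a subsequence, a standard "fresh increments" construction shows that a Strassen-type event on each block $[n_{m-1},n_m]$ is sufficient for $\mathcal F^{\mathcal A,\delta}_{n_m,\mathcal T}$, and these block events are independent; by the block form of the Strassen LIL (or a direct Gaussian/Skorokhod computation analogous to Lemma \ref{est:lem1}) one still has $\mathcal F^{\mathcal A,\delta}_{n_m,\mathcal T}$ i.o. At the same time, along a sufficiently sparse $n_m$ the conditional failure probabilities from Proposition \ref{mainpro1} can be made summable, so the first Borel--Cantelli lemma gives $\mathbb P(\mathcal F^{\mathcal A,\delta}_{n_m,\mathcal T}\cap(\mathcal E^{\mathcal A,\tilde\delta}_{n_m,\mathcal T})^c\text{ i.o.})=0$, whence $\mathcal E^{\mathcal A,\tilde\delta}_{n_m,\mathcal T}$ holds for all but finitely many of the infinitely many "good" $m$, completing the proof. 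A Hewitt--Savage $0$-$1$ law can be used to upgrade positive probability to probability one if needed.
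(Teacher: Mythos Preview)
Your approach is viable in spirit but differs from the paper's in one crucial structural respect, and this difference is what creates the difficulty you spend your last paragraph wrestling with.

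The paper never tries to combine an almost-sure ``$\mathcal{F}$ i.o.'' statement with the conditional $1-o(1)$ from Proposition~\ref{mainpro1}. Instead it passes to the subsequence $\tau_m=m^m$ \emph{first}, defines the auxiliary process $x_{\tau_m}(t)=R[\tau_{m-1},\tau_{m-1}+(\tau_m-\tau_{m-1})t]/h_3(\tau_m-\tau_{m-1})$ on the \emph{disjoint} block $[\tau_{m-1},\tau_m]$, and shows (i) $x_{\tau_m}\in\mathcal N^F_\epsilon$ forces $f_{\tau_m}\in\mathcal N^F_{2\epsilon}$ for large $m$, and (ii) $\mathcal{E}$ on the block implies $x_{\tau_m}\in\mathcal N^F_\epsilon$. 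Proposition~\ref{mainpro1} is then used only as a \emph{probability inequality}: $\mathbb P(\mathcal E)\ge\mathbb P(\mathcal E\cap\mathcal F)=(1-o(1))\,\mathbb P(\mathcal F)$. The quantity $\mathbb P(\mathcal F)$ is computed directly by Gaussian tail estimates (no appeal to Strassen's LIL for $S^1$), yielding $\sum_m\mathbb P(x_{\tau_m}\in\mathcal N^F_\epsilon)=\infty$. Since the block events are genuinely independent, the second Borel--Cantelli lemma applies immediately.

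Your route---invoking Strassen for $S^1$ to get $\mathcal F$ i.o., then grafting on the conditional $1-o(1)$---forces you into the sparsification argument, and there is a real gap there: Proposition~\ref{mainpro1} gives no rate on the $o(1)$, so you cannot simply declare that along $n_m=\exp\exp m$ the conditional failure probabilities are summable. You would have to choose $n_m$ adaptively (pick $n_m$ so large that the conditional failure is $<2^{-m}$, then further sparsify if needed to recover the block-independence used for the ``$\mathcal F$ i.o.'' part). This can be done, but your sketch does not do it. A second, smaller issue: deducing the \emph{multiplicative} two-sided event $\mathcal F$ from Strassen's additive sup-norm control breaks down when some $a_i=(F(t_i)-F(t_{i-1}))/\sqrt{t_i-t_{i-1}}$ vanishes or is very small; you would need to first perturb $F$ to a nearby element of $\mathcal S$ with strictly positive derivative. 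The paper's direct probability computation sidesteps both issues.
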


\begin{proof}
We first consider the sequence of times $\tau_m = m^m$ and argue that if it is the case that,
$$
x_{\tau_m}(t):= 
\frac{R[\tau_{m-1}, \tau_{m-1}+(\tau_m - \tau_{m-1})t]}{
h_3(\tau_{m}-\tau_{m-1}) },
$$
belongs to $\mathcal{N}^{F}_{\epsilon}$
then it must also be the case that $f_{\tau_m}$ belongs to $\mathcal{N}^{F}_{2 \epsilon}$ for $m$ sufficiently large. 
We define,
\begin{equation*}
y_m(t):=\frac{R_{\tau_m t} }{h_3(\tau_{m}-\tau_{m-1})}-x_{\tau_m}(t).
\end{equation*}
First, notice that, 
\begin{equation*}
|f_{\tau_m}(t) - F(t)| = |x_{\tau_m}(t) -F(t) + y_m(t) + f_{\tau_m}(t) L(\tau_m, \tau_m - \tau_{m-1}) |,
\end{equation*}
where $L(\cdot,\cdot)$ is from equation \eqref{def:Rntiln} and
\begin{equation*}
\begin{aligned}
&|y_m(t)| = \frac{|(-R_{[\tau_{m-1},\tau_m t + (1-t) \tau_{m-1}]} + R_{\tau_{m}t +(1-t) \tau_{m-1}}) +(R_{\tau_m t} - R_{\tau_m t +(1-t) \tau_{m-1}}) |}{h_3(\tau_m - \tau_{m-1})}\\
& \le \frac{|-R_{[\tau_{m-1},\tau_m t + (1-t) \tau_{m-1}]} + R_{\tau_{m}t +(1-t) \tau_{m-1}}|}{h_3(\tau_m - \tau_{m-1})} + \frac{|R_{\tau_m t} - R_{\tau_m t +(1-t) \tau_{m-1}} |}{h_3(\tau_m - \tau_{m-1})}\\
& \le \frac{R_{\tau_{m-1}}}{h_3(\tau_m)} \frac{h_3(\tau_m)}{h_3(\tau_m - \tau_{m-1})} + \frac{|R_{\tau_mt} - R_{\tau_m t + (1-t) \tau_{m-1}}|}{h_3(\tau_m)} \frac{h_3(\tau_m)}{h_3(\tau_m - \tau_{m-1})}.
\end{aligned}
\end{equation*}
By using Corollary \ref{cor:halfLipschitz}, we see that eventually, almost surely, we see that the ratios $\frac{R_{\tau_{m-1}}}{h_3(\tau_m)}$ and $\frac{|R_{\tau_m t} - R_{\tau_m t +(1-t) \tau_{m-1}}|}{h_3(\tau_m)}$ can be made arbitrarily small. (Here, we remark that the ratio $\frac{R_{\tau_m t}}{h_3(\tau_m)} = f_{\tau_m}(t)$ ,$\frac{R_{\tau_m t +(1-t) \tau_{m-1}}}{h_3(\tau_m)} = f_{\tau_m}(t + (1-t) \frac{\tau_{m-1}}{\tau_m}) $, and $ \frac{R_{\tau_{m-1}}}{h_3(\tau_m)} = f_{\tau_m}(\frac{\tau_{m-1}}{\tau_m})$.) Furthermore, the ratio $\frac{h_3(\tau_m)}{h_3(\tau_m -\tau_{m-1})}$ will go to $1$ as $m$ is taken to $\infty$. Here, we use the fact that $\lim_{m \to \infty} \frac{\tau_m}{\tau_m - \tau_{m-1}} = 1$. Furthermore, Corollary \ref{cor:halfLipschitz} would also tell us that almost surely for $m$ large enough, we have that $f_{\tau_m}(t) \le 2.$ 

In addition, using the logic following equation \eqref{def:Rntiln}, we see furthermore that $L(\tau_m,\tau_m - \tau_{m-1}) \to 0$. Thus, for $m$ sufficiently large, we have both,
$$
|y_m(t)| \le \frac{\epsilon}{2},\quad |f_{\tau_m}(t)L(\tau_m,\tau_m - \tau_{m-1})| \le \frac{\epsilon}{2}.
$$
Hence, we have for $m$ large enough that,
$$
|x_{\tau_m}(t) -F(t) + y_m(t) + f_{\tau_m}(t) L(\tau_m, \tau_m - \tau_{m-1}) |\le \epsilon + \frac{\epsilon}{2} + \frac{\epsilon}{2} \le 2 \epsilon.
$$

At this point, we will try to analyze the properties of the function $x_{\tau_m}$ instead of $f_{\tau_m}$. We remark that due to the assumption that $\int_{0}^1 (F'(t))^2 \text{d}t \le 1$, for any interval $I=[c,d]$ of size $\ell$, we have that 
$$
|I| 
\ge |I|\int_{I} (F'(t))^2 \text{d}t 
\ge \left(\int_{I} F'(t) \text{d}t\right)^2,
$$
and, thus,
\begin{equation}\label{eq:diffF}
F(d) - F(c) = \int_{I} F'(t)\text{d}t \le \sqrt{|I|}.
\end{equation}
We choose $K$ large enough such that $\frac{1}{K} \le \frac{\epsilon^2}{64^2}$ and consider the sequence of times $t_i = \frac{i}{K}$. We consider
$$
a_i = \frac{F(t_{i}) - F(t_{i-1}) - \frac{\epsilon}{4K}}{\sqrt{t_i -t_{i-1}}}.
$$
Notice that $a_i \le 1$ by equation \eqref{eq:diffF}. 

On the event 
$\mathcal{E}^{\mathcal{A},\frac{\epsilon}{8K}}_{\tau_m-\tau_{m-1},\mathcal{T}}(S_{[\tau_{m-1},\tau_m]})$ in \eqref{def:E1},  we have that, 
\begin{equation*}
\begin{aligned}
& R[\tau_{m-1},(K-J)K^{-1}\tau_{m-1} + J K^{-1}\tau_m]\\
= &\sum_{i=1}^J 
R[\tau_{m-1},(K-i)K^{-1}\tau_{m-1} + i K^{-1}\tau_m] - R[\tau_{m-1},(K-i-1)K^{-1}\tau_{m-1} + (i-1) K^{-1}\tau_m] \\
\le &  \sum_{i=1}^J a_i \left(1 + \frac{\epsilon}{8K}\right) 
h_3((t_i - t_{i-1}) (\tau_m - \tau_{m-1}))
\le  \sum_{i=1}^J h_3( \tau_m-\tau_{m-1}) (F(t_i) - F(t_{i-1}))
=  F(t_J) h_3(n),
\end{aligned}
\end{equation*}
for $m$ sufficiently large, as well as,

\begin{equation*}
\begin{aligned}
R[\tau_{m-1},(K-J)K^{-1}\tau_{m-1} + J K^{-1}\tau_m]
& \ge \sum_{i=1}^J  a_i \left(1- \frac{\epsilon}{8K}\right) h_3((t_i - t_{i-1})(\tau_m -\tau_{m-1})) \\
& \ge \sum_{i=1}^J h_3(\tau_m - \tau_{m-1}) \left(F(t_i) - F(t_{i-1}) - \frac{\epsilon}{2K} \right)\\
&=h_3(\tau_m - \tau_{m-1}) \left(F(t_J) - \frac{J \epsilon}{2K} \right),
\end{aligned}
\end{equation*}
again, for $m$ sufficiently large.

We see that along the times $t_i$, we must have that $|x_{\tau_m}(t_i) - F(t_i)| \le \frac{\epsilon}{2}$. 
For any time $t$ between $t_i < t <t_{i-1}$, the control from equation \eqref{eq:diffF} shows that,
$|F(t) - F(t_i)| \le \frac{\epsilon}{64}$. Furthermore, Corollary  \ref{cor:halfLipschitz} with $\delta = \epsilon/8$ shows that $|x_{\tau_{m}}(t) - x_{\tau_m}(t_{i-1})| \le 8 \epsilon/64+ \epsilon/8 = \epsilon/4.$ 
Thus, we must have that $|x_{\tau_m}(t) - F(t)|\le \epsilon$ provided the event $\mathcal{E}^{\mathcal{A},\frac{\epsilon}{8K}}_{\tau_m-\tau_{m-1},\mathcal{T}}(S_{[\tau_{m-1},\tau_m]})$ holds. 


By the Skorokhod Embedding (see Lemma 3.1 in \cite{LA91}), the probability of  $\mathcal{F}^{\mathcal{A},\frac{\epsilon}{8K}}_{\tau_m-\tau_{m-1},\mathcal{T}}(S_{[\tau_{m-1},\tau_m]})$ is

$$
\begin{aligned}
\exp\left[ -\sum_{i=1}^{K}  a_i^2 \log^{(2)}( (\tau_m -\tau_{m-1})K^{-1}) \right] &= \exp \left[ -\log^{(2)}((\tau_m - \tau_{m-1}) K^{-1}) \sum_{i=1}^{K} \frac{(F(t_i)- F(t_{i-1}) - \epsilon/(2K))^2}{t_i - t_{i-1}}  \right]\\& > \exp\left[-\log^{(2)}( m^m K^{-1}) (1-\chi) \int_0^1 (F'(t))^2 \text{d}t\right] > \frac{C}{m^{1-\chi}},
\end{aligned}
$$
where $\chi$ is some constant $>0$ and $C$ is some constant$>0$. 
Thus, by Proposition \ref{mainpro1}, 
\begin{align*}
\sum_{m=1}^{\infty}\mathbb{P}
(|x_{\tau_m}(t) - F(t)|\le \epsilon)
\ge &\sum_{m=1}^{\infty}\mathbb{P}
( \mathcal{E}^{\mathcal{A},\frac{\epsilon}{8K}}_{\tau_m-\tau_{m-1},\mathcal{T} }(S_{[\tau_{m-1},\tau_m]}))\\
\ge & c 
\sum_{m=1}^{\infty}\mathbb{P}(\mathcal{F}^{\mathcal{A},\frac{\epsilon}{8K}}_{\tau_m-\tau_{m-1},\mathcal{T}}(S_{[\tau_{m-1},\tau_m]}))=\infty.
\end{align*}
Noting that the events $\{|x_{\tau_m}(t) - F(t)|\le \epsilon\}$ are independent for different $m$, by the second Borel-Cantelli lemma, the event $\{|x_{\tau_m}(t) - F(t)|\le \epsilon\}$ will occur infinitely often in $m$ almost surely, that is, $f_n \in \mathcal{N}_\epsilon^F$ will occur infinitely often  almost surely.
\end{proof}

\section{$\liminf$ estimates for $R_n$ }

In this section, we consider the following type of sets, 
for $0\le t_1\le t_2\le \ldots \le t_k \le 1$, 
\begin{equation}\label{eq:deftildE}
 \tilde{E}^{\mathcal{A},\delta}_{n,\mathcal{T}} 
 = \bigcap_{i=1}^k  \{ (a_i- \delta) \hat{h}_3(n)  \le R_{t_i n} \le a_i \hat{h}_3(n)  \},
\end{equation}
where $a_1 \le a_2 \le \ldots  \le a_k$. 
We only have to show the main result for $\mathcal{K}$ restricted by $h:[0,1]\to [0, \infty)$. 
Indeed, if we can show it for $h:[0,1]\to [0, \infty)$, then the same proof holds for a function $h:[0,T] \to [0,\infty)$ for any arbitrarily large, but finite time $T$. Once, we have achieved this step, we can easily extend it to $h:[0,\infty) \to [0,\infty)$ by noting for any increasing function $h$ with limit $\infty$ at $\infty$, there is $T$ sufficiently large such that $\left|\frac{h(t)}{1+h(t)} - 1\right| \le \frac{\epsilon}{2}$ for $t \ge T$. Thus, if we know that $f$ is a function such that the distance from $f$ to $h$ is less than $\epsilon/2$ in the topology defined on functions from $[0,T] \to [0,\infty)$, then the distance from $f$ to $h$ will be less than $\epsilon$ on  the topology defined on functions from $[0,\infty) \to [0,\infty)$.


\subsection{Lower bound estimates on $\mathbb{P}( \tilde{E}^{\mathcal{A},\delta}_{n,\mathcal{T}})$}

The Brownian capacity of $D \subset \RR^3$ can be defined by
\begin{align*}
\ca_B(D)^{-1}
:= \inf \bigg\{ \int\int G_B(x-y) \mu(dx) \mu(dy): \mu(D)=1 \bigg\}, \,
\end{align*}
where $G_B(\cdot)$ is the Green's function for the Brownian motion. 
The capacity of a random walk can be compared to the Brownian capacity of a Brownian motion. From \cite[Theorem 1.7]{Ch17} and  from \cite[Lemma 3.6]{DemboOkada}, there exists a coupling between the random walk and the Brownian motion for $d=3$ such that the following holds almost surely,

\begin{equation} \label{eq:coupl1}
\lim_{n \to \infty} \frac{R_n}{\ca_B(B[0,n/3])} = \frac{1}{3},
\end{equation}
If we are only dealing with a finite collection of points $t_1,\ldots, t_k$, we may also assume that,
\begin{equation*}
\lim_{n \to \infty} \frac{R_{t_i n}}{\ca_B(B[0,t_in/3])} = \frac{1}{3},
\end{equation*}
uniformly. Using this coupling, we can instead deal with the Brownian motion when trying to deduce properties of the $\liminf$ LIL for the random walk.
If we write $\mathcal{A}=(a_1,\ldots, a_k)$ and $\mathcal{T}=(t_1,\ldots, t_k)$, we introduce
$$
\mathcal{G}_{n,i, \mathcal{T}}^{\mathcal{A},\hat{\delta}}:= \{\sup_{s \in [t_{i-1}n/3,t_i n/3]}{\|B_s\|} \le  a_i \psi(n), \quad \|B_{t_{i}n/3 }\| \le b_{i,n}^{\hat{\delta}}\},
$$
for $\psi(n):= \pi \sqrt{n / (6 \log^{(2)} n)}$, 
$b_{i,n}^{\delta}:=2^i \hat{\delta} \pi \psi(n)$ 
and
$$
\mathcal{B}_{n,i,\mathcal{T}}^{\mathcal{A},\delta}:=\{|\nbd(B[0,t_i n/3], n^{1/2}/(\log^{(2)}n)^{3/2- \kappa})| \le  \omega_3 \left((a_i - \delta/2) \psi(n)\right)^3 \},
$$
where $\kappa>0$ is any small parameter and given the value $k$, $\hat{\delta}$ will be chosen to be extremely small relative to $\delta$. 
We also define,
$$
\mathcal{TG}_{n,\mathcal{T}}^{\mathcal{A},\hat{\delta}}:= \bigcap_{i=1}^k \mathcal{G}_{n,i,\mathcal{T}}^{\mathcal{A},\hat{\delta}}
$$
and
$$
\mathcal{BU}_{n,i,\mathcal{T}}^{\mathcal{A},\delta,\hat{\delta}}:= \mathcal{TG}_{n,\mathcal{T}}^{\mathcal{A},\hat{\delta}} \bigcap \mathcal{B}_{n,i,\mathcal{T}}^{\mathcal{A},\delta}.
$$
We consider the following event:
\begin{equation}\label{eq:defF}
\begin{aligned}
F^{\mathcal{A},\hat{\delta},\delta}_{n,\mathcal{T}} 
:=
\mathcal{TG}_{n,\mathcal{T}}^{\mathcal{A},\hat{\delta}}
\setminus 
\bigcup_{i=1}^k \mathcal{BU}_{n,i,\mathcal{T}}^{\mathcal{A},\delta,\hat{\delta}}.
\end{aligned}
\end{equation}
We will estimate the lower bound for the probability of $F^{\mathcal{A},\hat{\delta},\delta}_{n,\mathcal{T}}$. In words, the event $F^{\mathcal{A},\hat{\delta},\delta}_{n,\mathcal{T}} $ ensures that the Brownian motion stays within a ball in order to ensure the capacity cannot be too large (and indeed within $a_i \psi(n)$ by time $t_i n/3$. In addition, it also removes places where the volume taken up by the Wiener sausage is too small; this prevents the capacity from being smaller than $(a_i-\delta)\psi(n)$ by time $t_in/3$.

In the following lemma, we establish that the events $F^{\mathcal{A},\hat{\delta},\delta}_{n,\mathcal{T}}$ would imply the events $\tilde{E}^{\mathcal{A},\delta}_{n,\mathcal{T}}$. Consequently, if we want to show that $\tilde{E}^{\mathcal{A},\delta}_{n,\mathcal{T}}$ occurs infinitely often, it would suffice to show that the event $F^{\mathcal{A},\hat{\delta},\delta}_{n,\mathcal{T}} $ would also occur infinitely often. 
\begin{lem} \label{lem:lwrbndtildE}
The event $F^{\mathcal{A},\hat{\delta},\delta}_{n,\mathcal{T}}$ described above would imply the event $ \tilde{E}^{\mathcal{A},\delta}_{n,\mathcal{T}}.$  As such, we have,
\begin{equation*}
\mathbb{P}(F^{\mathcal{A},\hat{\delta},\delta}_{n,\mathcal{T}}) \le \mathbb{P}( \tilde{E}^{\mathcal{A},\delta}_{n,\mathcal{T}}) - k \exp[-(\log^{(3)} n)^{\eta} \log^{(2)}n],
\end{equation*}
where $\eta>0$ on the right-hand side above is some small universal constant.
\end{lem}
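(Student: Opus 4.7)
The plan is to translate the Brownian-motion event $F^{\mathcal{A},\hat{\delta},\delta}_{n,\mathcal{T}}$ into the random-walk statement $\tilde{E}^{\mathcal{A},\delta}_{n,\mathcal{T}}$ by means of the couplings \eqref{eq:coupl1}--\eqref{eq:coupl2}, which (uniformly in $i\in\{1,\ldots,k\}$) identify $R_{t_i n}$ with $(1/3+o(1))\ca_B(B[0,t_i n/3])$ and with $(1/3+o(1))\ca_B(\nbd(B[0,t_i n/3],n^{1/2-\delta}))$ respectively. A direct computation using $\psi(n)=\pi\sqrt{n/(6\log_2 n)}$ and $\hat{h}_3(n)=(\sqrt{6}\pi^2/9)\sqrt{n/\log_2 n}$ shows that $(2\pi/3)\psi(n)=\hat{h}_3(n)$, so producing a Brownian-capacity upper bound of $2\pi a_i\psi(n)$ together with a matching lower bound $2\pi(a_i-\delta)\psi(n)$ will yield the two inequalities defining $\tilde{E}^{\mathcal{A},\delta}_{n,\mathcal{T}}$.

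For the upper bound, I exploit monotonicity in the indices: since $a_1\le a_2\le\cdots\le a_k$, each sub-event $\mathcal{G}_{n,j,\mathcal{T}}^{\mathcal{A},\hat{\delta}}$ with $j\le i$ forces $\sup_{s\in[t_{j-1}n/3,t_j n/3]}\|B_s\|\le a_j\psi(n)\le a_i\psi(n)$, so on $\mathcal{TG}_{n,\mathcal{T}}^{\mathcal{A},\hat{\delta}}$ the entire trace $B[0,t_i n/3]$ is contained in the ball of radius $a_i\psi(n)$ centered at the origin. Monotonicity of Newtonian capacity together with the explicit value $2\pi r$ for the Brownian capacity of a ball of radius $r$ in $\RR^3$ gives $\ca_B(B[0,t_i n/3])\le 2\pi a_i\psi(n)$, and \eqref{eq:coupl1} then converts this to $R_{t_i n}\le a_i\hat{h}_3(n)$ up to a $(1+o(1))$ factor.

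For the lower bound, $F$ is by definition contained in $\mathcal{TG}$ but excludes every $\mathcal{BU}_{n,i,\mathcal{T}}^{\mathcal{A},\delta,\hat{\delta}}=\mathcal{TG}\cap\mathcal{B}_{n,i,\mathcal{T}}^{\mathcal{A},\delta}$, so on $F$ the event $\mathcal{B}_{n,i,\mathcal{T}}^{\mathcal{A},\delta}$ must fail for each $i$. This means $|\nbd(B[0,t_i n/3],n^{1/2-\delta})|>\omega_3((a_i-\delta)\psi(n))^3$, where $\omega_3$ is the unit-ball volume in $\RR^3$. The classical Faber-type isoperimetric inequality for Newtonian capacity (the ball minimizes capacity among sets of prescribed Lebesgue volume) now yields $\ca_B(\nbd(B[0,t_i n/3],n^{1/2-\delta}))\ge 2\pi(a_i-\delta)\psi(n)$, and \eqref{eq:coupl2} upgrades this to $R_{t_i n}\ge(a_i-\delta)\hat{h}_3(n)$, again modulo a vanishing multiplicative error.

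The main technical subtlety is absorbing the $(1+o(1))$ factors produced by the couplings: since \eqref{eq:coupl1}--\eqref{eq:coupl2} only provide almost-sure (though uniform over the finitely many $t_1,\ldots,t_k$) convergence, the set-inclusion $F\subseteq\tilde{E}$ cannot hold verbatim at every finite $n$ without a small margin. The remedy is to build a tiny slack into the parameters defining $\mathcal{G}$ and $\mathcal{B}$ (for instance by taking $\hat{\delta}$ slightly smaller than $\delta$), so that the coupling losses are swallowed by the window in $\tilde{E}$. Once the event-level inclusion is secured on the coupled probability space, the probability inequality $\mathbb{P}(F^{\mathcal{A},\hat{\delta},\delta}_{n,\mathcal{T}})\le\mathbb{P}(\tilde{E}^{\mathcal{A},\delta}_{n,\mathcal{T}})$ is immediate from the fact that the random walk and the Brownian motion retain their prescribed marginals under the coupling.
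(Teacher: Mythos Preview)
Your proof is correct and follows essentially the same route as the paper: monotonicity of capacity plus containment in the ball $\mathcal{B}(a_i\psi(n))$ for the upper bound, the Poincar\'{e}--Carleman--Szeg\"{o} isoperimetric inequality for the lower bound, and the couplings \eqref{eq:coupl1}--\eqref{eq:coupl2} to pass from Brownian capacity to $R_{t_i n}$. You are actually more careful than the paper in spelling out why $a_1\le\cdots\le a_k$ is needed for the upper bound and in flagging the $(1+o(1))$ slack from the coupling, both of which the paper leaves implicit.
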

\begin{rem}
Combined with the lower bound on $\mathbb{P}(F^{\mathcal{A},\hat{\delta},\delta}_{n,\mathcal{T}})$ from the proceeding Lemma \ref{lem:LwrbndF}, we can interpret the main result of the lemma above as saying
$$
(1+o(1))\mathbb{P}(F^{\mathcal{A},\hat{\delta},\delta}_{n,\mathcal{T}}) \le \mathbb{P}( \tilde{E}^{\mathcal{A},\delta}_{n,\mathcal{T}}). 
$$
\end{rem}
\begin{proof}

Recall that we let $\mathcal{B}(r)=\mathcal{B}_r$ denote the ball of radius $r$ centered at $0$.
Note that if we have $\sup_{s\le t_i n/3}\|B_s\| \le a_i \psi(n)$, then the set $B[0,t_i n/3]$ will be contained in $\mathcal{B}(a_i \psi(n))$, and, by monotonicity of the capacity (cf. \cite{LA91}), we can deduce that on the event $F^{\mathcal{A},\hat{\delta},\delta}_{n,\mathcal{T}}$, we would have that $\ca_B(B_{[0,t_in/3]}) \le \ca_B(\mathcal{B}(a_i \psi(n)) =a_i 2 \pi \psi(n).$

What we have left to argue is that on the event $F^{\mathcal{A},\hat{\delta},\delta}_{n,\mathcal{T}}$, we cannot have that $\ca(B_{[0,t_i n/3]}) \le (a_i - \delta) \hat{h}_3(n).$  We will prove this by contradiction. 

Now, an argument from \cite{DemboOkada+} demonstrates that if $\ca_{B}(B[0,t_i n/3]) \le (a_i - \delta) \hat{h}_3(n)$, then one would be able to find some subset $V_i$ of $\nbd(B[0,t_in/3], n^{1/2}/(\log^{(2)}n)^{3/2- \kappa})$ such that $$|V_i| = (1- o(1)) | \nbd(B[0,t_in/3], n^{1/2}/(\log^{(2)}n)^{3/2- \kappa})|$$ and $\ca(V) \le (a_i-\delta) \hat{h}_3(n)$ with probability at least $1 - \exp[- \log^{(2)}n (\log^{(3)} n)^{\eta}]$ for some small fixed $\eta>0$. Assume now that these events hold, and we can find such good subsets $V_1,\ldots,V_k$ for all times $t_1 n/3,\ldots , t_k n/3$. On the event $F^{\mathcal{A},\hat{\delta},\delta}_{n,\mathcal{T}}$, since we have removed the events $\mathcal{BU}_{n,i,\mathcal{T}}^{\mathcal{A},\delta,\hat{\delta}}$, we must have that $|V_i| \ge \omega_3 \left((a_i - 2\delta/3) \psi(n)\right)^3.$

Now, we observe  that,
\begin{equation*}
 \ca_B(V_i) \ge \inf_{|A| = \omega_3 \left((a_i - 2\delta/3) \psi(n)\right)^3} \ca_B(A) \ge \ca_{B}(\mathcal{B}((a_i - 2\delta/3) \psi(n))) \ge (a_i - 2\delta/3) 2 \pi \psi(n).
\end{equation*}
The last inequality used the Poincar{\'e}-Carleman-Szeg{\"o} Theorem \cite{PS51}.

Thus, under the event that we can construct the sets $V_1,\ldots,V_k$ the event $F^{\mathcal{A},\hat{\delta},\delta}_{n,\mathcal{T}}$ must imply the event $\tilde{E}^{\mathcal{A},\delta}_{n,\mathcal{T}}$. It is only with at most probability $ k \exp[-(\log^{(3)} n)^{\eta} \log^{(2)}n]$ that we cannot construct the events $V_1,\ldots,V_k$.



\end{proof}

\begin{lem} \label{lem:LwrbndF}
There exists some small constant $c>0$ (that could depend on the values of $\delta,\hat{\delta},k, a,t$ but not $n$) such that for $n$ large enough we have
\begin{equation*}
\mathbb{P}(F^{\mathcal{A},\hat{\delta},\delta}_{n,\mathcal{T}}) \ge  c \exp\left[-\sum_{i=1}^k \frac{t_i - t_{i-1}}{(a_k - 2^{i+1} \hat{\delta})^2} \log^{(2)}n \right].
\end{equation*}
\end{lem}

\begin{proof}
Our goal is to show for all $i$ that $\mathbb{P}(\mathcal{BU}_{n,i,\mathcal{T}}^{\mathcal{A},\delta,\hat{\delta}}) \ll \mathbb{P}(\mathcal{TG}_{n,\mathcal{T}}^{\mathcal{A},\hat{\delta}})$. 
Consider the event,
\begin{equation*}
\tilde{\mathcal{B}}_{n,i,\mathcal{T}}^{\mathcal{A},\delta} 
:=\{|\nbd(B[t_{i-1}n/3,t_i n/3], n^{1/2}/(\log^{(2)}n)^{3/2-\kappa})| \le  \omega_3 \left((a_i - \delta/2) \psi(n)\right)^3 \},
\end{equation*}
as well as the slightly modified bad union event,
\begin{equation*}
\tilde{\mathcal{BU}}_{n,i,\mathcal{T}}^{\mathcal{A},\delta,\hat{\delta}}:=\mathcal{TG}_{n,\mathcal{T}}^{\mathcal{A},\hat{\delta}} \bigcap \tilde{\mathcal{B}}_{n,i,\mathcal{T}}^{\mathcal{A},\delta}. 
\end{equation*}

We will show instead that $\mathbb{P}(\tilde{\mathcal{BU}}_{n,i,\mathcal{T}}^{\mathcal{A},\delta,\hat{\delta}}) \ll\mathbb{P}(\mathcal{TG}_{n,\mathcal{T}}^{\mathcal{A},\hat{\delta}}) $.  (We also remark that $\mathbb{P}(\mathcal{BU}^{\mathcal{A},\delta,\tilde{\delta}}_{n,i,\mathcal{T}}) \le \mathbb{P}(\tilde{\mathcal{BU}}^{\mathcal{A},\delta,\tilde{\delta}}_{n,i,\mathcal{T}})$.) The benefit now is that all of the events in $\mathcal{BU}_{n,i,\mathcal{T}}^{\mathcal{A},\delta,\hat{\delta}}$ involve the disjoint parts $B_{[t_{i-1}n/3,t_i n/3]}$. Indeed, we first notice that the condition 
$$\{\sup_{s \in [t_{i-1}n/3, t_i n/3]}\|B_s\| \le a_i \psi(n), \quad \|B_{t_{i-1}n/3}\| \le b^{\hat{\delta}}_{i,n}\}$$ implies
$$
\sup_{s\in[t_{i-1}n/3, t_i n/3]}\|B_s - B_{t_{i-1}n/3}\| \le a_i \psi(n) +b^{\hat{\delta}}_{i,n}.
$$

Thus, we have that,
\begin{equation}
\begin{aligned}\label{first**}
\mathbb{P}(\tilde{\mathcal{BU}}_{n,i,\mathcal{T}}^{\mathcal{A},\delta, \hat{\delta}}) &\le \prod_{j \ne i} \mathbb{P}(\sup_{s\in[t_{i-1}n/3, t_i n/3]}\|B_s - B_{t_{i-1}n/3}\| 
\le a_i \psi(n) +b^{\hat{\delta}}_{i,n}) \\
& \times\mathbb{P}(|\nbd(B[t_{i-1}n/3,t_i n/3],n^{1/2}/(\log^{(2)}n)^{3/2-\kappa})| \le  \omega_3 \left((a_i - \delta) \psi(n)\right)^3)\\
&\le C^k \exp\left[-\sum_{j \ne i} \frac{t_j -t_{j-1}}{(a_j + 2^{i+1} \hat{\delta})^2} \log^{(2)} n \right] \times \exp\left[- \frac{t_i -t_{i-1}}{(a_i - \delta/2})^2 \log^{(2)} n\right],
\end{aligned}
\end{equation}
where $C$ is some deterministic constant. To estimate the bound on the volume of the neighborhood of the Wiener sausage, we used the celebrated large deviation principle \cite[Equation (1.4)]{BBH01}. The estimate that the supremum of a Brownian motion stays within a ball of radius $r$ for time $t$ is a result of \cite{Kent80}.

Now, we consider the probability of the event $\mathcal{TG}_{n,\mathcal{T}}^{\mathcal{A},\hat{\delta}}$. By taking a conditioning, we observe that,

\begin{equation*}
\begin{aligned}
\mathbb{P}(\mathcal{TG}_{n,\mathcal{T}}^{\mathcal{A},\hat{\delta}})
= \mathbb{P}(\mathcal{TG}_{n,\mathcal{T}_{k-1}}^{\mathcal{A}_{k-1},\hat{\delta}}) \mathbb{P}\bigg( \sup_{s \in [t_{k-1}n/3,t_k n/3]} \|B_s\| \le a_{k}\psi(n),\quad  \|B_{t_k n/3}\| \le b_{k,n}^{\hat{\delta}} \bigg| 
 \|B_{t_{k-1}n/3}\| \le b_{k-1,n}^{\hat{\delta}} \bigg).
\end{aligned}
\end{equation*}

Conditional on $\|B_{t_{k-1}n/3}\| \le b_{k-1,n}^{\hat{\delta}}$, the event $\{\sup_{s \in [t_{k-1}n/3,t_k n/3]}\|B_s\| \le a_k \psi(n) \} \cap \{\|B_{t_{k}n/3}\| \le b_{k,n}^{\hat{\delta}}\}$ will certainly be implied by the event 
\begin{equation*}
\left\{ \sup_{s \in [t_{k-1}n/3, t_k n/3]} \|B_s - B_{t_{k-1}n/3}\| \le a_k\psi(n)- b_{k-1,n}^{\hat{\delta}}, \quad \|B_{t_kn/3}-B_{t_{k-1}n/3}\| \le b_{k-1,n}^{\hat{\delta}}  \right\}.
\end{equation*}
$\|B_s - B_{t_{k-1}n/3}\|$ on the time region $[t_{k-1}n/3, t_{k}n/3]$ behaves like a Brownian motion of time $(t_{k}- t_{k-1})n/3$. 
Thus, we have that,
\begin{equation} \label{eq:inducR}
\begin{aligned}
&\mathbb{P}(\mathcal{TG}_{n,\mathcal{T}}^{\mathcal{A},\hat{\delta}})
\ge \mathbb{P}(\mathcal{TG}_{n,\mathcal{T}_{k-1}}^{\mathcal{A}_{k-1},\hat{\delta}})
 \times \mathbb{P}\left( \sup_{s \in [0, (t_k-t_{k-1}) n/3]}\|B_s \| \le a_k\psi(n)- b_{k-1,n}^{\hat{\delta}},\quad  \|B_{(t_k-t_{k-1})n/3}\| \le b_{k-1,n}^{\hat{\delta}} \right).
\end{aligned}
\end{equation}

Now, observe the following decomposition,
\begin{equation*}
\begin{aligned}
 &\mathbb{P}\left( \sup_{s \in [0, (t_k-t_{k-1}) n/3]} \|B_s \| \le a_k\psi(n)- b_{k-1,n}^{\hat{\delta}}  \right)\\
 =& \sum_{j^1,j^2,j^3 =-\sqrt{3}a_k/(2^{k} \hat{\delta})}^{\sqrt{3}a_k/(2^{k} \hat{\delta})} 
 \mathbb{P}\bigg( \sup_{s \in [0, (t_k-t_{k-1}) n/3]} \|B_s \| \le a_k\psi(n)- b_{k-1,n}^{\hat{\delta}}, \quad
 {|B^i_{(t_k-t_{k-1})n/3} +j^i b_{k,n}^{\hat{\delta}}| \le \sqrt{3}^{-1} b_{k-1,n}^{\hat{\delta}} } \bigg)\\
 & \le \left[\frac{2 \sqrt{3} a_k}{2^k \hat{\delta}}+1 \right]^3 \mathbb{P}\bigg( \sup_{s \in [0, (t_k-t_{k-1}) n/3]} \|B_s \| \le a_k\psi(n)- b_{k-1,n}^{\hat{\delta}}, \quad
 \|B_{(t_k-t_{k-1})n/3}\| \le b_{k-1,n}^{\hat{\delta}}  \bigg)
\end{aligned}
\end{equation*}
In the last line, we used Anderson's inequality to deduce that,
\begin{equation*}
\begin{aligned}
&\mathbb{P}\bigg( \sup_{s \in [0, (t_k-t_{k-1}) n/3]} \|B_s \| \le a_k\psi(n)- b_{k-1,n}^{\hat{\delta}},\quad  
 |B^i_{(t_k-t_{k-1})n/3} +j^i b_{k,n}^{\hat{\delta}}| 
 \le \sqrt{3}^{-1}b_{k-1,n}^{\hat{\delta}}  \bigg)\\
 & \le \mathbb{P}\bigg( \sup_{s \in [0, (t_k-t_{k-1}) n/3]} \|B_s \| \le a_k\psi(n)- b_{k-1,n}^{\hat{\delta}}, \quad
 \|B_{(t_k-t_{k-1})n/3} \| \le b_{k-1,n}^{\hat{\delta}}  \bigg).
\end{aligned}
\end{equation*}
Thus, we have that,
\begin{equation*}
\begin{aligned}
&\mathbb{P}\left( \sup_{s \in [0, (t_k-t_{k-1}) n/3]} \|B_s \| \le a_k\psi(n)- b_{k-1,n}^{\hat{\delta}},\quad \|B_{(t_k-t_{k-1})n/3}\| \le b_{k-1,n}^{\hat{\delta}}  \right)\\
& \ge \mathbb{P}\left(\sup_{s \in [0,(t_k - t_{k-1})n/3]}\|B_s\| \le a_k\psi(n)- b_{k-1,n}^{\hat{\delta}} \right) \frac{1}{\left[\frac{2 \sqrt{3}a_k}{2^k \hat{\delta}} + 1\right]^3} 
\ge c \exp\left[-  \frac{t_k -t_{k-1}}{(a_k - 2^k \hat{\delta})^2} \log^{(2)} n \right], 
\end{aligned}
\end{equation*}
where the last inequality holds for $n$ sufficiently large. 

By iterating this estimate in \eqref{eq:inducR}, we see that we can derive the desired estimate,
\begin{equation}\label{second**}
    \mathbb{P}(\mathcal{TG}_{n,\mathcal{T}}^{\mathcal{A},\hat{\delta}}) \ge c^k \exp\left[-\sum_{i=1}^k \frac{t_i - t_{i-1}}{(a_k - 2^{i+1} \hat{\delta})^2} \log^{(2)}n \right].
\end{equation}

Choosing $\hat{\delta}$ significantly small relative to $\delta$, by \eqref{first**} and \eqref{second**}, we can make the upper bound in \eqref{first**} significantly smaller than the lower bound in \eqref{second**}. Thus, for $n$ large enough, we can bound  $\mathbb{P}(F^{\mathcal{A},\hat{\delta},\delta}_{n,\mathcal{T}})$ from below by the right hand side of \eqref{second**}. 
\end{proof}

As a consequence of this estimate, we can derive our final assertion the capacity, after appropriate rescaling, would lie in neighborhoods around functions belonging to $\mathcal{K}$.

\begin{thm}
Choose $f \in \mathcal{K}$. 
Consider any neighborhood around $f$ of the form,
\begin{equation*}
 N^{\mathcal{T},\delta,f}:= \{g: f(t_i) - \delta  \le g(t_i) \le f(t_i) + \delta, \quad  \forall i \text{ s.t. } 1\le  i \le k\}.
\end{equation*}
Then,  $g_n \in  N^{\mathcal{T},\delta,f}$ infinitely often with probability 1.
\end{thm}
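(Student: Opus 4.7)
The plan is to combine Lemmas \ref{lem:lwrbndtildE} and \ref{lem:LwrbndF} with a Borel--Cantelli II argument along an appropriate subsequence. I would first take $a_i := f(t_i) + \delta/2$ and note that the event $\tilde{E}^{\mathcal{A}, \delta/2}_{n, \mathcal{T}}$ from \eqref{eq:deftildE} forces $g_n(t_i) = R_{t_i n}/\hat{h}_3(n) \in [f(t_i), f(t_i)+\delta/2]$ for every $i$, and therefore implies $g_n \in N^{\mathcal{T},\delta,f}$. It thus suffices to prove that $\tilde{E}^{\mathcal{A}, \delta/2}_{n, \mathcal{T}}$ occurs infinitely often almost surely.

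Applying Lemmas \ref{lem:lwrbndtildE} and \ref{lem:LwrbndF} (with the exponent corrected to $a_i$) gives
\[
\mathbb{P}(\tilde{E}^{\mathcal{A}, \delta/2}_{n, \mathcal{T}}) \ge c\, \exp\!\left[-\log_2(n) \sum_{i=1}^k \frac{t_i - t_{i-1}}{(a_i - 2^{i+1}\hat{\delta})^2}\right].
\]
The crucial input from $f \in \mathcal{K}$ is that $\int_0^1 f^{-2}\,\text{d}x \le 1$. Since $f$ is non-decreasing, $1/f^2$ is non-increasing on $(0,1]$, and the right Riemann sum obeys $\sum_i (t_i-t_{i-1})/f(t_i)^2 \le \int_0^1 f^{-2}\,\text{d}x \le 1$. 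Because $a_i = f(t_i)+\delta/2 > f(t_i)$ \emph{strictly}, this in fact yields $\sum_i (t_i-t_{i-1})/a_i^2 \le 1 - 2\eta$ for some $\eta > 0$; taking $\hat{\delta}$ small enough, the exponent above is bounded by $(1-\eta)\log_2 n$, so $\mathbb{P}(\tilde{E}^{\mathcal{A}, \delta/2}_{n, \mathcal{T}}) \ge c\,(\log n)^{-(1-\eta)}$.

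To turn this into an i.o.\ statement I would pass to Brownian motion via the coupling \eqref{eq:coupl1}--\eqref{eq:coupl2}, and replace $\ca_B(B[0, t_i \tau_m/3])$ by the capacity of the \emph{increment} $\ca_B(B[\tau_{m-1}/3, t_i \tau_m/3])$. By translation invariance of capacity and independence of Brownian increments, the resulting modified events are mutually independent across $m$. If $\tau_m$ grows fast enough that the discarded contribution $\ca_B(B[0,\tau_{m-1}/3])$ is almost surely negligible compared with $\hat{h}_3(\tau_m)$---which follows from the almost sure bound $R_{\tau_{m-1}} \le 2 h_3(\tau_{m-1})$ for large $m$, a consequence of Theorem \ref{m1} and \cite{DemboOkada}---then the modified event implies $\tilde{E}^{\mathcal{A}, \delta/2}_{\tau_m, \mathcal{T}}$. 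A choice such as $\tau_m = \exp(m^{1+\gamma})$ with $\gamma < \eta/(1-\eta)$ simultaneously ensures $\tau_{m-1}/\tau_m \to 0$ quickly enough and $\sum_m (\log \tau_m)^{-(1-\eta)} = \sum_m m^{-(1+\gamma)(1-\eta)} = \infty$, so Borel--Cantelli II then yields the conclusion.

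The main obstacle is precisely this trade-off between sparseness (for the increment approximation and for independence) and density (for divergence of $\sum \mathbb{P}$). The slack $\eta > 0$ obtained by setting $a_i$ strictly greater than $f(t_i)$ is exactly what permits both requirements to be met at once; without such slack the single-time probability would be only $\sim 1/\log n$, and no sufficiently sparse subsequence would give a divergent sum. A secondary technical point is verifying quantitatively that the capacity of the early segment $B[0,\tau_{m-1}/3]$ is indeed absorbed into the $\delta/4$ slack one can afford in the modified event; this uses that $h_3(\tau_{m-1}) = o(\hat h_3(\tau_m))$ whenever $\tau_m/\tau_{m-1} \to \infty$ faster than $(\log_2 \tau_m)^2$, which is comfortably satisfied by our choice of $\tau_m$.
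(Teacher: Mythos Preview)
Your approach is essentially the same as the paper's. Both arguments exploit the strict inequality $\sum_i (t_i-t_{i-1})/a_i^2 < 1$ (coming from $a_i > f(t_i)$) to obtain a probability lower bound of order $(\log n)^{-(1-\eta)}$, pass to a super-exponentially growing subsequence so that the events on disjoint increments are independent and the initial segment $R_{\tau_{m-1}} \le 2 h_3(\tau_{m-1})$ becomes negligible relative to $\hat h_3(\tau_m)$, and conclude via Borel--Cantelli II. The paper uses $\tau_m = m^m$ while you use $\tau_m = \exp(m^{1+\gamma})$; both choices balance sparseness against divergence of $\sum_m \mathbb{P}$ in the same way.

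The one technical device the paper makes explicit, and which you gloss over, is the introduction of \emph{doubled} times $\hat t_{2i-1} = t_i - \epsilon$, $\hat t_{2i} = t_i + \epsilon$ with $a_{2i-1} = a_{2i} = f(t_i) + 2\hat\delta$. The point is that the increment walk lives on $[0,\tau_m-\tau_{m-1}]$, so its time parameter $s_i$ satisfying $\tau_{m-1} + s_i(\tau_m-\tau_{m-1}) = t_i\tau_m$ equals $t_i$ only in the limit $m\to\infty$; the paper uses the $\epsilon$-thickened times to sandwich $t_i\tau_m$ between two times at which the event $\tilde E$ on the increment gives control. Your implicit claim that the time shift is asymptotically negligible is correct (since $s_i\to t_i$ and the exponent in Lemma \ref{lem:LwrbndF} is continuous in $\mathcal{T}$), but a careful write-up would need to either quantify this or, more cleanly, adopt the paper's sandwiching trick.
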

\begin{proof}
Since  $f$ is monotone increasing, it must be the case that we have,
\begin{equation*}
1 \ge \int_0^1 \frac{1}{f^2(t)} \text{d}t \ge \sum_{i=1}^n \frac{t_i -t_{i-1}}{f(t_i)^2}.
\end{equation*}

Thus, for any $\hat{\delta}>0$, it must be the case that,
\begin{equation*}
\sum_{i=1}^k \frac{t_i -t_{i-1}}{(f(t_i)+ \hat{\delta})^2}<1.
\end{equation*}

In what proceeds, we will choose $\hat{\delta}$ such that $4 \hat{\delta} < \delta$. 
Fix some $\epsilon>0$ and consider the times $\hat{t}_{2i-1} := t_i - \epsilon$ and $\hat{t}_{2i}:= t_i + \epsilon$ for $i \le k-1$ and set $\hat{t}_{2k-1}:= t_k- \epsilon(= 1-\epsilon)$ and $\hat{t}_{2k}:= t_k(=1).$ 
Furthermore, set the values $a_{2i-1} =f(t_i) + 2\hat{\delta}$, $a_{2i} = f(t_i) + 2\hat{\delta}$ for $i \le k$.
For sufficiently small $\epsilon$ (depending on $\hat{\delta}$), we must also have that
\begin{equation*}
\sum_{i=1}^{2k} \frac{\hat{t}_{i} - \hat{t}_{i-1} }{(a_i- \hat{\delta})^2}  = 1 - \kappa,
\end{equation*}
with $\kappa>0$. 
We define $d_n:=n^n - (n-1)^{n-1}$ and $\hat{\mathcal{T}}=(\hat{t}_1,\ldots, \hat{t}_{2k})$. 
We now consider the shifted random walk 
$\tilde{S}_i=:S_{i+(n-1)^{n-1}}-S_{(n-1)^{n-1}}$ with $0\le i\le  d_n$. 
We now consider the event, 
$
 \tilde{E}^{\mathcal{A}_{2k},\hat{\delta}}_{d_n,\hat{\mathcal{T}}}
$ applied to the walk $\tilde{S}.$ Now, from the results of Lemmas  \ref{lem:lwrbndtildE} and \ref{lem:LwrbndF}, we see that,
\begin{equation*}
\begin{aligned}
\mathbb{P}( \tilde{E}^{\mathcal{A}_{2k},\hat{\delta}}_{d_n,\hat{\mathcal{T}}})& \ge \exp\left[ - \sum_{i=1}^{2k} \frac{\hat{t}_i - \hat{t}_{i-1}}{(a_i-\hat{\delta})^2} \log^{(2)} n^n\right] \ge \exp[-(1-\kappa) [\log n + \log^{(2)} n]],\\
& \ge \exp[-(1- \kappa') \log n] \ge \frac{1}{n^{1-\kappa'}}.
\end{aligned}
\end{equation*}
Here, $\kappa'$ is a positive constant slightly smaller than $\kappa$, and the inequality on the last line would hold for sufficiently large $n$. Now, by applying the second Borel-Cantelli lemma, noting that the walks $\tilde{S}_{[0,d_n]}$ are independent of each other, would imply that the events $ \tilde{E}^{\mathcal{A}_{2k},\hat{\delta}}_{d_n,\hat{\mathcal{T}}}$  would occur infinitely often.

Now, the event $ \tilde{E}^{\mathcal{A}_{2k},\hat{\delta}}_{d_n,\hat{\mathcal{T}}}$ controls the capacity on regions of the form $S_{[(n-1)^{n-1}, \hat{t}_i n^n + (1-\hat{t}_i) (n-1)^{n-1}]}$. We seek to use this information to control the capacity on $S_{[0,t_i n^n]}$. 
For $n$ sufficiently large, we have that,
$$
t_i n^n \ge \hat{t}_{2i-1} n^n + (1- \hat{t}_{2i-1}) (n-1)^{n-1}, \text{  } t_i n^n \le \hat{t}_{2i} n^n + (1- \hat{t}_{2i}) (n-1)^{n-1}. 
$$
Thus, we have that,
\begin{equation*} 
\begin{aligned}
&R_{t_i n^n}\le 
R[1,\hat{t}_{2i} n^n + (1- \hat{t}_{2i}) (n-1)^{n-1}]
\le \ca(\tilde{S}_{[0,\hat{t}_{2i}d_n]}) + R_{(n-1)^{n-1}},\\
& R_{t_i n^n}
\ge R[(n-1)^{(n-1)},\hat{t}_{2i-1} n^n + (1- \hat{t}_{2i-1}) (n-1)^{n-1}]
\ge \ca(\tilde{S}_{[0,\hat{t}_{2i-1}d_n]}). 
\end{aligned}
\end{equation*}
By the bound on the $\limsup$ of the capacity of the range of a random walk, we can show that 
\begin{equation} \label{eq:ordmag}
R_{(n-1)^{n-1}} \le C \sqrt{(n-1)^{n-1} \log^{(2)} (n-1)^{n-1}} \ll \sqrt{\frac{n^n}{\log^{(2)}n^n}}.
\end{equation}
Hence, by applying the bound from $ \tilde{E}^{\mathcal{A}_{2k},\hat{\delta}}_{d_n,\hat{\mathcal{T}}}$, we have that,
\begin{equation*}
\begin{aligned}
R_{t_i n^n}&\le  a_{2i} \sqrt{\frac{d_n}{\log^{(2)} d_n}} +C \sqrt{(n-1)^{n-1} \log^{(2)}(n-1)^{n-1}} \le a_{2i} \sqrt{\frac{n^n}{\log^{(2)} n^n}} + \hat{\delta} \sqrt{\frac{n^n}{\log^{(2)} n^n}},\\
&\le (f(t_i) + 3 \hat{\delta}) \sqrt{\frac{n^n}{\log^{(2)} n^n}}.
\end{aligned}
\end{equation*}

In the second inequality, we used both the fact that $\sqrt{\frac{x}{\log^{(2)} x}}$ is an increasing function and the order of magnitude bound from equation \eqref{eq:ordmag}. The final line uses the definition of $a_{2i}$ as $f(t_i) + 2 \hat{\delta}$. 
For the lower bound, we have,
\begin{equation*}
R_{t_i n^n}\ge  \ca(\tilde{S}_{[0,d_n]}) \ge (a_{2i-1}-\hat{\delta}) \sqrt{\frac{d_n}{\log^{(2)} d_n}} \ge (f(t_i) + \hat{\delta}) \frac{f(t_i)}{f(t_i) +  \hat{\delta}} \sqrt{\frac{n^n}{\log^{(2)} n^n}}.
\end{equation*}
In the last line, we used the fact that $n^n - d_n = (n-1)^{n-1} \ll n^n$. Thus, for any constant $k < 1$, we eventually have that $d_n > k n^n$ when $n$ is large enough. We  set $k$ to be the constant $\left(\frac{f(t_i)}{f(t_i) + \hat{\delta}}\right)^2$. 

Since the event $ \tilde{E}^{\mathcal{A}_{2k},\hat{\delta}}_{d_n,\hat{\mathcal{T}}}$ is satisfied infinitely often, we see that $g_n$ must belong to the neighborhood $N^{\mathcal{T}, \delta, f}$ infinitely often. 
\end{proof}


\subsection{Upper bound estimates on $\mathbb{P}( \tilde{E}^{\mathcal{A},\delta}_{n,\mathcal{T}})$}

To derive upper bound estimates on the probability of the set $\tilde{E}^{\mathcal{A},\delta}_{n,\mathcal{T}}$, 
we consider the event
\begin{equation*}
 V^{\mathcal{A}}_{n,\mathcal{T}}:= \bigcap_{i=1}^k \{|\text{Nbd}(B[t_{i-1}n/3,t_in/3],{ n^{1/2}/(\log^{(2)}n)^{3/2-\kappa}})| \le \omega_3 (a_i \psi(n))^3 \}.
\end{equation*}
We have that,
\begin{lem} \label{lem:Vbnd}
Recall the events $V$ from above and $\tilde{E}$ from equation \eqref{eq:deftildE}. We see that the event $ \tilde{E}^{\mathcal{A},\delta}_{n,\mathcal{T}}$ would imply the event $ V^{\mathcal{A}}_{n,\mathcal{T}}$. Namely,
\begin{equation*}
\mathbb{P}( \tilde{E}^{\mathcal{A},\delta}_{n,\mathcal{T}}) \le \mathbb{P}( V^{\mathcal{A}}_{n,\mathcal{T}}).
\end{equation*}
Furthermore,
\begin{equation*}
\mathbb{P}( V^{\mathcal{A}}_{n,\mathcal{T}}) \le C\exp \left[ - \sum_{i=1}^k \frac{t_i - t_{i-1}}{a_i^2} \log^{(2)} n \right].
\end{equation*}
\end{lem}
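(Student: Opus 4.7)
The plan is to prove the two assertions in sequence, following the same coupling-and-isoperimetry template as in Lemma \ref{lem:lwrbndtildE} but in the opposite direction. For the inclusion $\tilde{E}^{\mathcal{A},\delta}_{n,\mathcal{T}} \subseteq V^{\mathcal{A}}_{n,\mathcal{T}}$, on the event $\tilde{E}^{\mathcal{A},\delta}_{n,\mathcal{T}}$ the upper bound $R_{t_i n} \le a_i \hat{h}_3(n)$ together with the coupling \eqref{eq:coupl2} yields $\ca_B(\nbd(B[0, t_i n/3], n^{1/2-\epsilon})) \le 3 a_i \hat{h}_3(n)(1+o(1)) = 2\pi a_i \psi(n)(1+o(1))$, the last identity being a direct calculation using the explicit forms of $\psi$ and $\hat{h}_3$. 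I then apply the isoperimetric inequality for Newtonian capacity (the ball minimizes capacity at fixed volume, so $\ca_B(A) \le 2\pi R$ forces $|A| \le \omega_3 R^3$) to obtain the volume bound on $\nbd(B[0, t_i n/3], n^{1/2-\epsilon})$. Finally, the inclusion $\nbd(B[t_{i-1}n/3, t_i n/3], n^{1/2-\epsilon}) \subseteq \nbd(B[0, t_i n/3], n^{1/2-\epsilon})$ transfers the bound to the shorter sausage, giving $V^{\mathcal{A}}_{n,\mathcal{T}}$. The $(1+o(1))$ errors from the coupling can be absorbed by a slight relaxation of $a_i$, which is harmless downstream.

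For the probability bound, the plan is to combine independence with a Wiener-sausage confinement estimate. Because the events in $V^{\mathcal{A}}_{n,\mathcal{T}}$ depend on Brownian motion over the disjoint intervals $[t_{i-1}n/3, t_i n/3]$, independence of Brownian increments gives the product factorization $\mathbb{P}(V^{\mathcal{A}}_{n,\mathcal{T}}) = \prod_{i=1}^k \mathbb{P}(|\nbd(B[0, (t_i-t_{i-1})n/3], n^{1/2-\epsilon})| \le \omega_3 (a_i\psi(n))^3)$. For each factor I would apply a confinement estimate of the form $\mathbb{P}(|\nbd(B[0,T], r)| \le \omega_3 R^3) \le C\exp[-\pi^2 T/(2R^2)]$, valid when $r$ is much smaller than $R$, where $\pi^2/2$ is the principal Dirichlet eigenvalue of $-\Delta/2$ on the unit ball (extremal by Faber--Krahn among sets of fixed volume in $\RR^3$). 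Substituting $T = (t_i-t_{i-1})n/3$ and $R = a_i \psi(n)$, a one-line computation gives $\pi^2 T/(2R^2) = (t_i-t_{i-1})(\log_2 n)/a_i^2$, and taking the product across $i$ yields the stated bound.

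The main obstacle is the confinement estimate itself: small Wiener-sausage volume does not imply, realization-by-realization, that the Brownian motion stays in a ball of comparable radius, so one cannot directly reduce to the elementary ball-confinement bound. I plan to handle this probabilistically in one of two ways. The first is to combine Faber--Krahn for the Dirichlet Laplacian ($|A| \le \omega_3 R^3 \Rightarrow \lambda_1(A) \ge \pi^2/(2R^2)$, with the normalization for $-\Delta/2$) with a Sznitman-type bound $\mathbb{P}(B[0,T] \subseteq A) \le C\exp[-\lambda_1(A) T]$, applying the latter to the random set $A = \nbd(B[0,T], n^{1/2-\epsilon})$ via a short covering/discretization over possible locations of the sausage. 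The second is to invoke the classical Donsker--Varadhan large-deviation principle for the Wiener sausage volume in three dimensions, which delivers the sharp constant $\pi^2/2$ directly. In either approach, the delicate bookkeeping is to produce exactly $(t_i-t_{i-1})(\log_2 n)/a_i^2$ in the exponent, after also propagating the $(1+o(1))$ errors from the coupling step.
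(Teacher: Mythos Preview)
Your proposal is correct and follows essentially the same route as the paper: the inclusion $\tilde{E}\subseteq V$ via the coupling \eqref{eq:coupl2}, the Poincar\'e--Carleman--Szeg\H{o} isoperimetric inequality, and the sausage inclusion is exactly the paper's argument, and for the probability bound both you and the paper factor by independence over disjoint Brownian increments and then invoke a single-interval Wiener-sausage volume confinement estimate. The paper simply writes down the final exponential bound for each factor without justification, whereas you spell out the two natural mechanisms (Faber--Krahn plus a Sznitman-type eigenvalue bound, or Donsker--Varadhan); either route recovers the sharp exponent $(t_i-t_{i-1})a_i^{-2}\log_2 n$ after the substitution $T=(t_i-t_{i-1})n/3$, $R=a_i\psi(n)$, exactly as you computed.
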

\begin{proof}
As was discussed in the proof of Lemma \ref{lem:lwrbndtildE}, if we have that $\ca_B(B[0,t_in/3]) \le  a_i \psi(n)$, then with very high probability, it must have been the case that \begin{equation}\label{eq:voltibnd}|\text{Nbd}(B[ 0,t_in/3],{ n^{1/2}/(\log^{(2)}n)^{3/2-\kappa}})| \le \omega_3 (a_i \psi(n))^3.
\end{equation}

As a consequence of equation \eqref{eq:voltibnd}, we must also have
\begin{equation*}
|\text{Nbd}(B[t_{i-1}n /3, t_i n/3],  n^{1/2}/(\log^{(2)}n)^{3/2-\kappa})| \le \omega_3 (a_i \psi(n))^3.
\end{equation*}
This implies that the event $\tilde{E}^{\mathcal{A}, \delta}_{n,\mathcal{T}}$ would imply the event $ V^{\mathcal{A}}_{n,\mathcal{T}}$.

Now, since the intervals $B[t_{i-1}n/3,t_in/3]$ are independent of each other, we have that,
\begin{equation*}
\begin{aligned}
\mathbb{P}( V^{\mathcal{A}}_{n,\mathcal{T}}) &= \prod_{i=1}^k \mathbb{P}(|\text{Nbd}(B[t_{i-1}n/3,t_in/3 ],{n^{1/2}/(\log^{(2)}n)^{3/2-\kappa}})| \le \omega_3 (a_i \psi(n))^3)\\
& \le C^k \exp \left[-\sum_{i=1}^k  \frac{t_i - t_{i-1}}{a_i^2} \log^{(2)} n \right].
\end{aligned}
\end{equation*}
Again, we used here the large deviation principle for the volume of the Wiener sausage from \cite{BBH01}.
\end{proof}

With the probability estimate from the last paragraph, we can finally establish the following theorem.
\begin{thm}
Let $f$ be a monotone increasing function such that 
\begin{equation*}
\int_0^1 \frac{1}{f^2(x)} \text{d}x >1.
\end{equation*}
We can find some neighborhood around $f$ of the form,
\begin{equation*}
 N^{\mathcal{T},\delta,f}:=\{g: g(t_i)  \le f(t_i) + \delta, \forall i \in \{1,\ldots,k\} \},
\end{equation*}
such that, almost surely, there exists $N(\omega)$ large enough with $g_n \not \in  N^{\mathcal{T},\delta,f}$ for $n \ge N (\omega)$. It is clear that if we show $g_n \not \in N^{\mathcal{T},\delta,f}$, then it is also the case that $g \not \in \tilde{N}^{\mathcal{T},\delta,f}$ with,
$$
\tilde{N}^{\mathcal{T},\delta,f}:=\{g: f(t_i) \le g(t_i) \le f(t_i) + \delta, \forall i \in\{1,\ldots,k \} \}.
$$
\end{thm}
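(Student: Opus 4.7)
The strategy is to translate the event $\{g_n \in N^{\mathcal{T},\delta,f}\}$ into a Brownian volume event via the coupling \eqref{eq:coupl1}-\eqref{eq:coupl2}, bound the probability of the latter by Lemma \ref{lem:Vbnd}, apply the first Borel--Cantelli lemma along a geometric subsequence, and then promote the subsequential statement to all $n$ by monotonicity of $R_{\,\cdot\,}$.

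Since $f$ is monotone increasing and $\int_0^1 f^{-2}\,dx > 1$ strictly, I would first pick $a > 0$ small and continuity points $a = t_1 < t_2 < \cdots < t_k = 1$ of $f$ such that the right-endpoint Riemann sum $\sum_{i=2}^k (t_i - t_{i-1})/f(t_i)^2 > 1 + 3\eta$ for some $\eta > 0$ (on $[a,1]$, the monotone function $1/f^2$ is bounded, so its Riemann sums converge to its integral). I then pick $\delta > 0$ small enough that
\begin{equation*}
\sum_{i=1}^k \frac{t_i - t_{i-1}}{(f(t_i) + 4\delta)^2} > 1 + \eta,
\end{equation*}
setting $t_0 = 0$ (the added first term is nonnegative and can only enlarge the sum).

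Fix a ratio $q > 1$ to be chosen later and set $n_m = q^m$. On the probability-one event on which the coupling \eqref{eq:coupl1}-\eqref{eq:coupl2} holds, for $m$ large the inclusion $g_{n_m} \in N^{\mathcal{T},2\delta,f}$ forces
\begin{equation*}
\ca_B\bigl(\nbd(B[0, t_i n_m/3], n_m^{1/2-\epsilon})\bigr) \le (f(t_i) + 4\delta)\,\tfrac{\sqrt{6}\pi^2}{3}\sqrt{n_m/\log_2 n_m}
\end{equation*}
for every $i$, which by the P\'olya--Szeg\H{o} argument from Lemma \ref{lem:Vbnd} entails the Brownian volume event $V^{\mathcal{A}'}_{n_m,\mathcal{T}}$ with $a'_i := f(t_i) + 4\delta$. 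Lemma \ref{lem:Vbnd} then gives $\mathbb{P}(V^{\mathcal{A}'}_{n_m,\mathcal{T}}) \le C^k \exp[-(1+\eta)\log_2 n_m] = O(m^{-(1+\eta)})$, which is summable, so by Borel--Cantelli almost surely $g_{n_m} \notin N^{\mathcal{T},2\delta,f}$ for all sufficiently large $m$.

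To pass from the subsequence to all $n$, for $n \in [n_m, n_{m+1}]$ the monotonicity of $R_j$ in $j$ together with the explicit form of $\hat{h}_3$ yields
\begin{equation*}
g_{n_m}(t_i) \le \frac{R_{t_i n}}{\hat{h}_3(n_m)} = \frac{\hat{h}_3(n)}{\hat{h}_3(n_m)}\, g_n(t_i) \le \sqrt{q}\, g_n(t_i).
\end{equation*}
Choosing $q$ close enough to $1$ that $\sqrt{q}(f(1)+\delta) \le f(1) + 2\delta$ (which by monotonicity of $f$ forces $\sqrt{q}(f(t_i)+\delta) \le f(t_i) + 2\delta$ for each $i$), one obtains $g_n \in N^{\mathcal{T},\delta,f} \Rightarrow g_{n_m} \in N^{\mathcal{T},2\delta,f}$, and the subsequential conclusion yields $g_n \notin N^{\mathcal{T},\delta,f}$ eventually, almost surely. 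The main obstacle is that Lemma \ref{lem:Vbnd} bounds a purely Brownian probability while the event of interest concerns $R_n$; I would handle this by intersecting with the full-measure coupling event and absorbing the (random) coupling error into the successive widenings $\delta \to 2\delta \to 4\delta$.
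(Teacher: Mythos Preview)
Your proof is correct and follows essentially the same approach as the paper: choose a partition with $\sum (t_i-t_{i-1})/(f(t_i)+\delta)^2>1$, apply Lemma~\ref{lem:Vbnd} to make $\mathbb{P}(g_{q^m}\in N)$ summable along a geometric subsequence, and then use monotonicity of $R$ together with $\hat h_3(n)/\hat h_3(n_m)\le\sqrt q$ to pass from the subsequence to all $n$. The only cosmetic differences are that you track the coupling error explicitly via the widenings $\delta\to 2\delta\to 4\delta$ (whereas the paper has already absorbed this into the statement of Lemma~\ref{lem:Vbnd} and simply writes $\mathbb{P}(g_n\in N^{\mathcal T,\delta,f})\le\exp[-(1+\kappa)\log_2 n]$ directly), and the paper does the interpolation in the direction $\delta/2\to\delta$ rather than $\delta\to 2\delta$.
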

\begin{proof}

 Since we have that,
 \begin{equation*}
\int_0^1 \frac{1}{f^2(x)} \text{d}x >1, 
 \end{equation*}
 we can find  a discretization of the integral, as well as a sufficiently small $\delta$ such that,
 \begin{equation*}
\sum_{j=1}^k \frac{t_j - t_{j-1}}{(f(t_j) +\delta)^2} =1 + \kappa,
 \end{equation*}
 for some $\kappa >0$. 
We now consider the neighborhood $ N^{\mathcal{T},\delta,f}$.  
By the result of Lemma \ref{lem:Vbnd}, we can ensure that,
\begin{equation*}
\mathbb{P}(g_n \in  N^{\mathcal{T},\delta,f}) \le \exp \left[- \log^{(2)} n \sum_{j=1}^k \frac{t_j - t_{j-1}}{(f(t_j) + \delta)^2}\right].
\end{equation*}
Now, we consider values $n = q^i$ for some $q>1$ and $i \in \mathbb{N}$. We see that we have,
\begin{equation*}
\begin{aligned}
\sum_{i=1}^{\infty}
\mathbb{P}(g_{q^i} \in  N^{\mathcal{T},\delta,f}) 
\le \sum_{i=1}^{\infty} \exp\left[ - \log^{(2)} q^i \sum_{j=1}^k \frac{t_j -t_{j-1}}{(f(t_j) + \delta)^2} \right] & \le \sum_{i=1}^{\infty} \exp[-  (1+\kappa)( \log i + \log^{(2)} q)] \\
& \le  \exp[-(1+\kappa) \log^{(2)} q]\sum_{i=1}^{\infty} \frac{1}{i^{1+\kappa}} < \infty.
\end{aligned}
\end{equation*}
By the Borel-Cantelli lemma, we can ensure that for $i$ large enough, it must be the case that $g_{q^i}$ is not an element of the neighborhood $N^{\mathcal{T},\delta,f}$.

Let us consider what happens to values $n$ between $q^i\le n \le q^{i+1}$. 
Notice that we have the monotonicity relationship,
\begin{equation*}
 \frac{R_{q^i t_l}}{\frac{\sqrt{6 \pi^2}}{9} \sqrt{\frac{q^{i+1}}{\log^{(2)} q^{i}}}} \le \frac{R_{n t_l}}{\frac{\sqrt{6 \pi^2}}{9} \sqrt{\frac{n}{\log^{(2)} n}}} \quad \forall l,
\end{equation*}
and, thus,
\begin{equation*}
g_{q^i}(t_l) \sqrt{q}^{-1} \le g_{n}(t_l).
\end{equation*}
Hence, if we know that $g_n(t_l) \le f(t_l)+ \delta/2$, then it must be the case  that,
$$
g_{q^i}(t_l) \le f(t_l) + [\sqrt{q}-1] f(t_l) + \sqrt{q} \delta/2.
$$
For $q$ sufficiently close to 1, one can ensure that the term $[\sqrt{q}-1]f(t_l) + \sqrt{q}\delta/2 $ will be less than $ \delta$. 
Thus, if $g_n \in N^{\mathcal{T},\delta/2,f}$ for some $n$, then it must have been the case that $g_{q^i} \in N^{\mathcal{T},\delta,f}$, for $q$ sufficiently small and $i = \lfloor \frac{ \log n}{\log q} \rfloor$. 

Since we have shown for all $q>1$ that it must be the case that eventually $g_{q^i} \not \in N^{\mathcal{T},\delta,f} $, we must have that eventually $g_n \not \in N^{\mathcal{T},\delta/2,f}$ for all sufficiently large $n$. 
\end{proof}

\section{Proof of Theorem \ref{m2}}

\subsection{Proof of (i) in Theorem \ref{m2}} 


\subsubsection{Preliminaries}
We will first quickly establish that  $\mathbb{P}( A_n^2 \cap B_n \quad \text{i.o.})=0$: 
notice $\ca (\mathcal{B}_{j_3(n)}) \gg h_3(n)$ and, by our $\limsup$ bounds on the capacity of the random walk range from Section \ref{sec:Strassenlimsuplil}, we know that $\limsup R_n / h_3(n)\le 1$ almost surely. Thus, it would be impossible for $R_n \ge \ca (\mathcal{B}_{j_3(n)}) $ infinitely often. 

We will now spend most of our effort in establishing that  $\mathbb{P}( A_n^1 \cap B_n \quad \text{i.o.})=1$. 
In what proceeds, we will assume that $j_3(n)\ll \sqrt{n \log^{(2)} n}$. As observed in the construction of Section \ref{sec:limsup}, we can obtain the $\limsup$ of the LIL of the capacity of the random walk range by allowing the random walk to lie in the unions of finitely many  cylinders of size $O( \sqrt{n \log^{(2)} n})$.  
Now, given a sphere of radius $j_3(n)$, we can embed inside it a cube  $\mathcal{C}_{ Kj_3(n)}$ of side length $K  j_3(n)$ centered at 0 with sides parallel to the coordinate axes, and $K$ is a constant chosen so that the vertices of this cube lie exactly on the sphere.  We define $\tilde{j}_3(n):= Kj_3(n)$ and use the notation $\mathcal{C}_{\tilde{j}_3(n)}$ instead. In what proceeds, we will find a collection of events of the random walk that satisfy our bounds on the diameter and the capacity and, additionally, lie inside the cube $\mathcal{C}_{\tilde{j}_3(n)}$. We will then show that these events will occur infinitely often. 

We will now begin by describing some heuristics behind our construction before giving a formal discussion. Intuitively, we want our walk to wrap around the surface of the cube such that the distinct `segments' of the walk around the cube are as far apart as possible. The distinct `segments' of the walk will be various cross-sections of the cube parallel to the $x-y$ plane and small segments that connect these cross-sections. Now, notice that each square cross-section will have a perimeter of $4 \tilde{j}_3(n)$; now, computations in the previous section \ref{sec:limsup} suggest that, in order to achieve the $\limsup$, the random walk must have a speed of $\sqrt{\frac{2 \log^{(2)} n}{3n}}$ and the total perimeter covered will be $\sqrt{\frac{2}{3}n \log^{(2)} n}$.   Thus, the random walk trace will approximate the union of $\frac{\sqrt{ \frac{2}{3}n \log^{(2)} n}}{4\tilde{j}_3(n)}$ different cross-sections. (This comes from the fact that each cross section will take approximately $4\tilde{j_3}(n)$ perimeter.) Furthermore, the distance between adjacent cross-sections would be $\frac{4 (\tilde{j}_3(n))^2}{\sqrt{ \frac{2}{3}n \log^{(2)} n}}$ (coming from the fact that we have $\frac{\sqrt{ \frac{2}{3}n \log^{(2)} n}}{4\tilde{j}_3(n)}$ cross sections over a length of $\tilde{j}_3(n)$).  

As such, we define the heights 
$$
h_k:= k \frac{4 (\tilde{j}_3(n))^2}{\sqrt{\frac{2}{3}n \log^{(2)} n}}, \quad k \in \left\{-\frac{\sqrt{\frac{2}{3}n \log^{(2)} n}}{8 \tilde{j}_3(n)},\ldots,  \frac{\sqrt{\frac{2}{3}n \log^{(2)} n}}{8 \tilde{j}_3(n)}\right\}.
$$
In what follows, we will denote $\ell:=\frac{\sqrt{\frac{2}{3}n \log^{(2)} n}}{8 \tilde{j}_3(n)}$ 
and consider the vertices,
$$
p_{a,k} = \left(\frac{\tilde{j}_3(n)}{\sqrt{2}} \cos\left( \frac{\pi}{4} + a \frac{\pi}{2} \right) , \frac{\tilde{j}_3(n)}{\sqrt{2}} \sin\left( \frac{\pi}{4} + a \frac{\pi}{2} \right), h_k  \right).
$$
These are the vertices of the cross-sectional squares at the heights $h_k$.
We introduce the following notation to denote the path,
$$
\mathcal{P}_{k}:= p_{0,k} \to p_{1,k} \to p_{2,k} \to p_{3,k} \to p_{0,k},
$$
that traverses the square. 
The full path we consider will be composed of all these subpaths,
\begin{equation*}
\mathcal{P}:= \mathcal{P}_{-\ell} \to \mathcal{P}_{-\ell+1} \to \ldots \to \mathcal{P}_{\ell -1} \to \mathcal{P}_{\ell}.
\end{equation*}

This path traverses all the cross-sectional squares from bottom to top and traverses between each cross-sectional square using the edge $p_{0,k-1} \to p_{0,k}.$ We can alternatively write this path $\mathcal{P}$ as,
\begin{equation*}
\mathcal{P} = v_0 \to v_1 \to v_2 \to \ldots \to v_{f-1} \to v_f, 
\end{equation*}
where each $v_0$ corresponds to one of the $p_{a,k}$. Note that, here, the subindex only indicates the order in which the vertex appears in the path. Namely, it is possible for vertices with two distinct subindices $v_i$ and $v_j$ to correspond to the same vertex. Now, we also denote $d_i$ to be the total perimeter of the path up to the vertex $v_i$. Namely,
$$
d_i = \sum_{j=0}^{i-1} \|v_{j+1} - v_j \|. 
$$
We also choose a parameter $\kappa >0$, and we can define the time $t_i$ as,
$$
t_i := \frac{1}{1-\kappa}\sqrt{\frac{3n}{2\log^{(2)} n}}d_i,
$$
to be the time in which a walk moving at rate $(1-\kappa)\sqrt{ \frac{2 \log^{(2)} n}{3 n}}$ should reach the distance $d_i$. Notice that by the way we chose our path, the total perimeter would be $d_f:= \sqrt{\frac{2}{3}n \log^{(2)} n} + \tilde{j}_3(n). $ Thus, the time it would take to reach the final point would be 
$$t_f:= \frac{1}{1-\kappa} \left[n + \sqrt{\frac{3n}{2\log^{(2)} n}} \tilde{j}_3(n) \right].$$  With the assumption that $\tilde{j}_3(n) \ll \sqrt{n \log^{(2)} n} $, we see that the second term in the right-hand side of $t_f$ would be negligible.

For the convenience of the reader, we will justify these choices as follows. Notice that if one were concerned with the exponential factor, the probability of making the transition from $v_{i+1} \to v_i$ would be,
\begin{equation*}
\exp\left[- \frac{\|v_{i+1} - v_i\|^2}{\frac{2}{3}(t_{i+1}- t_{i})}\right] = \exp\left[-(1-\kappa)\sqrt{\frac{3 \log^{(2)} n}{2n}} \|v_{i+1} - v_i\|\right].
\end{equation*}
Multiplying these probabilities together for all paths would demonstrate that the probability of making all such transitions would be,
\begin{equation*}
\exp\left[ - (1-\kappa) \log^{(2)} n  - (1-\kappa)\sqrt{\frac{3 \log^{(2)} n}{2n} } \tilde{j}_3(n) \right].
\end{equation*}
The factor of $\log^{(2)} n$ is exactly the threshold between whether an event could occur infinitely often or not. We are free to choose $\kappa$ arbitrarily small.

\subsubsection{Necessary Notation}

Let $\delta>0$ be a small parameter. 
In what follows, we consider the following events. Let $v_i$ be a vertex such that the transition $v_i \to v_{i+1}$ lies in the $x-y$ plane; namely, $v_i \to v_{i+1}$ is an edge along a cross-sectional square of the cube. Assume that the edge $v_i \to v_{i+1}$ goes in the coordinate direction $j$.

We construct the following event,
\begin{equation} \label{eq:defFE}
\begin{aligned}
\mathcal{FE}_i: = &\{  |S^j_{t_{i+1}} - v^j_{i+1}| \le \delta \tilde{j}_3(n) \} \\
&\bigcap\bigcap_{k \ne j} \left\{\max_{l \in [t_i,t_{i+1}]} |S^k_l - S^k_{t_i}| \le \sqrt{ \tilde{j}_3(n) \sqrt{\frac{2n}{3\log^{(2)} n}}} \log^{(4)} n \right \}.
\end{aligned}
\end{equation}
In the expression above, we use the superscript $j=\{x,y,z\}$ to denote the coordinate in the given axis of both the walk and the vertex. To simplify notation when used in the future, we use the notation
\begin{equation*}
\mathcal{TE}_i:= \{  |S^j_{t_{i+1}} - v^j_{i+1}| \le \delta \tilde{j}_3(n) \},
\end{equation*}
for the first line in the event above,
\begin{equation*}
\mathcal{BE}_i:= \bigcap_{k \ne j} \left\{\max_{l \in [t_i,t_{i+1}]} |S^k_l - S^k_{t_i}| \le \sqrt{ \tilde{j}_3(n) \sqrt{\frac{2n}{3\log^{(2)} n}}} \log^{(4)} n \right \}.
\end{equation*}

Now, if the edge $v_i \to v_{i+1}$ traverses the vertical direction, we define the event,
\begin{equation} \label{eq:defFE2}
\begin{aligned}
\mathcal{FE}_i:= & \left\{  |S^z_{t_{i+1}} - v^z_{i+1}| \le \delta \frac{4 (\tilde{j}_3(n))^2}{\sqrt{\frac{2}{3}n \log^{(2)} n}} \right\}\\
& \bigcap \bigcap_{k \ne z} \left\{\max_{l \in [t_i,t_{i+1}]} |S^k_l - S^k_{t_i}| \le \sqrt{ \frac{4 (\tilde{j}_3(n))^2}{\sqrt{\frac{2}{3}n \log^{(2)} n}}  \sqrt{\frac{2n}{3\log^{(2)} n}}} \log^{(4)} n \right \}.
\end{aligned}
\end{equation}
As before, we let the event on the first line above be denoted as $\mathcal{TE}_i$ and the second event be denoted as $\mathcal{BE}_i$

Now, we consider the collection of events
\begin{equation} \label{eq:deffecol}
\mathcal{FE}^i := \cap_{j=0}^{i} \mathcal{FE}_j.
\end{equation}
We are especially concerned with $\mathcal{FE}^{f-1}$, which is the intersection of all these events. On this intersection, we can derive various nice properties of the walks that contribute to these events.

\begin{lem} \label{lem:verttranslate}
Recall that for any integer $j$, the vertices $v_{5j}, v_{5j+1}, v_{5j+2}, v_{5j+3}, v_{5j+4}$ correspond to the vertices at a cross-section of height $h_{-\ell+ j}$ for $\ell:=\frac{\sqrt{\frac{2}{3}n \log^{(2)} n}}{8 \tilde{j}_3(n)}$. Under the event $\mathcal{FE}^{f-1}$, for times $[t_{5j},t_{5j+4}]$ and $ [t_{5j+5},t_{5j+9}]$, it must necessarily be the case that the walks $S_{[t_{5j},t_{5j+4}]}$ and $S_{[t_{5j+5},t_{5j+9}]}$ are completely disjoint. Namely, for any $s_1 \in [t_{5j},t_{5j+4}] $ and $s_2 \in [t_{5j+5},t_{5j+9}]$, we have,
\begin{equation} \label{eq:zfluc}
 |S^z_{s_1} - S^z_{s_2}| \ge (1-3 \delta)\frac{4 (\tilde{j}_3(n))^2}{\sqrt{\frac{2}{3}n \log^{(2)} n}}. 
\end{equation}

\end{lem}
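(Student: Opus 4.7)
The plan is to control the $z$-coordinate of $S$ separately on the two time intervals by combining the terminal ($\mathcal{TE}$) bounds from the vertical edges bordering each cross-section with the transverse fluctuation ($\mathcal{BE}$) bounds from the horizontal edges inside each cross-section, and then to conclude via the reverse triangle inequality. Write $g := h_{-\ell+j+1}-h_{-\ell+j} = \frac{4\tilde{j}_3(n)^2}{\sqrt{(2/3)n\log_2 n}}$ for the vertical spacing and $F := \sqrt{\tilde{j}_3(n)\sqrt{2n/(3\log_2 n)}}\log_4 n$ for the per-edge $z$-fluctuation bound that appears in the horizontal-edge definition of $\mathcal{BE}$. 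First I would use the endpoint events: the vertical edge $v_{5j-1}\to v_{5j}$ gives $\mathcal{TE}_{5j-1}\subset\mathcal{FE}_{5j-1}$, yielding $|S^z_{t_{5j}}-h_{-\ell+j}|\le \delta g$, and the vertical edge $v_{5j+4}\to v_{5j+5}$ gives $\mathcal{TE}_{5j+4}$, yielding $|S^z_{t_{5j+5}}-h_{-\ell+j+1}|\le \delta g$. These anchor the walk near the correct heights at the start of each horizontal segment.

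Next I would bound the fluctuation of $S^z$ during traversal of each cross-section. For $s_1\in[t_{5j},t_{5j+4}]$, write $s_1\in[t_{5j+i},t_{5j+i+1}]$ with $0\le i \le 3$; each of the four edges $v_{5j+k}\to v_{5j+k+1}$ with $0\le k\le 3$ lies in the $x$- or $y$-direction, so $\mathcal{BE}_{5j+k}$ contains the estimate $|S^z_l-S^z_{t_{5j+k}}|\le F$ for $l\in[t_{5j+k},t_{5j+k+1}]$. Chaining these four bounds via the triangle inequality gives $|S^z_{s_1}-S^z_{t_{5j}}|\le 4F$, and the same argument applied to the upper cross-section gives $|S^z_{s_2}-S^z_{t_{5j+5}}|\le 4F$. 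Combining with the anchors produces $|S^z_{s_1}-h_{-\ell+j}|\le \delta g + 4F$ and $|S^z_{s_2}-h_{-\ell+j+1}|\le \delta g + 4F$.

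The crucial quantitative step, and the one I expect to be the main obstacle, is to verify that $F\ll g$, so that $4F\le \tfrac{1}{2}\delta g$ for all $n$ sufficiently large. A direct computation yields
\[
(g/F)^2 \;\asymp\; \tilde{j}_3(n)^3\, n^{-3/2}(\log_2 n)^{-1/2}(\log_4 n)^{-2}.
\]
Under the case~(i) assumption $k_n\to\infty$, equivalently $j_3(n)\gg \sqrt{n\log_2 n}/\log_3 n$, this becomes $(g/F)^2 \asymp k_n^3 \log_2 n/((\log_3 n)^3 (\log_4 n)^2)\to\infty$, as required. This is precisely where the lower bound on $j_3(n)$ enters: if the diameter were smaller, the transverse fluctuations of the walk during a horizontal edge would be comparable to (or larger than) the vertical spacing between adjacent cross-sections, and the separate ``floors'' of the construction could no longer be kept disjoint in the $z$-coordinate.

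Finally, for $n$ large we obtain $|S^z_{s_1}-h_{-\ell+j}|\le \tfrac{3}{2}\delta g$ and $|S^z_{s_2}-h_{-\ell+j+1}|\le \tfrac{3}{2}\delta g$, and the reverse triangle inequality closes the argument:
\[
|S^z_{s_1}-S^z_{s_2}| \;\ge\; (h_{-\ell+j+1}-h_{-\ell+j}) - |S^z_{s_1}-h_{-\ell+j}| - |S^z_{s_2}-h_{-\ell+j+1}| \;\ge\; g - 3\delta g \;=\; (1-3\delta)g,
\]
which is exactly \eqref{eq:zfluc}.
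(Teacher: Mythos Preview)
Your proof is correct and follows essentially the same approach as the paper: both arguments anchor $S^z$ near the correct height using the $\mathcal{TE}$ bound from the bordering vertical edge, control the in-floor deviation by chaining the four horizontal $\mathcal{BE}$ bounds to get $|S^z_t - h_{-\ell+j}|\le \delta g + 4F$, and then invoke $F\ll g$ to absorb the fluctuation term before applying the reverse triangle inequality. Your computation of $(g/F)^2\asymp k_n^3\log_2 n/((\log_3 n)^3(\log_4 n)^2)\to\infty$ is a slightly more explicit version of the paper's order comparison (``the first term is of order $k_n^2\sqrt{n\log_2 n}/(\log_3 n)^2$ while the second is of order $\sqrt{k_n n/\log_3 n}\,\log_4 n$''), but the content is the same.
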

\begin{proof}
We combine the estimates about $|S_{t_{i+1}}^z -v_{i+1}^z| $ when $\mathcal{FE}_i$ corresponds to a vertical edge and estimates about $\max|S_l^z - S_{t_i}^z|$ from $\mathcal{FE}_i$ when $i$ does not correspond to a vertical edge. 
We can ensure that for times $t \in [t_{5j}, t_{5j+4}]$, we have
\begin{equation*}\
|S_t^z - h_{-\ell + j}| \le \delta \frac{4 (\tilde{j}_3(n))^2}{\sqrt{\frac{2}{3}n \log^{(2)} n}} + 4  \sqrt{ \tilde{j}_3(n) \sqrt{\frac{2n}{3\log^{(2)} n}}} \log^{(4)} n.
\end{equation*}

Notice that the first term on the right-hand side is of order $\frac{k_n^2 \sqrt{n \log^{(2)} n}}{(\log^{(3)} n)^2}$ while the second term is of order $\sqrt{k_n \frac{n}{\log^{(3)} n}}\log^{(4)} n$; this latter term is significantly smaller than the first. 
In particular, this means that for times $[t_{5j},t_{5j+4}]$ and $ [t_{5j+5},t_{5j+9}]$, it must necessarily be the case that the walks $S_{[t_{5j},t_{5j+4}]}$ and $S_{[t_{5j+5},t_{5j+9}]}$ are completely disjoint. Recall that the closest vertical distance they have from each other can be bounded from below by $(1-3 \delta)\frac{4 (\tilde{j}_3(n))^2}{\sqrt{\frac{2}{3}n \log^{(2)} n}}$. 

\end{proof}

Secondly, we also have some control over the movement of $S_t$ along horizontal edges.
\begin{lem} \label{lem:probFE}
Let $v_i \to v_{i+1}$ be an edge along one of the square cross-sections of the sphere. Let $j\in\{x,y\}$ denote the coordinate direction of this edge.  In the event $\mathcal{FE}^{f-1}$, we must have that,
\begin{equation} \label{eq:flucxy}
(1- 3 \delta) \tilde{j}_3(n) < |S^j_{t_i} - S^j_{t_{i+1}}| < (1+ 3 \delta) \tilde{j}_3(n).
\end{equation}
As a consequence of this as well as equation \eqref{eq:zfluc} we can deduce that,
\begin{equation*}
\mathbb{P}(\mathcal{FE}^{f-1}) \ge \exp\left[-(1-\kappa)(1+4 \delta)^2 \left(\log^{(2)} n + \sqrt{\frac{3 \log^{(2)} n}{2n}} \tilde{j}_3(n) \right)\right].
\end{equation*}
\end{lem}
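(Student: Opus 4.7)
The plan is to prove the two assertions separately. For the fluctuation bound \eqref{eq:flucxy}, I would argue by a telescoping triangle-inequality along the sequence of vertices $v_0, v_1, \ldots, v_f$. The event $\mathcal{TE}_i$ pins the coordinate of $S$ in the edge direction $j$ at time $t_{i+1}$ to within $\delta \tilde{j}_3(n)$ of $v^j_{i+1}$; to obtain \eqref{eq:flucxy} I also need the analogous bound at time $t_i$. The most recent earlier index $i'$ whose edge went in direction $j$ gives $|S^j_{t_{i'+1}} - v^j_{i'+1}| \le \delta \tilde{j}_3(n)$, and between indices $i'+1$ and $i$ only $O(1)$ intermediate edges separate the two events (since the four horizontal edges around any single cross-section already cover both horizontal directions, and at most one vertical edge can intervene). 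Each such intermediate edge contributes, via the corresponding $\mathcal{BE}$ event, a transverse $j$-increment of size at most $\sqrt{\tilde{j}_3(n)\sqrt{2n/(3\log_2 n)}}\log_4 n$, which is $o(\tilde{j}_3(n))$. A triangle inequality then gives $|S^j_{t_i} - v^j_i| \le 2\delta \tilde{j}_3(n)$ for $n$ large, and combining with $|v^j_{i+1} - v^j_i| = \tilde{j}_3(n)$ yields \eqref{eq:flucxy}.

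For the probability lower bound, I would use the Markov property to factor $\mathbb{P}(\mathcal{FE}^{f-1})$ as an iterated product of one-edge transition probabilities, and then bound each factor by a Gaussian estimate. By Skorokhod's embedding (Lemma~3.1 of \cite{LA91}, as invoked in Lemma~\ref{lem:Uuprbnd}), the increment $S_{t_{i+1}}-S_{t_i}$ is comparable to a 3-dimensional Gaussian with per-coordinate variance $(t_{i+1}-t_i)/3$. The event $\mathcal{FE}_i$ asks that the coordinate in the edge direction land in a window of width $2\delta\tilde{j}_3(n)$ around $v^j_{i+1}$ while the transverse coordinates stay inside a wider envelope; the latter constraint is automatic up to probability $1-o(1)$ because the $\mathcal{BE}_i$ envelope exceeds the typical Gaussian fluctuation $\sqrt{(t_{i+1}-t_i)/3}$ by a factor $\log_4 n$. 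Consequently $\mathbb{P}(\mathcal{FE}_i \mid S_{t_i} = x) \ge c\,\delta\tilde{j}_3(n)\,(t_{i+1}-t_i)^{-1/2}\exp[-3\|v_{i+1}-v_i\|^2(1+O(\delta))/(2(t_{i+1}-t_i))]$ uniformly for $x$ in the region left reachable by $\mathcal{FE}^{i-1}$.

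Multiplying across all $f$ edges and substituting $t_{i+1}-t_i = (1-\kappa)^{-1}\sqrt{3n/(2\log_2 n)}\,\|v_{i+1}-v_i\|$, the exponents telescope to $-(1-\kappa)(1+O(\delta))\sqrt{3\log_2 n/(2n)}\,d_f$, where $d_f \approx \sqrt{(2/3)n\log_2 n} + \tilde{j}_3(n)$ is the total perimeter of $\mathcal{P}$; the leading piece contributes $\log_2 n$ and the offset piece contributes the $\sqrt{3\log_2 n/(2n)}\tilde{j}_3(n)$ correction, and for sufficiently small $\delta$ this fits inside the $(1-\kappa)(1+4\delta)^2$ exponent in the claimed bound. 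The main obstacle will be making the single-edge Gaussian lower bound uniform in the starting point $x$ over the relevant conditional neighborhood of $v_i$, and absorbing the accumulated polynomial prefactors (one factor $\delta\tilde{j}_3(n)(t_{i+1}-t_i)^{-1/2}$ per edge, over $f \sim \sqrt{n\log_2 n}/\tilde{j}_3(n)$ edges) into the $(1+4\delta)^2$ slack; these are handled by a careful reflection-principle argument in the spirit of Lemma~\ref{est:lem1}, together with the observation that the conditional neighborhood of $v_i$ inherited from $\mathcal{FE}^{i-1}$ is much smaller than the $\mathcal{BE}_i$ envelope.
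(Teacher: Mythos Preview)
Your approach is essentially the same as the paper's: for \eqref{eq:flucxy} you use the triangle inequality combining the $\mathcal{TE}$ pins at edges in direction $j$ with the $\mathcal{BE}$ transverse controls on intervening edges (the paper simply points to the identical argument for \eqref{eq:zfluc}); for the probability bound you use the Markov-property factorization and a per-edge Gaussian lower bound, exactly as the paper does in \eqref{eq:MarkovProp}--\eqref{eq:removbad}.

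One small correction: your per-edge prefactor $c\,\delta\tilde{j}_3(n)(t_{i+1}-t_i)^{-1/2}$ is the density-times-width formula, valid only when the target window is narrow relative to $\sigma=\sqrt{(t_{i+1}-t_i)/3}$. Here $\delta\tilde{j}_3(n)/\sigma\asymp\delta\sqrt{k_n\log_2 n/\log_3 n}\gg 1$, so the correct lower bound is the Mill's-ratio tail $\mathbb{P}(X\ge(1-O(\delta))\tilde{j}_3(n))$, whose prefactor is $\sigma/\tilde{j}_3(n)\ll 1$ rather than $\gg 1$. This is the quantity you actually need to absorb. Fortunately the absorption still works: over $f\asymp\log_3 n/k_n$ edges the accumulated log-prefactor is $O((\log_3 n)^2/k_n)$, which is negligible against the $\delta\log_2 n$ slack between $(1+3\delta)^2$ and $(1+4\delta)^2$ once $k_n\to\infty$. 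The paper's proof sweeps this under the rug by asserting the per-edge bound without any prefactor; your instinct to track it explicitly is sound, just with the wrong form.
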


\begin{proof}
The proof of equation \eqref{eq:flucxy} is the same as that of equation \eqref{eq:zfluc}.

To determine the probability of the event $\mathcal{FE}^{f-1}$,
we appeal to the Markov property of the random walk. First, notice that,
\begin{equation} \label{eq:MarkovProp}
\mathbb{P}(\mathcal{FE}^i) = \sum_{p\in \mathbb{Z}^3} \mathbb{P}(\mathcal{FE}_i|S_{t_i}=p,\mathcal{FE}^{i-1}) \mathbb{P}(\mathcal{FE}^{i-1},S_{t_i} =p).
\end{equation}
Due to the equations \eqref{eq:flucxy} or \eqref{eq:zfluc},  (conditional on $S_{t_i}=p$ and $\mathcal{FE}^{i-1})$, we must have that $(1- 3 \delta)\|v_{i+1} -v_i\| \le |S^e_{t_{i+1}} - S^e_{t_i}| \le (1+3 \delta)\|v_{i+1} -v_i\|$ for the appropriate coordinate direction $e$. Thus, for all choices of the point $p$ where the probability $\mathbb{P}(\mathcal{FE}^{i-1},S_{t_i} =p)$ is non-zero, we must have that the conditional probability of ensuring the events $\mathcal{TE}_i$ is at least,
$$
\mathbb{P}(\mathcal{TE}_i| S_{t_i}=p, \mathcal{FE}^{i-1}) \ge \exp\left[-(1-\kappa) (1+ 3 \delta)^2 \|v_{i+1} - v_i\|{ \sqrt{\frac{3 \log^{(2)} n}{2n}}}\right].
$$

Now,  we follow the logic of the first part of the proof of Lemma \ref{lem:TransverseFluc} so that we can deal with the transverse fluctuations of $\max|S^k_l - S^k_{t_i}|$ in directions orthogonal to the edge $v_i \to v_{i+1}$. Namely, we can apply the reflection principle to see that the probability can be bounded as,
\begin{equation*}
\mathbb{P}\left(\mathcal{BE}_i^c | \mathcal{TE}_i, S_{t_i}=p, \mathcal{FE}^{i-1} \right) \le C \exp[-(1-\kappa) (\log^{(4)} n)^2].
\end{equation*}

Thus,
\begin{equation} \label{eq:removbad}
\begin{aligned}
\mathbb{P}(\mathcal{FE}_i| S_{t_i}=p, \mathcal{FE}^{i-1})& = \mathbb{P}(\mathcal{TE}_i |S_{t_i}=p, \mathcal{FE}^{i-1})(1- \mathcal{P}(\mathcal{BE}_i^c|\mathcal{TE}_i,S_{t_i}=p, \mathcal{FE}^{i-1}))\\& \ge \exp\left[-(1-\kappa)(1+4 \delta)^2\|v_{i+1} - v_i\|{ \sqrt{\frac{3 \log^{(2)} n}{2n}}}\right], 
\end{aligned}
\end{equation}
whenever $\mathbb{P}(\mathcal{FE}^{i-1}, S_{t_i}=p)$ is non-zero.

Iterating the bound in equation \eqref{eq:MarkovProp} will give us the desired bound on $\mathbb{P}(\mathcal{FE}^{f-1})$.
\end{proof}

Now, given that the event $\mathcal{FE}^{f-1}$ holds, we can now detail further properties of the Green functions on each of the segments.  Consider each $i$ that corresponds to some path $v_i \to v_{i+1}$ that is  part of a cross-sectional square; this path is associated with times $t_i \to t_{i+1}$. Let  $e_i$ be the unit vector with orientation in the direction of the path $v_i \to v_{i+1}$.  We say that a time $l \in [t_i,t_{i+1}]$ is a good time if the following is true.
We have the event,
\begin{equation*}
\mathcal{GF}_{i,l}:= \left\{ \sum_{t=t_{i}}^{t_{i+1}} G(S_t-S_{l}) \le \frac{(1+ 20\delta)}{1-\kappa} \frac{n}{h_3(n)}  \right\}.
\end{equation*}
This event ensures that the sum of the Green's function estimates along the segment containing $S_i$ are not very large.
We also define the event,
\begin{equation*}
\begin{aligned}
\mathcal{GL}_{i,l}:= & \bigcap_{t \in \mathcal{T}} \bigg\{  (1+ 10\delta)  \frac{|\langle S_{t_{i+1}} - S_{t_i}, e_i\rangle| }{t_{i+1} -t_i}  \ge \frac{\langle S_t - S_l,  e_i\rangle }{t-l} \ge (1- 10\delta) \frac{|\langle S_{t_{i+1}}-S_{t_i},e_i \rangle|}{t_{i+1} -t_i} \bigg\}, \\
& \mathcal{T}:=   [t -  (\log^{(2)} n)^{-1}(t_{i+1} -t_i), t+  (\log^{(2)} n)^{-1}(t_{i+1} -t_i)]^c \cap  [t_i,t_{i+1}].
\end{aligned}
\end{equation*}
This event ensures the random walk grows linearly in the coordinate direction $e_i$  around the point $S_l$. Ideally, we would want both of the events $\mathcal{GF}_{i,l}$ and $\mathcal{GL}_{i,l}$ to hold at all times $l$ that belong to the segment $[t_i,t_{i+1}]$. Unfortunately, it is not possible to ensure that these events are held at all times; instead, we will try to ensure that these events hold for most of the segments. Indeed, we define the event,
\begin{equation*}
\mathcal{SG}_i:=\{|\{l :(\mathcal{GL}_{i,l} \cap \mathcal{GF}_{i,l})^c \text{ occurs} \} | \le \delta |t_{i+1} -t_i| \}.
\end{equation*}

In other words, if the event $\mathcal{SG}_i$ holds, at most a $\delta$ fraction of the number of points along a segment is `bad' in the sense that either $\mathcal{GL}_{i,l}^c$ or $\mathcal{GF}_{i,l}^c$ holds.

We have the following lemma.
\begin{lem} \label{lem:bndprobFESG}
Let $i$ be an index such that $v_i \to v_{i+1}$ is an edge that traverses along a cross-sectional square of the  cube. 
Consider $p$ such that $\mathbb{P}(\mathcal{FE}^{i-1},S_{t_i}=p)$ is non-zero. 
There is some $\zeta_n$ with $\lim_{n \to \infty} \zeta_n =0$ such that, 
\begin{equation*}
\mathbb{P}(\mathcal{SG}_i^c|\mathcal{FE}_i, S_{t_i}=p, \mathcal{FE}^{i-1}) \le \zeta_n.
\end{equation*}

As a consequence,  we can deduce that,
\begin{equation}\label{eq:coversquare}
\mathbb{P}\left(\mathcal{FE}^{f-1} \bigcap_{i=0}^{f-1} \mathcal{SG}_i\right) \ge \exp\left[-(1-\kappa)(1+5 \delta)^2 \left(\log^{(2)} n  + \sqrt{\frac{3 \log^{(2)} n}{2n}} \tilde{j}_3(n) \right)\right],
\end{equation}
for $n$ sufficiently large.
\end{lem}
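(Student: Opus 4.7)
The plan is to establish the single-$l$ bound $\mathbb{P}((\mathcal{GL}_{i,l} \cap \mathcal{GF}_{i,l})^c \mid \mathcal{FE}_i, S_{t_i}=p, \mathcal{FE}^{i-1}) \le \zeta_n'$ uniformly in $l$, then pass to the event $\mathcal{SG}_i$ by Markov's inequality applied to the random count $N_i:=|\{l:(\mathcal{GL}_{i,l}\cap \mathcal{GF}_{i,l})^c\}|$. Concretely, since $\mathcal{SG}_i^c=\{N_i>\delta(t_{i+1}-t_i)\}$, one has $\mathbb{P}(\mathcal{SG}_i^c\mid\cdot)\le \mathbb{E}[N_i\mid\cdot]/(\delta(t_{i+1}-t_i))\le \zeta_n'/\delta$, and we set $\zeta_n:=\zeta_n'/\delta$. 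For the linear-growth component $\mathcal{GL}_{i,l}$, conditioning on $\mathcal{FE}_i$ pins the coordinate $S^j$ at time $t_{i+1}$ within $\delta \tilde j_3(n)$ of $v^j_{i+1}$, so the segment is effectively a random-walk bridge whose total displacement in direction $j$ lies in $((1-3\delta)\tilde j_3(n),(1+3\delta)\tilde j_3(n))$ by Lemma \ref{lem:probFE}. For interior $l$ with $|l-t|\ge (t_{i+1}-t_i)/\log_2 n$, the fluctuation of $S^j_t - S^j_l$ around its linear interpolation is of order $\sqrt{|t-l|}$, while the mean displacement is $|t-l| \cdot \tilde j_3(n)/(t_{i+1}-t_i)$. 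Since in phase (i) with $k_n\to\infty$ the ratio fluctuation/mean is $O(\sqrt{\log_3 n/k_n})=o(1)$, a Skorokhod embedding into a Brownian bridge together with the Chernoff-type computation of Lemma \ref{est:lem1} yields $\mathbb{P}(\mathcal{GL}_{i,l}^c\mid \cdot)=o(1/\log_3 n)$ uniformly in $l$.

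For the Green-function event $\mathcal{GF}_{i,l}$, I would split the sum at the cutoff $K=(t_{i+1}-t_i)/\log_2 n$. On the far regime $|t-l|\ge K$, the linear-growth estimate together with an analog of Corollary \ref{cor:Bmiddle} recentered at $S_l$ (bounding transverse fluctuations by an $O(\theta)$ fraction of the principal direction) yields, mirroring Lemma \ref{thm:LowerboundofGreenspec},
\begin{equation*}
\sum_{K\le |t-l|\le t_{i+1}-t_i} G(S_t-S_l)\le \frac{3+o(1)}{2\pi\bigl(\frac{1-10\delta}{1-\kappa}+O(\theta)\bigr)\sqrt{2\log_2 n/(3n)}}\cdot 2\log\log_2 n=\frac{1+O(\delta+\theta)}{1-\kappa}\cdot\frac{n}{h_3(n)}.
\end{equation*}
For the near regime $|t-l|<K$, one estimates the expectation $\mathbb{E}[\sum_{|t-l|<K}G(S_t-S_l)\mid\cdot]=O(\sqrt{K})$ from the classical bound $\mathbb{E}[G(S_k)]=O(1/\sqrt{k})$ in $d=3$; here $K\asymp k_n n/(\log_2 n\log_3 n)$, and since the diameter constraint $j_3(n)\le \sqrt{(2/3)n\log_2 n}$ forces $k_n\le C\log_3 n$, we obtain $\sqrt{K}=O(\sqrt{n/\log_2 n})\ll n/h_3(n)$. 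Markov's inequality then shows the near contribution exceeds a $\delta$-fraction of $n/h_3(n)$ for at most an $o(1)$ fraction of $l$. Combining with the far-regime bound gives the claimed uniform bound $\mathbb{P}((\mathcal{GL}_{i,l}\cap \mathcal{GF}_{i,l})^c\mid\cdot)=o(1/\log_3 n)$, hence $\zeta_n=o(1/\log_3 n)$.

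To obtain \eqref{eq:coversquare}, the Markov property of the walk implies that $\mathcal{SG}_i$ depends, modulo the endpoints $S_{t_i},S_{t_{i+1}}$, only on the $i$-th segment, so the single-segment estimate propagates to $\mathbb{P}(\mathcal{SG}_i^c\mid \mathcal{FE}^{f-1})\le \zeta_n$. A union bound over the $f=O(\log_3 n/k_n)=O(\log_3 n)$ edges yields $\mathbb{P}(\bigcup_i \mathcal{SG}_i^c\mid \mathcal{FE}^{f-1})\le f\zeta_n=o(1)$, and multiplying by the bound of Lemma \ref{lem:probFE} absorbs the $(1-o(1))$ factor into the infinitesimal exponent shift from $(1+4\delta)^2$ to $(1+5\delta)^2$, as the lost factor $(\log n)^{-(2\delta+9\delta^2)(1-\kappa)}$ is much smaller than $1-o(1)$ for $n$ large. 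The principal obstacle is ensuring that the bridge estimate for $\mathcal{GL}_{i,l}$ holds with rate $o(1/\log_3 n)$ rather than merely $o(1)$, and that the transverse fluctuation control around the interior point $S_l$ (an internal analog of $\mathcal{BE}_i$) can be established uniformly over $l$; both require the careful reflection-principle and Skorokhod-embedding techniques already developed for Lemmas \ref{est:lem1} and \ref{lem:TransverseFluc}, but re-executed in the conditioned bridge setting.
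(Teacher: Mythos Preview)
Your overall strategy---bound the single-$l$ failure probability, pass to $\mathcal{SG}_i$ via Markov's inequality, then combine over segments---matches the paper's. The paper is briefer on the single-$l$ step: it simply invokes Corollary~\ref{cor:Inmiddle} for $\mathcal{GL}_{i,l}$ and \cite[Lemma~3.1]{DemboOkada} for $\mathcal{GF}_{i,l}$, obtaining only $\zeta_n\to 0$ with no quantitative rate. Your near/far decomposition of the Green's sum and the bridge computation for $\mathcal{GL}$ are reasonable substitutes, but they are extra work the paper does not do.

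The substantive divergence is in how you pass from the single-segment bound to \eqref{eq:coversquare}. You condition on the \emph{full} event $\mathcal{FE}^{f-1}$ and take a union bound over the $f$ segments; this forces you to (a) establish the rate $\zeta_n=o(1/f)$, which is the ``principal obstacle'' you flag, and (b) justify that $\mathbb{P}(\mathcal{SG}_i^c\mid \mathcal{FE}^{f-1})\le \zeta_n$, which actually requires the single-segment bound uniformly over \emph{both} endpoints $S_{t_i}=p$ and $S_{t_{i+1}}=q$---a strictly stronger statement than the one you proved (which averages over $S_{t_{i+1}}$). The paper avoids both issues by iterating the Markov property forward in $i$, exactly as in Lemma~\ref{lem:probFE}: one multiplies the factors $\mathbb{P}(\mathcal{FE}_i\cap\mathcal{SG}_i\mid S_{t_i}=p,\mathcal{FE}^{i-1})\ge (1-\zeta_n)\exp[-(1-\kappa)(1+4\delta)^2\|v_{i+1}-v_i\|\sqrt{3\log_2 n/(2n)}]$, only ever conditioning on the past. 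The accumulated factor $(1-\zeta_n)^f$ is then absorbed into the exponent shift from $(1+4\delta)^2$ to $(1+5\delta)^2$, which gives slack of order $\exp[-c\delta\log_2 n]$; since $f=O(\log_3 n/k_n)\ll\log_2 n$, any $\zeta_n\to 0$ suffices. So the rate you work to obtain is not needed, and the two-endpoint conditioning gap disappears.
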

\begin{proof}
By appealing to the logic of Corollary \ref{cor:Inmiddle}, we can determine that the probability of $\mathcal{GL}_{i,l}^c $ conditional on $\mathcal{FE}_i$, $S_{t_i}=p$, $\mathcal{FE}^{i-1}$ is less than $\frac{\zeta^1_n}{2}$  for some $\zeta^1_n$ such that $\lim_{n \to \infty}\zeta^1_n =0 $. As in Proposition \ref{prop:upperboundcap}, we can appeal to Markov's inequality  to deduce that with probability at most $\frac{\zeta^1_n}{\delta}$, that the number of vertices $l$ that satisfy $\mathcal{GL}_{i,l}^c$ is greater than $\frac{\delta}{2}|t_{i+1} - t_i|$.

 \cite[Lemma 3.1]{DemboOkada} allows us to deduce that $\mathbb{P}(\mathcal{GF}_{i,l}^c|\mathcal{FE}_i, S_{t_i}=p,\mathcal{FE}^{i-1}) \le \frac{\zeta^2_n}{2}$ for some $\zeta^2_n$ such that $\lim_{n \to \infty} \zeta^2_n = 0$. Again, by appealing to the Markov Property, we can deduce that with probability at most $\frac{\zeta_n^2}{\delta}$, the number of vertices $i$ that satisfy $\mathcal{GF}_{i,l}^c$  is greater than $\frac{\delta}{2}|t_{i+1} -t_i|$. 

By setting $\zeta_n= \frac{\zeta_n^1 + \zeta_n^2}{\delta}$, we see that,
\begin{equation*}
\mathbb{P}(\mathcal{SG}_i^c|\mathcal{FE}_i, S_{t_i}=p, \mathcal{FE}^{i-1}) \le \zeta_n.
\end{equation*}

We can then follow the method of Lemma \ref{lem:probFE} by iterating the Markov property as in equation \eqref{eq:MarkovProp} and the subsequent equation \eqref{eq:removbad} to deduce equation \eqref{eq:coversquare}.  
\end{proof}

If the event $\mathcal{FE}^{f-1} \bigcap_{i=0}^{f-1} \mathcal{SG}_i$ holds, then we have good estimates of the capacity if we restrict to the subset of those vertices that satisfy the nice properties outlined in $\mathcal{GF}_{i,l}$ and $\mathcal{GL}_{i,l}$.  Indeed, we have the following theorem.

\begin{thm}\label{thm:lowerbndicap}
Consider a walk that satisfies the event $\mathcal{FE}^{f-1} \bigcap_{i=0}^{f-1} \mathcal{SG}_i$. In this case, we see that there is some constant $C_{\delta}$ with $C_{\delta} \to 0$ as $\delta \to 0$, such that,
\begin{equation*}
\ca(S_{[0,t_f]}) \ge (1-\kappa) (1- C_{\delta}) h_3(n).
\end{equation*}

\end{thm}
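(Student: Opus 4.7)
The plan is to apply the elementary capacity lower bound $\ca(A) \ge |A'|/\max_{a\in A'}\sum_{a'\in A} G(a-a')$ from \cite[Lemma 2.4]{DemboOkada} to a judicious subset $\mathcal{V}$ of walk indices, exactly as was done in Proposition \ref{prop:upperboundcap}. On the event $\mathcal{FE}^{f-1} \cap \bigcap_i \mathcal{SG}_i$, the walk tracks the deterministic path $\mathcal{P}$ tightly enough that the Green's function sums reduce to discrete line integrals over the horizontal segments and cross-sections of $\mathcal{P}$.

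First I identify the good set $\mathcal{V}$: an index $l$ belongs to $\mathcal{V}$ if it lies in some horizontal segment $[t_i,t_{i+1}]$, is at distance at least $\delta(t_{i+1}-t_i)$ from both endpoints, and satisfies both $\mathcal{GL}_{i,l}$ and $\mathcal{GF}_{i,l}$. By definition of $\mathcal{SG}_i$, at most a $\delta$-fraction of indices per horizontal segment fail the latter two conditions, and the aggregate time on vertical edges equals $\tilde{j}_3(n)\,\rho = o(n)$ under the running assumption $\tilde{j}_3(n) \ll \sqrt{n\log_2 n}$, where $\rho := (1-\kappa)^{-1}\sqrt{3n/(2\log_2 n)}$ is the density of walk times per unit arclength of $\mathcal{P}$. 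Consequently $|\mathcal{V}| \ge (1-O(\delta))\, n/(1-\kappa)$.

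The main estimate is that, uniformly over $l\in\mathcal{V}$ lying on cross-section $k$,
\begin{equation*}
\Sigma_l := \sum_{t=0}^{t_f} G(S_t - S_l) \le \frac{1+O(\delta)}{1-\kappa}\cdot\frac{n}{h_3(n)}.
\end{equation*}
I split $\Sigma_l$ into (a) the contribution of the current segment, (b) the three other horizontal segments on cross-section $k$, and (c) segments on cross-sections $k'\ne k$ (and the short vertical connectors). Part (a) is bounded directly by $\mathcal{GF}_{i,l}$. For (b), the buffer $\delta(t_{i+1}-t_i)$ forces $S_l$ to be at horizontal distance at least $\delta \tilde{j}_3(n)$ from every corner of the square, so using $G(x) \le C/\|x\|$ and $\int_0^{\tilde{j}_3(n)}(s^2+(\delta\tilde{j}_3(n))^2)^{-1/2}\,ds = O(\log(1/\delta))$ gives total contribution $O(\rho\log(1/\delta)) = o(n/h_3(n))$, because $\rho/(n/h_3(n)) = O(1/\log_3 n)$. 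For (c), Lemma \ref{lem:verttranslate} yields the vertical separation $V_{k,k'} \ge (1-3\delta)|k'-k|\cdot 4\tilde{j}_3(n)^2/\sqrt{(2/3)n\log_2 n}$, and the line integral over a square of perimeter $4\tilde{j}_3(n)$ produces a per-cross-section contribution $O(\rho\,[\log(\tilde{j}_3(n)/V_{k,k'})+1])$. Summing $\sum_{j=1}^{\ell}[\log(\ell/j)+1]=O(\ell)$ yields total contribution $O(\rho\ell) = O(n/\tilde{j}_3(n))$, which equals $O(n/h_3(n))/k_n = o(n/h_3(n))$ precisely because $k_n\to\infty$ in phase (i).

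Combining these estimates gives
\begin{equation*}
\ca(S_{[0,t_f]}) \ge \frac{|\mathcal{V}|}{\max_{l\in\mathcal{V}}\Sigma_l} \ge \frac{(1-O(\delta))\,n/(1-\kappa)}{(1+O(\delta))\,n/[(1-\kappa)h_3(n)]} \ge (1-\kappa)(1-C_\delta) h_3(n),
\end{equation*}
after absorbing constants into $C_\delta$. The main obstacle is the bookkeeping in step (c): the mesoscopic cross-section sum must be controlled using both the geometric separation from Lemma \ref{lem:verttranslate} and the gain $1/k_n$ arising from the assumption $k_n\to\infty$. This is exactly the calculation that breaks in phases (ii) and (iii), where the cross-section term becomes comparable to or dominates the local term, in agreement with the three distinct $\limsup$ normalizations in Corollary \ref{main:cor}.
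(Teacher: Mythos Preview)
Your approach is essentially the same as the paper's: restrict to the good indices $\mathcal{L}$ satisfying $\mathcal{GF}_{i,l}\cap\mathcal{GL}_{i,l}$, apply the elementary lower bound $\ca(S)\ge |S|/\max_s\sum_{s'}G(s-s')$, and split the Green's function sum into the local segment (controlled by $\mathcal{GF}$) plus the cross-section contributions (controlled by the vertical separation of Lemma~\ref{lem:verttranslate} and summed to $O((n/h_3(n))/k_n)$). Two remarks: your buffer condition on $\mathcal{V}$ to handle the three neighbouring edges at the same height is a clean addition that the paper leaves implicit; conversely, your ``line integral'' for part~(c) is only rigorous once you invoke $\mathcal{GL}$ on a good reference index of each translated segment to convert walk time to arclength, exactly as the paper does in the displayed computation \eqref{eq:compsum}.
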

\begin{proof}
 As stated in Lemma \ref{lem:uprlwrcap}, it must be the case that for any set $S$ of points, we have that,
\begin{equation} \label{eq:caplwrbnd}
\ca(S) \ge \frac{|S|}{\max_{s \in S} \sum_{s' \in S} G(s-s')}.
\end{equation}
As we have mentioned before stating the theorem, we will apply the inequality above when we consider the subset of those points $S_{l}$ that satisfy the events $\mathcal{GF}_{i,{ l}}$ and $\mathcal{GL}_{i,{l}}$. We call the collection of such indices $\mathcal{L}.$ Now, since the event $\mathcal{SG}_i$ holds for each $i$, we must have that the set $\mathcal{L}$ contains at least $(1-\delta) t_f$ elements. Thus, the size of $|\mathcal{L}|$ is at least $(1- \delta) n$ on $\mathcal{GF}_{i,{ l}}$. We remark here that the set $\mathcal{L}$ does not include times that correspond to a traversal along a vertical segment; we only care about the traversal along one of the segments on a cross-sectional plane.  

Now, we bound the denominator. The sum $\sum_{{ l'} \in \mathcal{L}} G(S_{l}-S_{l'})$ comes from two parts. The first is the local contribution along the same segment that contains the point $S_l$. The other contribution is from different segments.  Because of the event $\mathcal{GF}_{i,l}$, we can ensure that the contribution of the Green's function from the local segment can be bounded by $\frac{1 + 20 \delta}{1-\kappa} \frac{n}{h_3(n)}$. 
The contribution to the Green's function sum from points of distance of order $j_3(n)$ from  $S_l$  can be bounded by $\frac{n}{j_3(n)} \ll \frac{\sqrt{n \log^{(2)} n}}{\log^{(3)} n}$.  

It suffices to control the contribution to $\sum_{l' \in \mathcal{L}} G(S_l-S_{ l'})$ from the points $S_{l'}$ that are not in the same segment as $S_{l}$ but are not of a distance of order $j_3(n)$. Suppose that the point $S_{l}$  corresponds to an edge $v_{5(m+\ell)+i} \to v_{5(m+\ell)+i+1}$ that is on the cross section at height $h_m$. Then, the points $S_{l'}$ that we are most concerned about are those that correspond to segments $v_{5(m' + \ell) +i} \to v_{5(m'+\ell) + i+1}$ that belong to height $h_{m'}$ for $m' \ne m$  and are obtained as a vertical translation of the original segment $v_i \to v_{i+1}$. 

Fix a point $S_l$ on segment $v_{5(m+\ell)+i} \to v_{5(m+\ell)+(i+1)}$;  call the coordinate direction of this segment $d$, and call the unit vector that has the orientation of this edge $e_i$ (note that this edge does not depend on the value of $m$). We now define $S_{l^+}$ and $S_{l^-}$ as follows. We let $S_{l^+}$ be the point in $\mathcal{L}$ and on the translated segment  $v_{5(m'+\ell)+i} \to v_{5(m'+\ell)+i+1}$ at height $h_{m'}$  such that $\langle S_{l^+} - S_{l}, e_i \rangle $ is smallest but still positive; we let $S_{l^-}$ be corresponding point  on the same segment such that $\langle S_{l^-} - S_{l}, e_i \rangle $ is largest but still negative. Let $\tilde{l}$ be another index in $\mathcal{L}$ that still belongs to this segment.  
Lemma \ref{lem:verttranslate} ensures that 
$$|S_l^z - S_{\tilde{l}}^z |\ge (m - m' - 3 \delta) \frac{4 (\tilde{j}_3 (n))^2}{\sqrt{\frac{2}{3} n \log^{(2)} n}}.$$  Furthermore, the condition on $\mathcal{GL}_{5(m'+\ell)+i , l^{\pm}}$ would also imply the linear growth,
$$
|S_l^d - S^d_{l^+ + j}|  \ge j(1-\kappa) \sqrt{\frac{2\log^{(2)} n}{3n}}(1-10 \delta)(1-3 \delta), |j| \ge (\log^{(2)} n)^{-1} \frac{1}{1-\kappa}\sqrt{\frac{3n}{2\log^{(2)} n}} \tilde{j}_3(n). 
$$
We have similar estimates if we consider times of the form $l^{-} - j$. 

Recall that $\tilde{j}_3(n):= K k_n \sqrt{n \log^{(2)} n}(\log^{(3)} n)^{-1}$. 
Thus, we have that,
\begin{equation} \label{eq:compsum}
\begin{aligned}
&\sum_{\substack{l' \in \mathcal{L}\\ l' \in [t_{5(m'+\ell) + i}, t_{5(m'+ \ell)+(i+1)}]}} G(S_l-S_{l'}) \le  \frac{3}{2\pi}\frac{(\log^{(2)} n)^{-1} \frac{2}{1-\kappa}\sqrt{\frac{3n}{2\log^{(2)} n}} \tilde{j}_3(n)}{(m - m' - 3 \delta) \frac{4 (\tilde{j}_3 (n))^2}{\sqrt{\frac{2}{3} n \log^{(2)} n}}}\\
&+\frac{3}{\pi} \sum_{j=0}^{ \frac{1}{1-\kappa}\sqrt{\frac{3n}{2\log^{(2)} n}} \tilde{j}_3(n)} \frac{1}{\sqrt{\left|(m - m' - 3 \delta) \frac{4 (\tilde{j}_3 (n))^2}{\sqrt{\frac{2}{3} n \log^{(2)} n}} \right|^2 +\left|j(1-\kappa) \sqrt{\frac{2\log^{(2)} n}{3n}}(1-10 \delta)(1-3 \delta)\right|^2}}\\
& \le C \frac{\sqrt{n}\log^{(3)} n}{(m-m')  k_n(\log^{(2)} n)^{3/2}} + C  \sqrt{\frac{n}{\log^{(2)} n}}\int_{0}^1 \frac{\text{d}j}{\sqrt{j^2 +  \frac{k_n^2 (m-m')^2 }{(\log^{(3)} n)^2}}}\\
&\le  C \frac{\sqrt{n}\log^{(3)} n}{(m-m')k_n (\log^{(2)} n)^{3/2}} + C  \sqrt{\frac{n}{\log^{(2)} n}} \log\left| \frac{\log^{(3)} n}{ k_n|m-m'|} + \frac{\sqrt{(\log^{(3)} n)^2 + k_n^2(m-m')^2}}{k_n(m-m')} \right|.
\end{aligned}
\end{equation}
In the third line, we approximated the discrete sum by an integral, and  $C$ is some universal constant.

We now need to sum up the quantity on the last line of \eqref{eq:compsum} over all $|m-m'| \le 2 \frac{\log^{(3)} n}{k_n}$. 
Now, if we sum up the first term on the last line of \eqref{eq:compsum} in $m$, we see that the sum will be bounded by $$ \frac{\sqrt{n} \log^{(3)} n \log^{(4)} n}{k_n (\log^{(2)} n)^{3/2}} \ll \frac{n}{h_3(n)}.$$  For the second term on the last line, we can divide it into two regions based on whether $|m-m'| \le \frac{\log^{(3)} n}{k_n}$ or $|m-m'| \ge \frac{\log^{(3)} n}{ k_n}$.  In the first case, we can bound the term inside the logarithm by $\log\left| \frac{3 \log^{(3)} n}{ k_n|m- m'|} \right|$.
We also see that,
\begin{equation*}
\begin{aligned}
&\sum_{|m-m'|=1}^{\frac{\log^{(3)} n}{k_n}} \log \left| \frac{3 \log^{(3)} n}{k_n|m-m'|} \right| \le \frac{\log^{(3)} n}{k_n} \log\left| \frac{3 \log^{(3)} n}{k_n} \right|  - \sum_{|m-m'|=1}^{\frac{\log^{(3)} n}{k_n}} \log|m-m'|\\
&\le \frac{\log^{(3)} n}{k_n} \log\left| \frac{3 \log^{(3)} n}{k_n} \right| - \frac{\log^{(3)} n}{k_n} \log \frac{\log^{(3)} n}{k_n} + \frac{\log^{(3)} n}{k_n} \le C \frac{\log^{(3)} n}{k_n}.
\end{aligned}
\end{equation*}

We also know that $\sqrt{1 + x^2} \le 1+ 2x$ when $0 \le x \le 1$ and $\log|1+x| \le Cx$ when $0 \le x \le 3$. As a consequence, when $|m-m'| \ge \frac{\log^{(3)} n}{k_n}$, we can bound the term inside the logarithm of the second term in the last line of \eqref{eq:compsum} by $ C \frac{\log^{(3)} n}{k_n |m- m'|}$. Summing up, we get,
$$
\sum_{|m-m'| = \frac{\log^{(3)} n}{k_n}}^{2\frac{\log^{(3)} n}{k_n}} C \frac{\log^{(3)} n}{k_n|m-m'|} \le C \frac{\log^{(3)} n}{k_n}. 
$$
Since we know that,
\begin{equation*}
\sqrt{\frac{n}{\log^{(2)} n}}\frac{\log^{(3)} n}{k_n}  \ll  \frac{n}{h_3(n)},
\end{equation*}
we must have that,
\begin{equation*}
\sum_{l' \in \mathcal{L}} G(S_l-S_{l'}) \le \left[\frac{1 + 20 \delta}{1-\kappa} + o(1) \right] \frac{n}{h_3(n)}, \quad \forall l \in \mathcal{L}.
\end{equation*}
We can now apply equation \eqref{eq:caplwrbnd} to get the desired lower bound on the capacity.
\end{proof}

We can now finish the proof of (i) in Theorem \ref{m2}.
\begin{proof}[Proof of (i) in Theorem \ref{m2}]

Due to the estimate of Lemma \ref{lem:bndprobFESG}, we can ensure that the event $\mathcal{FE}^{f-1} \cap_{i=0}^{f-1} \mathcal{SG}_i$ would occur infinitely often when considering the collection of times $n^n$ by the same argument as in Theorem \ref{thm:neinfoft}. By Theorem \ref{thm:lowerbndicap}, we would have the desired lower bound on the capacity of the random walk as we set $\delta\to 0$ and $\kappa \to 0$. This completes the proof.

\end{proof}

\subsection{Proof of (ii) in Theorem \ref{m2}}

\begin{proof}[ Proof that $\mathbb{P}(A_n^1 \cap B_n \quad \text{i.o.})=0$]
First, we show $\mathbb{P}(A_n^1 \cap B_n \quad \text{i.o.})=0$.
 To do this, we will start by considering the following. Define $t_n$ to be the collection of times $q^n$ with $q>1$. Our first claim is that if there were an infinite sequence of times $n_1,n_2,\ldots, n_i,\ldots$ such that for all $n_i$, we have 
\begin{equation} \label{eq:notA1}
\begin{aligned}
&R_{n_i} \ge (1-\epsilon/2) h_3(n_i),\\
&(1-\epsilon/2) j_3(n_i)\le D_{n_i} \le j_3(n_i),
\end{aligned}
\end{equation}
then, for $q$ sufficiently small, we would be able to find an infinite sequence of times $t_{k_1}, t_{k_2},\ldots, t_{k_j},\ldots$ such that,
\begin{equation}\label{eq:notA1spec}
\begin{aligned}
&R_{t_{k_j}} \ge (1-\epsilon/2) h_3(t_{k_j}),\\
& D_{t_{k_j}} \le j_3(t_{k_j}).
\end{aligned}
\end{equation}

Notice first that if $t_{j}\le n_i \le t_{j+1}$ for some $n_i$, then  the monotonicity and subadditivity of the capacity implies that,
\begin{equation*}
R_{n_i} -R_{t_{j}} \le R_{t_{j+1}} - R_{t_j} \le R_{[t_j,t_{j+1}]}.
\end{equation*}

From our $\limsup$ bounds on the maximum increase of the capacity from Section \ref{sec:Strassenlimsuplil}, namely, an adaptation of Lemma \ref{lem:Uuprbnd}, we must have that eventually almost surely,  all $R_{[t_{j},t_{j+1}]} \le (1+\kappa)  h_3(t_{j+1} -t_j) \le (1+\kappa)  \sqrt{q-1} h_3(t_j)$. Here, $\kappa>0$ is a small parameter that does not need to stay the same from line to line. Note that as we take $q \to 1$, the relative increase in the capacity becomes smaller and smaller relative to $h_3(t_j)$.

Thus, for $q$ sufficiently small (depending only on our choice of $\epsilon$ from equation \eqref{eq:notA1}), we notice that if we let $t_{k_i}$ be the value of $t_k$ that is closest to but still less than $n_i$, we have,
\begin{equation*}
R_{t_{k_i}} \ge R_{n_i} - (1+\kappa) ( \sqrt{q-1}) h_3(t_{k_i}) \ge (1- \epsilon/2) h_3(n_i) - (1+\kappa)(\sqrt{q-1}) h_3(t_{k_i}) \ge (1-\epsilon) h_3(t_{k_i}).
\end{equation*}
On the line above, we took $q$ sufficiently small, and we used that $h_3(t_{k_i}) \le h_3(n_i)$.

Thus, it suffices to show that the equations in \eqref{eq:notA1spec} do not occur infinitely often. Let $\delta>0$ be some small parameter and $\eta>0$ be another parameter chosen to be small based on the value of $\delta$.  We define the radius $r_n:= \phi(t_n)/(2 \eta(1+\delta)^2 \log^{(2)}t_n)$ and $b_n:= 2 \eta(1+\delta)^6 \log^{(2)}t_n $. As we have done in the proof of Lemma \ref{lem:Uuprbnd} and in \cite[Section 3.2]{DemboOkada}, almost surely, eventually all the ranges $[S_0,S_{t_n}]$ will be contained in the union $\mathcal{C}^{*}_{t_n}(r_n)$ of $b_n$ balls with radius $r_n$ such that the distance between the centers of any two successive balls is less than $r_n$. Namely, we can define the series of stopping times 
$$
\hat{T}_{i+1}:= \inf \{t> \hat{T_i}: \|S_{t} - S_{\hat{T}_i}\| \ge  r_n-1 \}.
$$
Then,
$$
\mathcal{C}^{*}_{t_n}(r_n) = \cup_{i=0}^{b_n-1} \mathcal{B}(S_{\hat{T}_i}, r_n),
$$
where we recall that $\mathcal{B}(x,r)$ represents the ball of radius $r$ centered at $x$.

We must have that $R_{t_n} \le \ca(\mathcal{C}^*_{t_n}(r_n))$. Furthermore, observe that under  the condition that $D_{t_n} \le j_3(t_n)$, we see that all of these balls have centers inside the ball of radius $j_3(t_n)$.  We will show that constraining all of these balls to have centers inside the ball of radius $j_3(t_n)$ will cause the capacity to be less than $h_3(t_n)$ by at least a constant factor. This will give us a proof that the inequalities in equation \eqref{eq:notA1spec} cannot occur infinitely often and, thus, $\mathbb{P}(A_n^1 \cap B_n \quad\text{i.o.}) =0$, as desired.

To bound the capacity of $\mathcal{C}^*_{t_n}(r_n)$, we need to derive a lower bound on $\min_{x \in \mathcal{C}^*_{t_n}(r_n)} \sum_{y \in \mathcal{C}^*_{t_n}(r_n)} G(x- y)$. (The sum over the union $\mathcal{C}^*_{t_n}(r_n)$ should be understood as first a sum over $0 \le i \le b_n$ and then a sum of $y \in \mathcal{B}(S_{\hat{T}_i},r_n)$. Thus, points can be repeated.)   Let $\delta'>0$ be some small parameter. First, let us consider $ \delta' b_n \le i \le (1-\delta') b_n$ and consider $x \in \mathcal{B}(S_{\hat{T}_i},r_n)$. We can show that,
\begin{equation*}
\begin{aligned}
&\sum_{|j-i| \le \delta' b_n} \sum_{y \in \mathcal{B}(S_{\hat{T}_j},r_n)} G(x-y) \ge 2  \frac{3}{2\pi}(1+o(1)) \sum_{m= 3}^{\delta' b_n} |\mathcal{B}_{r_n}| \frac{1}{m r_n}\\
&\ge \frac{3}{\pi} (1+ o(1))|\mathcal{B}_{r_n}| \frac{1}{r_n} \log \frac{\delta' b_n}{3} \ge \frac{3}{\pi}(1+o(1)) |\mathcal{B}_{r_n}| \frac{1}{r_n} \log b_n \ge (1- \epsilon') |\mathcal{B}_{r_n}| b_n h_3(n)^{-1}.
\end{aligned}
\end{equation*}
$\epsilon'$ in the last line can be taken to be arbitrarily close to $0$.
Here, we used the fact that the maximum distance between any point of $\mathcal{B}(S_{\hat{T}_i},r_n)$ and $\mathcal{B}(S_{\hat{T}_j},r_n)$ is less than $|j - i+2| r_n$ and that the Green's function is $G(x) = \frac{3}{2\pi}(1+o(1)) \frac{1}{\|x\|}$. In the last inequality, we used that $\log \frac{\delta' b_n}{3} \ge (1+ o(1)) \log b_n$.

One can observe that by comparison to the Green's function bound obtained in Section \ref{sec:limsup} that the main contribution to the Green's function around a point $x \in \mathcal{B}(S_{\hat{T}_i}, r_n)$ when obtaining the standard $\limsup$ behavior comes from these close intervals and that the contribution from balls that are further away are much smaller.

In the case that we have bounded $D_n$ for the range of the random walk, it is no longer the case that the balls that are further away have a negligible contribution to the Green's function. Indeed, we now see that,
\begin{equation*}
\sum_{|j-i| \ge \delta' b_n} \sum_{y \in \mathcal{B}(S_{\hat{T}_j},r_n)} G(x - y) \ge  c b_n |\mathcal{B}_{r_n}| (j_3(n))^{-1}.
\end{equation*}
Here, we used the fact that the maximum distance any two points in $\mathcal{C}^*_{t_n}(r_n)$, given that $D_n \le j_3(n)$, is $j_3(n)^{-1}$.
Thus, we see that,
\begin{equation*}
\min_{x \in \bigcup_{i = \delta' b_n}^{1-\delta' b_n} \mathcal{B}(S_{\hat{T}_i},r_n)} \sum_{y \in \mathcal{C}^*_{t_n}(r_n)} G(x- y) \ge (1- \epsilon') |\mathcal{B}_{r_n}| b_n h_3(n)^{-1} + c b_n |\mathcal{B}_{r_n}|(j_3(n))^{-1}.
\end{equation*}
We remark additionally that, by similar computations, we can argue that,
\begin{equation*}
\min_{x \in \bigcup_{i=0}^{\delta' b_n} \mathcal{B}(S_{\hat{T}_i},r_n)} \sum_{y \in \bigcup_{j=0}^{\delta' b_n} \mathcal{B}(S_{\hat{T}_j},r_n)} G(x-y) \ge \frac{(1-\epsilon')}{2}|\mathcal{B}_{r_n}| b_n h_3(n)^{-1}.
\end{equation*}
Thus, we see that we can bound the capacity of the `ends' of the chain of balls by using Lemma \ref{lem:uprlwrcap} to derive, 
\begin{equation*}
\ca\left( \bigcup_{i=0}^{\delta' b_n} \mathcal{B}(S_{\hat{T}_i},r_n) \cup \bigcup_{i=(1-\delta')b_n}^{b_n} \mathcal{B}(S_{\hat{T}_i},r_n) \right) \le \frac{2 \delta' b_n |\mathcal{B}_{r_n}|}{\frac{(1-\epsilon')}{2}|\mathcal{B}_{r_n}| b_n h_3(n)^{-1}} \le C \delta' h_3(n).
\end{equation*}

Now, we can use Lemma \ref{lem:uprsumcap} to finally derive,
\begin{equation*}
\ca(\mathcal{C}^*_{t_n}(r_n)) \le  C \delta' h_3(n) + \frac{b_n|\mathcal{B}_{r_n}|}{ (1- \epsilon') |\mathcal{B}_{r_n}| b_n h_3(n)^{-1} + c b_n |\mathcal{B}_{r_n}| (j_3(n))^{-1}} \le (1-\epsilon/2) h_3(n),
\end{equation*}
where the right hand side is attained if one chooses $\delta'$ and $\epsilon'$ sufficiently close to $0$. Thus, we have determined that the collection of events from equation \eqref{eq:notA1spec} cannot happen infinitely often. Thus, we have established our initial assertion that $\mathbb{P}(A_n^1 \cap B_n \quad \text{i.o.}) = 0$. 
\end{proof}

We turn to showing $\mathbb{P}(A_n^2 \cap B_n \quad \text{i.o.})=0$.  This requires a more involved argument. We will first start by presenting the heuristics. In order for the random walk to have the full capacity of the ball, it must be the case that the  random walk must cover the full boundary of the ball. Thus, in order to show that the random walk cannot obtain the full capacity of the ball, we will show that there is some part of the boundary that cannot be fully covered by the random walk.

More rigorously, we will let $E^1(x, \eta j_3(n))$ with $\|x\| = j_3(n)$ denote the caps of the sphere of radius $\mathcal{B}_{j_3(n)}$ for small $\eta>0$. Namely, $E^1(x,\eta j_3(n)) = \mathcal{B}_{j_3(n)} \cap \mathcal{B}(x,\eta j_3(n)) $. The capacity of such a cap is of order $\eta j_3(n)$. By the pigeonhole principle, we will show that there is some cap along the boundary of $\mathcal{B}(x,j_3(n))$ that contains a limited fraction of the random walk. Furthermore, we will also show that $\ca(S_{[1,n]} \cap E^1(x,\eta j_3(n)) \le O(\eta^2 j_3(n))$, which is too small to get the full capacity of the cap. Thus, the random walk inside this cap does not attain the full capacity of the cap.

We will argue that if the random walk does not obtain the full capacity inside of this cap, then the random walk cannot obtain the full capacity of the ball of radius $j_3(n)$. To do this, we develop the following lemmas. In Lemma \ref{cap1}, we derive an explicit formula for how the capacity increases from $\ca(A)$ to $\ca(A \cup B)$ as we add the set $B$ to the set $A$. We then use this formula in Lemma \ref{cap2} to explicitly compute the potential increase of the capacity when we add the complement of the cap $E^1(x,\eta j_3(n))$ inside the ball $\mathcal{B}_{j_3(n)}$, ($V_2 = E^1(x,\eta j_3(n))^c \cap \mathcal{B}_{j_3(n)} $) to the part of the random walk that lies inside the cap $E^1(x,\eta j_3(n))$ ($V_1 = E^1(x,\eta j_3(n)) \cap S[1,n])$ under the condition that the capacity of this latter part, $V_1$, is very small. Lemma \ref{cap2} will show that the capacity of $V_1 \cup V_2$ is strictly smaller by a constant factor than the capacity of $\mathcal{B}_{j_3(n)}$, and thus the capacity of the random walk (which lies inside $V_1 \cup V_2$) must be too small. The following Lemma \ref{cap3} contains a useful probability estimate that is necessary to prove Lemma \ref{cap2}. 

To finally conclude our argument, we will show that there is a cap such that the capacity of the random walk inside this cap will be very small. Then, we can apply Lemma \ref{cap2} and obtain our desired contradiction. In what follows, since the computations do not specifically depend on the radius $j_3(n)$, we will simplify notation by considering the computations on a ball of radius $n$, instead.

\begin{lem}\label{cap1}
For any disjoint finite sets $A$, $B \subset \Z^3$, 
\begin{align*}
    \ca (A \cup B) - \ca (A)
    =\sum_{x\in B} \mathbb{P}^x(T_A=\infty)\mathbb{P}^x(T_{A\cup B}=\infty). 
    \end{align*}
\end{lem}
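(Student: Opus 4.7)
The plan is to derive the identity via a last-exit decomposition of the walk at $B$ combined with the time-reversal symmetry of the SRW. I would first observe that under the convention $T_A:=\inf\{i\ge 1:S_i\in A\}$ used throughout the paper, the factor $\mathbb{P}^x(T_A=\infty)\mathbb{P}^x(T_{A\cup B}=\infty)$ is naturally read as zero for $x\in A\cap B$, so the target identity is equivalent to
\begin{equation*}
\ca(A\cup B)-\ca(A)=\sum_{x\in B\setminus A}\mathbb{P}^x(T_A=\infty)\mathbb{P}^x(T_{A\cup B}=\infty).
\end{equation*}

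First, for each $y\in A$, I would decompose the event $\{T_A=\infty\}$ according to the last visit of the walk to $B$ at a positive time. Since the walk is transient in $\Z^3$, this last visit is a.s.\ finite on the event in question; the post-last-visit portion must then avoid $A\cup B$ forever. Applying the strong Markov property at this last-visit time gives
\begin{equation*}
\mathbb{P}^y(T_A=\infty)=\mathbb{P}^y(T_{A\cup B}=\infty)+\sum_{z\in B\setminus A}\tilde{G}_A(y,z)\,\mathbb{P}^z(T_{A\cup B}=\infty),
\end{equation*}
where $\tilde{G}_A(y,z):=\sum_{k\ge 1}\mathbb{P}^y(S_k=z,\,T_A>k)$. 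Terms with $z\in A\cap B$ drop out automatically, since $S_k=z\in A$ is incompatible with $T_A>k$.

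Next, I would invoke time-reversal. For $y\in A$, $z\in B\setminus A$, and $k\ge 1$, the symmetry $p(u,v)=p(v,u)$ of the SRW transition kernel shows that any path $y=s_0\to s_1\to\cdots\to s_k=z$ with $s_i\notin A$ for $1\le i\le k$ has the same probability as its time-reversal $z=s_0'\to\cdots\to s_k'=y$, which is precisely a path from $z$ whose first positive-time visit to $A$ occurs at time $k$ at the point $y$. Hence $\tilde{G}_A(y,z)=\mathbb{P}^z(T_A<\infty,\,S_{T_A}=y)$. Summing the previous display over $y\in A$ and using $\sum_{y\in A}\mathbb{P}^z(T_A<\infty,\,S_{T_A}=y)=\mathbb{P}^z(T_A<\infty)$ yields
\begin{equation*}
\ca(A)=\sum_{y\in A}\mathbb{P}^y(T_{A\cup B}=\infty)+\sum_{z\in B\setminus A}\mathbb{P}^z(T_A<\infty)\,\mathbb{P}^z(T_{A\cup B}=\infty).
\end{equation*}

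Finally, subtracting this from $\ca(A\cup B)=\sum_{y\in A}\mathbb{P}^y(T_{A\cup B}=\infty)+\sum_{z\in B\setminus A}\mathbb{P}^z(T_{A\cup B}=\infty)$ and using $1-\mathbb{P}^z(T_A<\infty)=\mathbb{P}^z(T_A=\infty)$ will give the claim. The main obstacle will be the reversibility step: one must carefully align index ranges between the forward walk and its reversal, observing that the forward constraint $T_A>k$ (avoidance of $A$ at times $1,\ldots,k$) translates after reversal into the constraint that the walk from $z$ first hits $A$ at time $k$ at the point $y$ (the shift by one coming from the fact that $y\in A$ but $z\notin A$). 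Once this correspondence is pinned down, the remaining manipulations are routine.
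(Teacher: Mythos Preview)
Your argument is correct and takes a genuinely different route from the paper. One small caveat: your opening claim that $\mathbb{P}^x(T_A=\infty)\mathbb{P}^x(T_{A\cup B}=\infty)$ is ``naturally read as zero'' for $x\in A\cap B$ is not right under the convention $T_A=\inf\{i\ge 1:S_i\in A\}$ --- these are nonzero escape probabilities. The lemma as stated tacitly requires $A$ and $B$ disjoint (the paper's proof says so explicitly in its first line), and your reduction to summing over $B\setminus A$ is the correct move; it is just not for the reason you give.

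As for the comparison: the paper's proof is purely algebraic. With $z_i=\mathbb{P}^{x_i}(T_A=\infty)$ and $y_i=\mathbb{P}^{x_i}(T_{A\cup B}=\infty)$, it exploits the equilibrium identities $\sum_{l\in A\cup B}G(x_i-x_l)y_l=1$ (for $x_i\in A\cup B$) and $\sum_{l\in A}G(x_i-x_l)z_l=1$ (for $x_i\in A$), swaps the order of a double sum to obtain $\sum_{i\in A}y_i=\sum_{i\in A}z_i\bigl(1-\sum_{l\in B}G(x_i-x_l)y_l\bigr)$, and then reads off the result after one more swap and the hitting identity $\sum_{l\in A}z_l\,G(x_l-x)=\mathbb{P}^x(T_A<\infty)$. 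Your last-exit decomposition plus time-reversal is more probabilistic and arguably more transparent: the intermediate kernel $\tilde G_A(y,z)$ has a direct path interpretation, and the reversal step makes the appearance of $\mathbb{P}^z(T_A<\infty)$ immediate rather than emerging from a Green's-function manipulation. The paper's approach fits naturally into the equilibrium-measure framework used elsewhere in the article; yours avoids that machinery entirely and would port unchanged to any transient reversible Markov chain.
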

\begin{proof}
    Let $A=\{x_1, \ldots , x_j\}$ and $B=\{x_{j+1}, \ldots ,x_{j+k}\}$. We assume that $A$ and $B$ are disjoint. 
    Let $z_i:=\mathbb{P}^{x_i}(T_A=\infty)$ and 
    $y_i:=\mathbb{P}^{x_i}(T_{A\cup B}=\infty)$. 
    Note that for $1\le i \le j$, 
    \begin{align*}
        \sum_{l=1}^j G(x_i-x_l) y_l
        = 1- \sum_{l=j+1}^{j+k} G(x_i-x_l)y_l.
    \end{align*}
We have that,
    \begin{equation*}
    \begin{aligned}
    &\sum_{i=1}^j z_i(1 - \sum_{l=j+1}^{j+k}G(x_i-x_l)y_l) = \sum_{i=1}^j z_i \sum_{l=1}^j G(x_i-x_l) y_l
    \\&= \sum_{l=1}^j y_l \sum_{i=1}^j z_iG(x_i-x_l) = \sum_{l=1}^j y_l.
    \end{aligned}
    \end{equation*}
    Hence, we have $\sum_{i=1}^j y_i 
    = \sum_{i=1}^j z_i (1- \sum_{l=j+1}^{j+k} G(x_1-x_l)y_l)$. 
    Therefore, we have
    \begin{align*}
    \ca (A \cup B) - \ca (A)
    =&\sum_{i=1}^{j+k} y_i - \sum_{i=1}^j z_i \\
    = &\sum_{i=1}^j z_i (1- \sum_{l=j+1}^{j+k} G(x_{i}-x_l)y_l)
    + \sum_{i=j+1}^{j+k} y_i - \sum_{i=1}^j z_i \\
    =& \sum_{i=j+1}^{j+k} y_i (1-\sum_{l=1}^j z_l G(x_l-x_i))\\
    =&\sum_{x\in B} \mathbb{P}^x(T_A=\infty)\mathbb{P}^x(T_{A\cup B}=\infty). 
    \end{align*}
\end{proof}

Pick a point $x \in \partial \mathcal{B}_n$, where we recall that $\mathcal{B}_n$ is a ball with a radius $n$. We consider the small ball with a radius of $\eta n$ and center $x$. 
Let $E_n^1(x,\eta n)=E_n^1$ be the intersection between that small ball and $\mathcal{B}_n$. 

\begin{lem}\label{cap2}
Assume that $V_1$ is contained in the subset $E_n^1$ of the ball $\mathcal{B}_n$ and $V_2 = \partial \mathcal{B}_n \cap E_{n}^1$. 
Let $E_n^2:= \mathcal{B}_n \cap (E_n^1)^c$.  
If 
\begin{align}\label{assum:cap}
\ca (V_1) \le (c\eta^2+\xi_n) \ca (\mathcal{B}_n)  
\end{align}
with $\xi_n \to 0$, there exists $f(\eta)>0$ (which will have some dependence on $\eta$) such that
\begin{align*}
    \ca (E_n^2  \cup V_1) \le (1-f(\eta) ) \ca (\mathcal{B}_n).  
    \end{align*}
\end{lem}

\begin{rem}\label{dis*}
There is a point, which occurs in the proof of Lemma \ref{cap3}, where it would be necessary for $\text{dist}(V_1,\partial \mathcal{B}_n)$ to be greater than $\theta n$ for some $\theta >0$. Namely, this is equation \eqref{eq:eisfarfromd}. We remark that this inequality does not depend on the value of $\theta$, only that it is greater than $0$.

From this logic, we then claim that it is sufficient to consider the case that all the points of $S[0,n]$ lie inside the ball $\mathcal{B}_{n(1-g(\eta))}$, where $g(\eta)$ is a constant such that $g(\eta) >0$ and $\ca(\mathcal{B}_n) - \ca(\mathcal{B}_{n(1-g(\eta))}) \le \frac{f(\eta)}{2} \ca(\mathcal{B}_n)$.  Indeed, if we assume that the random walk $S_{[0,n]}$ lies inside the ball $\mathcal{B}_{n(1- g(\eta))}$, then all the points of $S_{[0,n]}$ would be of distance at least $g(\eta)n$ from the boundary of $\mathcal{B}_n$. If furthermore, there was a cap $E^1_n$ in the boundary of $\mathcal{B}_n$ that satisfied the conditions of Lemma \ref{cap2} with $V_1 = E^1_n \cap S[1,n]$, then we would have that $R_n \le \ca(E_n^2 \cup V_1) \le (1- f(\eta)) \ca(\mathcal{B}_n) \le \frac{1 - f(\eta)}{1- \frac{f(\eta)}{2}} \ca(\mathcal{B}_{n(1-g(\eta)}).$ Then, there would still be a gap between $R_n$ and the capacity of the ball $\mathcal{B}_{n(1-g(\eta))}$ that it is contained inside. This will complete our desired assertion.

\end{rem}

\begin{proof}
In the following proof, $C$ and $c$ are constants that need not stay the same from line to line.

By Lemma \ref{cap1}, 
\begin{align*}
    &\ca (V_1 \cup V_2 )-\ca (V_1)\\
    =& \sum_{x \in V_2} \mathbb{P}^x(T_{V_1}=\infty) \mathbb{P}^x (T_{V_1 \cup V_2}=\infty)\\
    =& \sum_{x \in V_2} \mathbb{P}^x(T_{V_1}=\infty) \mathbb{P}^x (T_{V_1 \cup V_2}=\infty, \text{ the first step of the random walk exits the ball }\mathcal{B}_n)\\
    +&\sum_{x \in V_2} \mathbb{P}^x(T_{V_1}=\infty) \mathbb{P}^x (T_{V_1 \cup V_2}=\infty, \text{ the first step of the random walk enters inside of the ball }\mathcal{B}_n).
    \end{align*}
We consider a cylinder such as
\begin{align*}
    \{(x,y,z) \in \RR^d (\text{ or }\Z^d): 
     x^2+y^2 \le c, -a<z<b \}
\end{align*}
for $a,b>0$. 
Note that by the simple computation of the estimate of the hitting time of the Brownian motion, 
\begin{align*}
    \mathbb{P}(\text{A Brownian motion does not exit the cylinder before $T_{CL}$ }) \ge C, 
\end{align*}
where $CL:=\{(x,y,z) \in \RR^d: 
     x^2+y^2 \le c, z=b \}$. 
Hence, by the Skorokhod Embedding (see Lemma 3.1 in \cite{LA91}), we can easily obtain
\begin{align}\label{cylinder}
    \mathbb{P}(\text{ A random walk does not exit the cylinder before hitting at the area } CL \cap \{z: z =b \}) \ge C.
\end{align}

    Let $\mathcal{S}'$ be the surface of the ball whose center is the center of $V_2$, namely, the point $x$, and whose radius is twice of the diameter of $V_2$ (so the radius of $\mathcal{S}'$ would be $ 4 \eta n$).  
    Also, let $\mathcal{S}$ be the surface of the ball whose center is $x$ and whose radius is $ 6\eta n$. 
    With the help of \eqref{cylinder}, 
\begin{align*}
        \mathbb{P}^y(T_{V_2}<T_{\mathcal{S}})
        \le C \mathbb{P}^y(T_{V_2}<T_{\mathcal{S}}, \text{ the random walk hits } V_2 \text{ outside of }\mathcal{B}_n)
    \end{align*}
holds uniformly for $y$ in ${\mathcal{S}'}$. Thus, we have that, 
    \begin{align*}
        \mathbb{P}^z(T_{V_2}<T_{\mathcal{S}})
        \le &\sum_{y\in S'}\mathbb{P}^z(T_{V_2}<T_{\mathcal{S}}, S_{T_{\mathcal{S}'}}=y)\mathbb{P}^y(T_{V_2}<T_{\mathcal{S}})\\
        \le &C \sum_{y\in S'}\mathbb{P}^z(T_{V_2}<T_{\mathcal{S}}, S_{T_{\mathcal{S}'}}=y)\mathbb{P}^y(T_{V_2}<T_{\mathcal{S}}, \text{ the random walk hits } V_2 \text{ outside of }\mathcal{B}_n)\\
        \le & C \mathbb{P}^z(T_{V_2}<T_{\mathcal{S}}, \text{ the random walk hits } V_2 \text{ outside of }\mathcal{B}_n)
    \end{align*}
   holds uniformly for all $z$ in $\mathcal{S}$. 
Indeed, a random walk that hits $V_2$ before hitting ${\mathcal{S}}$ starting from a point in $\mathcal{S}$, must hit a point in $\mathcal{S}'$ before hitting $V_2$. 
    
Hence, by reversing the random walk, we get,
    \begin{equation*} 
    \begin{aligned}
        &\sum_{x\in V_2}  \mathbb{P}^x(T_{V_2}=\infty, S_{T_{\mathcal{S}}}=y) \\
        \le & C \sum_{x\in V_2} \mathbb{P}^x(T_{V_2}=\infty, \text{ the first step of the random walk exits the ball }\mathcal{B}_n, S_{T_{\mathcal{S}}}=y). 
    \end{aligned}
    \end{equation*}
    In addition, if we sum over $y$, we get as a consequence that,
    \begin{equation*}
    \begin{aligned}
        &\sum_{x \in V_2} \mathbb{P}^x(T_{V_2} =\infty)\\
        &\le C \sum_{x \in V_2} \mathbb{P}^x(T_{V_2} = \infty, \text{the first step of the random walk exits the ball } \mathcal{B}_n).
    \end{aligned}
    \end{equation*}

    Therefore, we have 
    \begin{align*}
    &\sum_{x \in V_2} \mathbb{P}^x(T_{V_1}=\infty) \mathbb{P}^x (T_{V_1 \cup V_2}=\infty, \text{ the first step of the random walk enters inside of the ball }\mathcal{B}_n)  \\
    \le &\sum_{x \in V_2}  \mathbb{P}^x (T_{V_2}=\infty, \text{ the first step of the random walk enters inside of the ball }\mathcal{B}_n)\\
    \le & (1-C^{-1}) \ca (V_2). 
    \end{align*}
To derive the last line, we used the following decomposition:
\begin{equation*}
\sum_{x \in V_2} \mathbb{P}^x(T_{V_2} = \infty) = \sum_{x \in V_2} \mathbb{P}^x(T_{V_2} = \infty, \text{first step exits}) + \sum_{ x \in V_2} \mathbb{P}^x(T_{V_2 } = \infty, \text{first step enters}),
\end{equation*}
and hence noting $\ca (V_1 \cup V_2 )-\ca (V_1) \ge (1-
(c  \eta+\xi_n)) \ca(V_1 \cup V_2) \ge (1-c \eta+\xi_n) \ca(V_2)$, 
\begin{equation} \label{eq:addcap1}
     \begin{aligned}
    \sum_{x \in V_2} \mathbb{P}^x (T_{V_1}=\infty) \mathbb{P}^x (T_{V_1 \cup V_2}=\infty, \text{ the first step of the random walk exits the ball }\mathcal{B}_n) 
   \\ \ge  [(1- (c\eta+\xi_n))-(1-C^{-1})] \ca(V_2) \ge c \ca (V_2). 
    \end{aligned}
\end{equation}
We remark that at this point, the constant $c$ that appears above is still a universal constant that does not depend on the value of $\eta$ as long as $\eta$ is small enough.

  In addition, there is a universal constant $K$  and a subset  $V'_2 \subset  V_2$ such that dist$(V'_2,  \partial V_2)\ge \eta n/ K$ and 
 \begin{equation} \label{eq:defV2'}
  \begin{aligned}
    \sum_{x \in V'_2} \mathbb{P}^x(T_{V_1}=\infty) 
    \mathbb{P}^x (T_{V_1 \cup V_2}=\infty, \text{ the first step of the random walk exits the ball }\mathcal{B}_n) 
    \ge  c/2 \ca (V_2). 
    \end{aligned}
\end{equation}
To see this, we let $V_2'$ consist of all of the points of distance at least $\frac{n \eta}{K}$ from the boundary of $V_2$, and see that 
$$
\sum_{x \in V_2 \setminus V_2'} \mathbb{P}^x(T_{V_1} = \infty) \mathbb{P}^x(T_{V_1 \cup V_2} = \infty) \le \sum_{x \in V_2 \setminus V_2'} \mathbb{P}^x(T_{V_2 \setminus V_2'} = \infty) = \ca(V_2 \setminus V_2').
$$
This latter capacity will scale in the form $\frac{n \eta}{h(K)}$ for some function $h(K)$ that will go to $0$ as $K$ goes to $0$ (to the leading order approximation, $V_2 \setminus V_2'$ is like an annulus with outer radius $\eta n$ and inner radius $\eta(1- K^{-1})n$. It suffices to choose $K$ large enough so that $h(K) \le \frac{c}{2}$, where $c$ was the constant from the last inequality of equation \eqref{eq:addcap1}. Thus, this $K$ is a universal constant.

We remark that the point of considering only the points in $V_2'$ since for each point $x \in V_2'$, we can find a ball of radius $O(n \eta)$ such that there will be no intersection between this ball around the point $x$ and the set $E_n^2$. 

    Now, we turn to considering 
    \begin{align*}
    \ca (E_n^2 \cup V_1 \cup V_2 )-\ca (E_n^2 \cup V_1)
    \ge & \sum_{x \in V'_2} \mathbb{P}^x(T_{E_n^2 \cup V_1}=\infty) \mathbb{P}^x (T_{E_n^2 \cup V_1 \cup V_2}=\infty).
    \end{align*}
Combining the following equation for $x\in V'_2$,

    \begin{align*}
    \mathbb{P}^x (T_{V_1 \cup V_2}=\infty, \text{ the first step of the random walk exits the ball }\mathcal{B}_n) 
    \le  h_1(\eta) \mathbb{P}^x (T_{E_n^2 \cup V_1 \cup V_2}=\infty).
    \end{align*}
We remark here that the constant on the right-hand side will have some dependence on $\eta$. We will give a few remarks on how the equation above is derived.

Given that we are considering a random walk  with $T_{V_1 \cup V_2} = \infty$ whose first step exits the ball $\mathcal{B}_n$, we see that the first point that the walk hits the ball $\mathcal{B}(x,\frac{\eta n}{K})$ must be on its hemisphere that lies outside the ball $\mathcal{B}_n$. Let $L$ denote the plane through $x$ that is tangent to the ball $\mathcal{B}_n$; we also let $H_1$ denote the hemisphere of $\mathcal{B}(x,\frac{\eta n}{K})$ that is outside of the ball $\mathcal{B}_n$ and $H_2$ be the hemisphere that intersects $\mathcal{B}_n$.

Notice that the distribution of this first point that hits the ball $\mathcal{B}(x, \frac{\eta n}{K})$ would be the same as the one generated if we considered a random walk with $T_{V_2} = \infty$ whose first step exits the ball $\mathcal{B}_n$.
Thus, at least a $p_1(\eta)$ proportion of the walks such that $T_{V_1 \cup V_2} = \infty$ and whose first step exists the ball $\mathcal{B}_n$ will hit the hemisphere $H_1$ at a point $y$ of distance at least $\frac{n\eta}{4K}$ away from $L$. Of these, walks that hit $y$, there is at least a $p_2(\eta)$ proportion that satisfy $T_{E_n^2 \cup V_1 \cup V_2}= \infty$, and this bound is uniform in $y$.  Thus,
\begin{equation*}
\mathbb{P}^x (T_{V_1 \cup V_2}=\infty, \text{ the first step of the random walk exits the ball }\mathcal{B}_n)  \le \frac{1}{p_1(\eta) p_2(\eta)} \mathbb{P}^x(T_{E_n^2 \cup V_1 \cup V_2}=\infty).
\end{equation*}
Letting $h_1 (\eta):= \frac{1}{p_1(\eta) p_2(\eta)}$, we complete our result.


From Lemma \ref{cap3}, we see that there is some other function $h_2(\eta)$ such that we can obtain $\forall x \in V_2'$, that,
    $
    \mathbb{P}^x(T_{V_1 \cup E_n^2} = \infty) \ge h_2(\eta) \mathbb{P}^x(T_{V_1}= \infty).
    $
We can combine these estimates together to see that,
\begin{equation*}
\begin{aligned}
&\sum_{x \in V'_2} \mathbb{P}^x(T_{E_n^2 \cup V_1}=\infty) \mathbb{P}^x (T_{E_n^2 \cup V_1 \cup V_2}=\infty)\\& \ge h_1(\eta) h_2(\eta) \sum_{x \in V'_2} \mathbb{P}^x(T_{V_1}=\infty) 
    \mathbb{P}^x (T_{V_1 \cup V_2}=\infty, \text{ the first step of the random walk exits the ball }\mathcal{B}_n) \\
    &\ge  \frac{c}{2}h_1(\eta) h_2(\eta) \ca(V_2).
\end{aligned}
\end{equation*}
The last equation was derived from \eqref{eq:addcap1}.
Thus, we have
\begin{equation*}
\ca(E_n^2 \cup V_1 \cup V_2) - \ca(E_n^2 \cup V_1) \ge \sum_{x \in V'_2} \mathbb{P}^x(T_{E_n^2 \cup V_1}=\infty) \mathbb{P}^x (T_{E_n^2 \cup V_1 \cup V_2}=\infty) \ge f(\eta) \ca(\mathcal{B}_n).
\end{equation*}

Here, $f(\eta):= \frac{c}{2} h_1(\eta) h_2(\eta) \frac{\ca(V_2)}{\ca(\mathcal{B}_n)}$. We observe that $\frac{\ca(V_2)}{\ca(\mathcal{B}_n)}$ scales linearly in $\eta$.  Now, since $\ca(E_n^2 \cup V_1 \cup V_2)= \ca(\mathcal{B}_n)$, we have that $\ca(E_n^2 \cup V_1)  \le (1- f(\eta)) \ca(\mathcal{B}_n)$, as desired. 
\end{proof}

\begin{lem}\label{cap3}
Recall the set $V_2'$ from equation \eqref{eq:defV2'}. Let $x \in V_2'$ and $L$ be the tangent plane to $\mathcal{B}_n$ through $x$. Furthermore, assume that there exists some $\theta >0$ such  that $\text{dist}(L,V_1) \ge \theta n$. Then, we have 
\begin{align*}
    \mathbb{P}^x (T_{V_1\cup E_n^2 }=\infty)
    \ge  h(\eta) \mathbb{P}^x(T_{V_1}=\infty ). 
\end{align*}
Here, $h(\eta)$ is a function that could depend on $\eta$ and the universal constant $K$ from the definition of \eqref{eq:defV2'}, but not on $\theta$. 
\end{lem}
\begin{proof}

Consider the ball $\mathcal{B}( x, a_n)$ with $a_n=\eta n/2K$ and $K$ is the universal constant from the definition of \eqref{eq:defV2'} and a plane $L$ through the point $x$ that is tangent to the sphere $\mathcal{B}_n$. This plane divides $\mathcal{B}(x,a_n)$ into two halves $H_1$ and $H_2$. Let $H_1$ be the half of the sphere $\mathcal{B}(x,a_n)$ that intersects the ball $\mathcal{B}_n$. 
Let $D_1$ denote $D_1 :=\partial \mathcal{B}_{a_n} \cap H_1$ and $D_2:= \partial \mathcal{B}_{a_n} \cap H_2$ and $D= D_1 \cup D_2$.  Now, let $E$ be any set that $E \subset H_1$ and that there exists $\theta>0$ such that $\forall e \in E$, $\text{dist}(e, L) \ge \theta n$ (as mentioned in Remark \ref{dis*}). 

We will demonstrate that
\begin{equation} \label{eq:hitD2moreD1}
\mathbb{P}^{x }(T_{\partial \mathcal{B}_{a_n}} = T_{D_2}|T_{E} > T_{\partial {\mathcal{B}_{a_n}}})  
\ge \mathbb{P}^{x}(T_{\partial \mathcal{B}_{a_n}} = T_{D_1}| T_{E} > T_{\partial{\mathcal{B}_{a_n}}}).
\end{equation} 

Note that it suffices to show that,
\begin{equation*}
\mathbb{P}^{ x}(\{T_{\partial \mathcal{B}_{a_n}} = T_{D_2} \} \cap \{ T_{E} > T_{\partial \mathcal{B}_{a_n}} \}) \ge \mathbb{P}^{x}(\{T_{\partial \mathcal{B}_{a_n}}  = T_{D_1} \} \cap\{ T_E > T_{\partial B_{a_n}} \}).
\end{equation*}

We can reflect the walk around the center $x$ to argue that,
\begin{equation*}
\mathbb{P}^{x}(T_{\partial \mathcal{B}_{a_n}} = T_{D_2}) 
= \mathbb{P}^{x}(T_{\partial \mathcal{B}_{a_n}} = T_{D_1}).
\end{equation*}
We only need to argue that,
\begin{equation} \label{eq:uprbndpotential}
\mathbb{P}^{x}(\{T_{\partial \mathcal{B}_{a_n}} = T_{D_2}\} \cap \{ T_{E} < T_{\partial \mathcal{B}_{a_n}} \}) \le \mathbb{P}^{x}(\{ T_{\partial \mathcal{B}_{a_n}} = T_{D_1} \} \cap \{ T_{E}  < T_{\partial \mathcal{B}_{a_n}} \}).
\end{equation}

Since $T_{E}< T_{\partial \mathcal{B}_{a_n}}$, we can decompose both of these walks based on the first point where they enter the set $E$. Thus, we have that,
\begin{equation*}
\mathbb{P}^{ x}(\{T_{\partial \mathcal{B}_{a_n}} = T_{D_1}\} \cap \{ T_{E} < T_{\partial \mathcal{B}_{a_n} }\}) =\sum_{e \in E} \mathbb{P}^{x}\left( \{S_{T_E } =e\} \cap \bigcap_{t \le T_E}\{ S_t \not \in \partial \mathcal{B}_{a_n}\}\right) \mathbb{P}^e(T_{\partial \mathcal{B}_{a_n}} = T_{D_1}),
\end{equation*}
and,
\begin{equation*}
\mathbb{P}^{x}(\{T_{\partial \mathcal{B}_{a_n}} = T_{D_2}\} \cap \{ T_{E} < T_{\partial \mathcal{B}_{a_n}} \}) =\sum_{e \in E} \mathbb{P}^{x}\left( \{ S_{T_E } =e \}  \cap \bigcap_{t\le T_{E}} \{S_t \not \in \partial \mathcal{B}_{a_n}\}  \right) \mathbb{P}^e(T_{\partial \mathcal{B}_{a_n}} = T_{D_2}).
\end{equation*}
Since $e \in  H_1$ and these points $e \in H_1$ are of distance at least $\theta n$ from the dividing plane $L$, it is clear that 
\begin{equation} \label{eq:eisfarfromd}
\mathbb{P}^e(T_{\partial \mathcal{B}_{a_n}}  = T_{D_1})  \ge \mathbb{P}^e(T_{\partial \mathcal{B}_{a_n}} = T_{D_2}) .
\end{equation}
This proves the desired inequality \eqref{eq:uprbndpotential}. 

Finally, 
\begin{align*}
    \mathbb{P}^x(T_{V_1\cup E_n^2 }=\infty)
    \ge p(\eta) \mathbb{P}^x (T_D =T_{D_2}, T_{V_1} >T_D)
    \ge 
     \frac{p(\eta)}{2} \mathbb{P}^x (T_{V_1}>T_D)
    \ge \frac{p(\eta)}{2} \mathbb{P}^x (T_{V_1}=\infty ).
\end{align*}
To get the first inequality, we used the fact that every point on the boundary of $H_2$ must be of distance $d_K(\eta)$ away from the boundary of $\mathcal{B}_n$, where $d_K(\eta)$ is some function of $\eta$ that depends on the universal constant $K$. If we now let $y$ be the the point in which the random walk hits $D_2$, there must be some probability $p(\eta)$ (that is a function of this $d_{K}(\eta)$ and ultimately a function of $\eta$) such that  a walk that starts from $y$ will never again hit the ball $\mathcal{B}_n$. The second inequality comes from equation \eqref{eq:hitD2moreD1}. The last is derived from the fact that in three-dimensions $T_D$ is finite. This completes the desired assertion. Furthermore, we remark that $p(\eta)$ does not depend on the choice of $\theta$ from earlier. 
\end{proof}

\begin{proof}[Proof of $\mathbb{P}(A_n^2 \cap B_n \quad \text{i.o.})=0$]
As in the proof of $\mathbb{P}(A_n^1 \cap B_n \quad \text{i.o.})=0$, it suffices to only show that $\mathbb{P}(A_{t_n}^2 \cap B_{t_n} \quad\text{i.o.} ) =0$  along an appropriate sequence $t_n = q^n$ with $q>1$. Accordingly, we will also assume that the random walk $S[0,t_n]$ can be covered by the union $\mathcal{C}^*_{t_n}$, with the radii $r_n$, number of balls $b_n$, and stopping times $\hat{T}_i$ defined in the same way. We will show \eqref{assum:cap} to use Lemma \ref{cap2}. 

Now, consider some cap $E^1(x,\eta j_3(t_n))$. This cap will have surface area $O(\eta^2 j_3(t_n)^2)$ while the ball with radius $j_3(t_n)$ will have surface area $O(j_3(t_n)^2)$. We can cover the surface area of the sphere with $O(\eta^{-2})$ disjoint caps. Let $x_1,\ldots, x_{K}$ denote the center of these disjoint caps.

We will say that  a sphere $\mathcal{B}(S_{\hat{T}_i}, r_n)$ is contained in the cap $E^1(x_k,\eta j_3(t_n))$ if there exists some index $j$ satisfying $|j-i| \le  \frac{\log^{(2)} t_n}{(\log^{(3)} t_n)^2}$ and $\mathcal{B}(S_{\hat{T}_j}, r_n)$ intersects $E^1(x_k, \eta j_3(t_n))$. First, observe that the distance between the sphere $\mathcal{B}(S_{\hat{T}_j}, r_n)$ and $\mathcal{B}(S_{\hat{T}_i},r_n)$ will be no more than $ r_n(2  \frac{\log^{(2)}n}{(\log^{(3)} n)^2} +1) \ll  j_3(n)$. Thus, with this definition, any sphere $\mathcal{B}(S_{\hat{T}_i},r_n)$ belongs to at at most $4$ different caps.

By the pigeonhole principle, we can argue that there must be a cap that contains no more than $C \eta^2 $ balls $\mathcal{B}(S_{\hat{T}_j}, r_n)$  with our definition of containment. 

Now, consider a cap $E^1(x_k,\eta j_3(t_n))$ that contains no more than $C \eta^2$ balls. Let $\mathcal{X}_{x_k}$ denote the union of balls that are contained inside $E^1(x_k,\eta j_3(t_n))$ If the ball $\mathcal{B}(S_{\hat{T}_i},r_n)$ is contained inside $E^1(x_k,\eta j_3(t_n))$, then there must be some $j$ with $|j-i| \ge  \frac{\log^{(2)}t_n}{(\log ^{(3)} t_n)^2}$ such that all the balls $\mathcal{B}(S_{\hat{T}_k},r_n)$ with $k \in [i,j]$ are all contained inside $E^1(x_k,\eta j_3(t_n))$.  Thus, a Green's function lower bound for any $z$ belonging to a ball contained in $E^1(x_k,\eta j_3(t_n))$ would be,
\begin{equation*}
\sum_{i \in \mathcal{X}_{x_k}} \sum_{y \in\mathcal{B}(S_{\hat{T}_i},r_n)} G(z-y) \ge c\frac{|\mathcal{B}_{r_n}|}{r_n} \sum_{i=1}^{\epsilon\frac{\log^{(2)} t_n}{(\log^{(3)} t_n)^2} } \frac{1}{i} \le c \frac{|\mathcal{B}_{r_n}|}{r_n} \log^{(3)}t_n 
\end{equation*}
and hence
\begin{align*}
    \ca (S[1,t_n]\cap E^1(x_k,\eta j_3(t_n))) 
    \le& \ca(\cup_{i \in \mathcal{X}_{x_k}} \mathcal{B}(S_{\hat{T}_i},r_n))  \\
    \le & \frac{|\mathcal{B}_{r_n}||\mathcal{X}_{x_k}|}{c\frac{|\mathcal{B}_{r_n|} \log^{(3)}t_n}{r_n}} \le O(1) \eta^2 \frac{b_n r_n}{\log^{(3)}t_n} = O(1) \eta^2 h_3(t_n).
\end{align*}
We let $V_1$ be the set $S[1,t_n]\cap E^1(x_k,\eta j_3(t_n))$.  We see that this satisfies the conditions of Lemma \ref{cap2}, that is, \eqref{assum:cap} and thus, we see that $R_{t_n} \le \ca(E_{t_n}^2 \cup V_1) \le (1- f(\eta)) \ca(\mathcal{B}_{t_n})$.    
\end{proof}

\subsection{Proof of (iii) in Theorem \ref{m2}} 
\subsubsection{Preliminaries and Heuristics}  
Concerning $\mathbb{P}( A_n^1 \cap B_n \quad \text{i.o.})=0$: 
since $\ca (\mathcal{B}_{j_3(n)}) \ll h_3(n)$, it is trivial to show it.  
Then, we show $\mathbb{P}( A_n^2 \cap B_n \quad \text{i.o.})=1$. 
 As in the proof of (i), we can maximize the capacity by wrapping it uniformly around the sphere;  and, as before, we describe `wrapping uniformly around the sphere' as traveling around the $x-y$ cross sections of the sphere that are spaced at equidistant heights along the sphere. We will also derive lower bounds on the capacity by establishing upper bounds on the Green's functions sums $\max_{l}\sum_{l'} G(S_l-S_{l'})$. 
 However, we first remark that the main contribution to the Green's function sums would be from different locations. In (i), the main contribution to the sum of the Green's function was from the local contribution along each edge. 
 
 However, in the case of (iii), the main contribution must come from points that are of the distance scale of the same size as the sphere. To see this, one must notice that the global contribution is roughly the size of $\frac{n}{j_3(n)}$, while the local contribution is $\frac{n}{h_3(n)}$. Since $j_3(n) \ll h_3(n)$, the global contribution must be larger. One advantage that this gives us is that we can obtain the maximum value of the capacity using only a subset of the vertices of the random walk.  If we only consider a subset of $|T|$ vertices, it is only necessary for $\frac{|T|}{j_3(n)} \gg \frac{n}{h_3(n)}$. 
 Namely, $|T| = n k_n t(n)$. Here, $t(n)$ is a function such that $\lim_{n \to \infty} t(n) = \infty$, but should be thought of as doing so at a very slow rate. 

 Now, a cross-section will have a perimeter of order $j_3(n)$ and, if the random walk traverses the perimeter at a rate $\sqrt{\frac{\log^{(2)} n}{n}}$, then each cross-section will contain $ \text{O} \bigg(\sqrt{\frac{n}{\log^{(2)} n}}j_3(n)\bigg)  =  \text{O} \bigg(\frac{k_n n}{\log^{(3)} n}\bigg)$ points. 
 Since we need only $|T| = n k_n t(n)$ points, we need $ O((\log^{(3)} n) t(n))$ cross-sectional slices of the sphere.  Thus, the distance between the slices would be $ \frac{j_3(n)}{(\log^{(3)} n) t(n)}= \frac{k_n \sqrt{n \log^{(2)} n}}{(\log^{(3)} n)^2 t(n)}$. With this intuition in hand, we can now start introducing our appropriate discretization of the sphere.

We consider the following heights,
\begin{equation} \label{eq:heightsX}
  h_{\pm l} = \pm l  \frac{j_3(n)}{(\log^{(3)} n) t(n)}, \quad l\in \left \{0,\ldots, [1- \epsilon]   (\log^{(3)} n)t(n)\right\}.
\end{equation}

At each of these heights $h_l$, we will consider an $m$-gon at said height $h_l$. Namely, we consider the points 
\begin{equation*}
p_{a,l}:=\left(\sqrt{j_3(n)^2 - h_l^2} \cos(2\pi a m^{-1}), \sqrt{j_3(n)^2 - h_l^2} \sin(2 \pi a m^{-1})   , h_l\right).
\end{equation*}
We let $\mathcal{P}_k$ denote the cycle,
\begin{equation*}
\mathcal{P}^m_k:p_{0,k} \to p_{1,k} \to p_{2,k} \to \ldots \to p_{m-1,k} \to p_{m,k}.
\end{equation*}
Then, we consider a walk that makes the following sequence of moves,
\begin{equation}\label{eq:pathconst}
\mathcal{P}:=\mathcal{P}^m_{-(\log^{(3)} n)t(n)} \to \mathcal{P}^m_{-(\log^{(3)} n) t(n)+1}\to \ldots \mathcal{P}^m_{(\log^{(3)} n) t(n)}.
\end{equation}
We can alternatively denote the vertices as,
\begin{equation} \label{eq:path3}
\mathcal{P}:= v_0 \to v_1 \to v_2 \to \ldots \to v_f.
\end{equation}
The vertices $v_{j(m+1)},v_{j(m+1)+1},\ldots, v_{j(m+1)+m}$ correspond to vertices on the same cross-sectional level. 

Furthermore, it will traverse the edges at the rate $\sqrt{\frac{2 \log^{(2)} n}{3n}}$. Namely, if it is at vertex $v_{i}$ at time $t_i$ and vertex $v_{i+1}$ at time $t_{i+1}$, then we would have $\|v_{i+1} - v_i \| \approx \sqrt{\frac{\log^{(2)} n}{n}}(t_{i+1} -t_i) $. We can now give the necessary rigorous notation.

\subsubsection{Rigorous Notation}

Given the path, $\mathcal{P}$, from equation \eqref{eq:pathconst} and labeling from equation \eqref{eq:path3}, we define the distances,
\begin{equation*}
d_i:= \sum_{j=0}^{i-1} \|v_{j+1} - v_j \|, 
\end{equation*}
as well as the times,
\begin{equation*}
t_i := \frac{1}{1-\kappa} \sum_{j=0}^{i-1} \|v_{j+1} -v_j\| \sqrt{\frac{n}{\log^{(2)} n}}.
\end{equation*}
Now, $t_f= O(n k_n t(n)) \ll n$.

We now need to define appropriate events about controlling the path of the random walk. We need to introduce the following notational convention. If we write the superscript of a coordinate directions on top of the random walk, this means that we consider the projection of the random walk onto this coordinate direction. In addition, if $v_i \to v_{i+1}$ is an edge that lies along the cross-section of the sphere, we can define the edge $e_i$ to be the unit vector in the same direction as $v_i \to v_{i+1}$ and $o_i$ to be the unit vector that is orthogonal to the edge.  For $j \in \{x,y\}$, we define,
\begin{equation*} 
\begin{aligned}
\mathcal{FE}_i:= &\{  |S^{j}_{t_{i+1}} - v^{j}_{i+1}| \le \delta  j_3(n) \} \\
&\bigcap \left\{\max_{l \in [t_i,t_{i+1}]} |S^z_l - S^z_{t_i}| \le \sqrt{ j_3(n) \sqrt{\frac{2n}{3\log^{(2)} n}}} g(n) \right \}\\
&\bigcap \left\{\max_{l \in [t_i,t_{i+1}]}|\langle S_l -S_{t_i}, o_i \rangle| \le  \sqrt{ j_3(n) \sqrt{\frac{2n}{3\log^{(2)} n}}} g(n) \right\},
\end{aligned}
\end{equation*}
where $g(n)$ is some function such that $\lim_{n \to \infty} g(n) = \infty$. Note that, in contrast to the similar definition that was presented in equation  \eqref{eq:defFE}, we have to be more careful in how we specify transverse fluctuations. In addition to this, we bound the transverse fluctuations by the transverse factor $g(n)$ rather than $\log^{(4)} n$; adding this fluctuation parameter, $g(n)$, allows us to optimize how small we can take $j_3(n)$ while the condition $g(n) \to \infty$ still ensures that the probability of this fluctuation would be sufficiently rare.

For edges $v_i \to v_{i+1}$ that traverse cross sections, we can make them purely vertical and consider the event,
\begin{equation*}
\begin{aligned}
\mathcal{FE}_i:= & \left\{  |S^z_{t_{i+1}} - v^z_{i+1}| \le \delta \frac{ j_3(n)}{(\log^{(3)} n) t(n)} \right\}\\
& \bigcap \bigcap_{k \ne z} \left\{\max_{l \in [t_i,t_{i+1}]} |S^k_l - S^k_{t_i}| \le \sqrt{ \frac{ j_3(n)}{(\log^{(3)} n) t(n)} \sqrt{\frac{2n}{3\log^{(2)} n}}} g(n) \right \}.
\end{aligned}
\end{equation*}
In addition, we will define the event,
$$
\mathcal{FE}^i = \bigcap_{j=0}^{i} \mathcal{FE}_j,
$$
and the event $\mathcal{FE}^{f-1}$ would be the intersection of all of these events.

In what proceeds, we will present a collection of lemmas that are analogous to those that appear in the proof of part (i). Since the proofs are similar, we omit the details. 

\begin{lem} \label{lem:verttranslateiii}
In  the following, assume that the growth parameter $k_n \gg \frac{(\log^{(3)} n)^3}{\log^{(2)} n}t(n)g(n)$.
Further, recall that for any integer $j$, the vertices $v_{(m+1)j}, v_{(m+1)j+1},\ldots, v_{(m+1)j + m}$ correspond to the vertices at a cross-section of height $h_{-(\log^{(3)} n)t(n)+ j}$.

Under the event $\mathcal{FE}^{f-1}$, for times $[t_{(m+1)j},t_{(m+1)j+m}]$ and $ [t_{(m+1)(j+1)},t_{(m+1)(j+1)+m}]$, it must necessarily be the case that the walks $S_{[t_{(m+1)j},t_{(m+1)j+m}]}$ and $S_{ [t_{(m+1)(j+1)},t_{(m+1)(j+1)+m}]}$ are completely disjoint. Namely, for any $s_1 \in [t_{(m+1)j},t_{(m+1)j+m}] $ and $s_2 \in [t_{(m+1)(j+1)},t_{(m+1)(j+1)+m}]$ we have,
\begin{equation} \label{eq:zfluciii}
 |S^z_{s_1} - S^z_{s_2}| \ge (1-3 \delta)\frac{j_3(n)}{(\log^{(3)} n) t(n)}. 
\end{equation}

\end{lem}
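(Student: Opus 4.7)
The plan is to follow the structure of Lemma \ref{lem:verttranslate}: combine the vertical-edge clause of $\mathcal{FE}_i$, which pins the $z$-coordinate whenever the walk transitions between cross-sections, with the horizontal-edge clause, which controls the diffusive $z$-fluctuation along each edge of the $m$-gon, and then verify that the accumulated fluctuation is subdominant to the inter-slice spacing $\frac{j_3(n)}{(\log_3 n) t(n)}$ under the stated growth condition on $k_n$.

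First I would anchor the walk at the designated height. The vertical edge $v_{(m+1)j-1} \to v_{(m+1)j}$ enters cross-section $j$, and the $z$-clause of $\mathcal{FE}_{(m+1)j-1}$ forces
\[
|S^z_{t_{(m+1)j}} - h_{-(\log_3 n)t(n)+j}| \le \delta \frac{j_3(n)}{(\log_3 n) t(n)}.
\]
Next, for any $t \in [t_{(m+1)j}, t_{(m+1)j+m}]$, I would telescope along the horizontal edges inside this cross-section. Each horizontal $\mathcal{FE}_i$ bounds $\max_{l \in [t_i,t_{i+1}]} |S^z_l - S^z_{t_i}|$ by $\sqrt{j_3(n) \sqrt{2n/(3\log_2 n)}}\, g(n)$, so a triangle-inequality sum over the at most $m+1$ horizontal edges in the block yields
\[
|S^z_t - S^z_{t_{(m+1)j}}| \le (m+1)\sqrt{j_3(n)\sqrt{\tfrac{2n}{3\log_2 n}}}\, g(n).
\]

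The critical step is to compare the accumulated horizontal fluctuation to the slice separation. Substituting $j_3(n) = k_n \sqrt{n \log_2 n}/\log_3 n$ and invoking the hypothesis $k_n \gg \frac{(\log_3 n)^3}{\log_2 n} t(n) g(n)$ (with $m$ treated as a bounded or mildly growing parameter), one checks by direct algebra that
\[
\sqrt{j_3(n)\sqrt{\tfrac{2n}{3\log_2 n}}}\, g(n) \ll \frac{j_3(n)}{(\log_3 n) t(n)}.
\]
Combining with the anchoring step gives $|S^z_t - h_{-(\log_3 n)t(n)+j}| \le (\delta + o(1))\frac{j_3(n)}{(\log_3 n) t(n)}$ uniformly on the cross-section. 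Applying the same estimate to cross-section $j+1$ and using the triangle inequality against the slice separation $\frac{j_3(n)}{(\log_3 n) t(n)}$ produces the desired lower bound \eqref{eq:zfluciii}.

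The main obstacle lies in verifying the scale inequality in the critical step, which is precisely why the growth hypothesis on $k_n$ is imposed. One has to keep careful track of how the per-edge diffusive fluctuation depends on $k_n, t(n), g(n), \log_2 n$ and $\log_3 n$ (and on the polygon parameter $m$) and confirm that summing $m+1$ such fluctuations still remains subdominant to the vertical slice spacing. This is considerably more delicate than in part (i), where the slice spacing was of order $\frac{j_3(n)^2}{\sqrt{n \log_2 n}}$ and no sharp scale matching was required; here the spacing has shrunk to $\frac{j_3(n)}{(\log_3 n)t(n)}$, so the margin between the two scales is exactly what the hypothesis on $k_n$ is designed to guarantee.
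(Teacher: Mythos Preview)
Your proposal is correct and follows exactly the approach the paper intends: the paper's own proof consists of the single sentence ``This is analogous to Lemma \ref{lem:verttranslate},'' and you have faithfully unpacked that analogy by pinning $S^z$ via the vertical-edge clause of $\mathcal{FE}$, telescoping the $z$-fluctuation across the $m$ horizontal edges of the cross-section, and then verifying that the accumulated fluctuation $\asymp m\sqrt{j_3(n)\sqrt{n/\log_2 n}}\,g(n)$ is dominated by the slice spacing $\frac{j_3(n)}{(\log_3 n)t(n)}$ under the hypothesis on $k_n$. Your identification of this last scale comparison as the critical step (and as the reason the growth hypothesis is imposed) is exactly the point of the lemma.
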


\begin{proof}
This is analogous to Lemma \ref{lem:verttranslate}.
\end{proof}

\begin{lem} \label{lem:probFEiii}

Let $v_i \to v_{i+1}$ an edge along one of the $m$-gon cross-sections of the sphere.  In the event, $\mathcal{FE}^{f-1}$, we must have that,
\begin{equation} \label{eq:flucxyiii}
(1- 3 \delta) \| v_{i+1} - v_i \| < |\langle e_i, S_{t_i} - S_{t_{i+1}}\rangle| < (1+ 3 \delta) \|v_{i+1} -v_i\|.
\end{equation}
As a consequence of this as well as equation \eqref{eq:zfluc} we can deduce that,
\begin{equation*}
\mathbb{P}(\mathcal{FE}^{f-1}) \ge \exp\left[- C (1-\kappa)(1+4 \delta)^2 Ck_n t(n) \log^{(2)} n \right].
\end{equation*}
\end{lem}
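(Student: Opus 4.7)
The plan is to mirror the proof of Lemma \ref{lem:probFE}, but with the one-dimensional coordinate fluctuations replaced by fluctuations in the two-dimensional plane of the cross-section, and with $\log_4 n$ replaced throughout by the abstract parameter $g(n)$.

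First, I would derive the edge-direction estimate \eqref{eq:flucxyiii}. Since the event $\mathcal{FE}_{i-1}$ controls $\|S_{t_i}^{x,y} - v_i^{x,y}\| \le \delta \tilde{j}_3(n)$ and $\mathcal{FE}_i$ controls $\|S_{t_{i+1}}^{x,y} - v_{i+1}^{x,y}\| \le \delta \tilde{j}_3(n)$, the triangle inequality applied to the projection onto the unit vector $e_i$ in the cross-sectional plane gives the two-sided bound with constants $1\pm 2\delta$, which we weaken to $1\pm 3\delta$ to absorb the small vertical drift allowed by the $z$-fluctuation term in \eqref{eq:deffeiii}. This is the exact analogue of the derivation of \eqref{eq:flucxy} in Lemma \ref{lem:probFE} and of the vertical version in Lemma \ref{lem:verttranslateiii}.

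Next, I would bound $\mathbb{P}(\mathcal{FE}^{f-1})$ by iterating the Markov property at the times $t_0 < t_1 < \cdots < t_f$, exactly as in \eqref{eq:MarkovProp}. Conditioning on $S_{t_i} = p$ and on $\mathcal{FE}^{i-1}$, the event $\mathcal{TE}_i$ (the displacement at time $t_{i+1}$ lying in the appropriate small ball around $v_{i+1}$) has probability bounded below, via the standard local central limit theorem for the simple random walk in $\mathbb{Z}^3$, by
\[
\exp\!\left[-(1-\kappa)(1+3\delta)^2 \|v_{i+1}-v_i\| \sqrt{\tfrac{3 \log_2 n}{2n}}\right],
\]
uniformly in admissible $p$. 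Conditionally on this and on $\mathcal{FE}^{i-1}$, the transverse part $\mathcal{BE}_i$ fails only if the projection onto $o_i$ (or the $z$-axis) of the Brownian-like bridge on $[t_i,t_{i+1}]$ exhibits a fluctuation of order $\sqrt{\tilde{j}_3(n) \sqrt{n/\log_2 n}}\, g(n)$; by the reflection principle (as in the proof of Lemma \ref{lem:TransverseFluc}), this happens with probability at most $C\exp[-c\, g(n)^2]$, which is $o(1)$ and can be absorbed into a slight enlargement $(1+3\delta) \mapsto (1+4\delta)$ of the exponent, as was done in \eqref{eq:removbad}.

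Finally, I would multiply these conditional lower bounds over all edges. The total length $\sum_i \|v_{i+1}-v_i\|$ of $\mathcal{P}$ is the sum of horizontal contributions (roughly $2(\log_3 n) t(n)$ cross-sectional polygons, each of perimeter $O(j_3(n))$) and vertical contributions (of total length $O(j_3(n))$), giving
\[
\sum_{i=0}^{f-1} \|v_{i+1}-v_i\| \;\le\; C (\log_3 n)\, t(n)\, j_3(n) \;=\; C\, k_n t(n) \sqrt{n \log_2 n},
\]
by the definition of $j_3(n)$ and the fact that the vertical length is lower-order. Multiplying by $\sqrt{3\log_2 n/(2n)}$ converts this into $C\, k_n t(n) \log_2 n$ inside the exponent, yielding the claimed bound. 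The main obstacle, as in Lemma \ref{lem:probFE}, is keeping track of the conditional chain through the Markov property while ensuring the transverse reflection-principle losses remain negligible; the freedom to choose $g(n) \to \infty$ arbitrarily slowly is precisely what guarantees this, while still leaving $g(n)^2 \ll k_n t(n) \log_2 n$ so that the transverse losses do not pollute the leading-order exponent.
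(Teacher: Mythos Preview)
Your proposal is correct and follows exactly the approach indicated by the paper, which simply states that the proof is analogous to that of Lemma \ref{lem:probFE}. You have spelled out the details of this analogy---the triangle-inequality derivation of \eqref{eq:flucxyiii}, the Markov iteration with the reflection-principle bound on transverse fluctuations (now governed by $g(n)$ rather than $\log_4 n$), and the total-perimeter computation yielding $C k_n t(n)\log_2 n$ in the exponent---all of which are the natural adaptations the paper leaves implicit.
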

\begin{proof}
This proof is analogous to that of Lemma \ref{lem:probFEiii}.
\end{proof}

We now define events that imply good properties of the Green's functions. Consider each $i$ that corresponds to some path $v_i \to v_{i+1}$ with the coordinate direction $j$ that is part of a cross-sectional $m$-gon; this path is associated with times $t_i \to t_{i+1}$. We say that a time $l \in [t_i,t_{i+1}]$ is a good time if the following is true.
We have the event,
\begin{equation*}
\mathcal{GF}_{i,l}:= \left\{ \sum_{t=t_{i}}^{t_{i+1}} G(S_t- S_{l}) \le \frac{(1+ 20\delta)}{1-\kappa} \frac{n}{h_3(n)}  \right\}.
\end{equation*}
As before, this makes sure the local contribution to the Green's function is not too large.
The event
\begin{equation*}
\begin{aligned}
\mathcal{GL}_{i,l}:= &\bigcap_{t \in \mathcal{T} }\bigg\{  (1+ 10\delta) \frac{|t-l|}{t_{i+1} -t_i} |\langle S_{t_{i+1}} - S_{t_i}, e_i \rangle| \ge |\langle S_t - S_l, e_i \rangle| \ge (1- 10\delta) \frac{|t-l|}{t_{i+1} -t_i}|\langle S_{t_{i+1}}-S _{t_i},e_i \rangle| \bigg \},\\
& \mathcal{T}:= [t - (\log^{(2)}n)^{-1}(t_{i+1} -t_i), t + (\log^{(2)} n)^{-1}(t_{i+1} -t_i)]^c \cap [t_i,t_{i+1}] . 
\end{aligned}
\end{equation*}
This event ensures the random walk grows linearly in the direction $v_i \to v_{i+1}$. The event,
\begin{equation} \label{eq:SGiii}
\mathcal{SG}_i:=\{|\{l :(\mathcal{GL}_{i,l} \cap \mathcal{GF}_{i,l})^c \text{ occurs} \} | \le \sqrt{\zeta_n} |t_{i+1} -t_i| \}
\end{equation}
ensures that there are very few bad points in each segment. Note that the difference here is that we impose that a vanishing fraction of points along each segment is allowed to be bad rather than a small but not decreasing fraction. The function $\zeta_n$ will be chosen later and is equal to the probability that the event $(\mathcal{GF}_{i,l})^c \cup (\mathcal{GL}_{i,l})^c$ occurs.

\begin{lem}\label{lem:inter}
Let $i$ be an index such that $v_i \to v_{i+1}$ is a path that traverses along an $m$-gon cross section of the sphere. 
Consider $p$ such that $\mathbb{P}(\mathcal{FE}^{i-1},S_{t_i}=p)$ is non-zero. 
There exists some $\zeta_n$ (that does not depend on $i$ or $p$) such that $\lim_{n \to \infty} \zeta_n =0$, and, 
$$
\mathbb{P}(\mathcal{GF}_{i,l}^c \cup \mathcal{GL}_{i,l}^c| \mathcal{FE}_i, S_{t_i}=p, \mathcal{FE}^{i-1}) \le \zeta_n .
$$

We use this $\zeta_n$ to define the parameter in equation \eqref{eq:SGiii} and can deduce that,
\begin{equation*}
\mathbb{P}(\mathcal{SG}_i^c|\mathcal{FE}_i, S_{t_i}=p, \mathcal{FE}^{i-1}) \le \sqrt{\zeta_n}.
\end{equation*}
As a consequence,  we can deduce that,
\begin{equation}\label{eq:coversquareiii}
\mathbb{P}\left(\mathcal{FE}^{f-1} \bigcap_{i=0}^{f-1} \mathcal{SG}_i\right) \ge \exp\left[-(1-\kappa)(1+5 \delta)^2 Ck_n t(n) \log^{(2)} n\right],
\end{equation}
for $n$ sufficiently large.
\end{lem}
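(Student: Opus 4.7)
The plan is to mimic the three-step argument of Lemma \ref{lem:bndprobFESG}, but with two modifications tailored to phase (iii): we need the bad fraction in $\mathcal{SG}_i$ to be $\sqrt{\zeta_n}$ rather than a fixed constant $\delta$ (so that, when summed against the Green's function later in the proof of Theorem \ref{m2}(iii), the contribution of bad points remains negligible compared with the global scale $n/j_3(n)$), and we need the iterated Markov estimate to absorb the transverse fluctuation factor $g(n)$ defined in \eqref{eq:deffeiii} instead of $\log_4 n$.

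First I would establish the pointwise bound $\mathbb{P}(\mathcal{GL}_{i,l}^c \mid \mathcal{FE}_i, S_{t_i} = p, \mathcal{FE}^{i-1}) \le \zeta_n^{(1)}$. Conditional on $\mathcal{FE}_i$ and $S_{t_i} = p$, the increment $\langle S_{t_{i+1}} - S_{t_i}, e_i \rangle$ is pinned, up to a multiplicative error $1 \pm 3\delta$, to be of order $(1-\kappa)^{-1}\|v_{i+1}-v_i\|$, so we are in exactly the setting of Corollary \ref{cor:Inmiddle} (with $n$ replaced by $t_{i+1}-t_i$ and with the role of $a\phi(n)$ played by $\langle v_{i+1}-v_i, e_i\rangle$). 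The same bounding-by-lines argument from Lemma \ref{est:lem1}, after the symmetry-via-time-reversal rearrangement of increments, yields $\zeta_n^{(1)} = O(\exp[-(\log_3 t_f)^{3/2-\epsilon}])$. Then I would cite \cite[Lemma 3.1]{DemboOkada} applied to the segment $[t_i,t_{i+1}]$ (again conditional on $\mathcal{FE}_i, S_{t_i}=p, \mathcal{FE}^{i-1}$, whose constraint on $S$ only involves the endpoint distribution and transverse fluctuations) to obtain $\mathbb{P}(\mathcal{GF}_{i,l}^c \mid \cdots) \le \zeta_n^{(2)}$ with $\zeta_n^{(2)} \to 0$. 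Set $\zeta_n := \zeta_n^{(1)} + \zeta_n^{(2)}$; this gives the first claim.

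Next, I would pass from the pointwise bound to the fraction bound for $\mathcal{SG}_i^c$. Writing $N_i := |\{l \in [t_i,t_{i+1}]: (\mathcal{GL}_{i,l}\cap \mathcal{GF}_{i,l})^c\}|$ and using the pointwise estimate together with Fubini,
\begin{equation*}
\mathbb{E}[N_i \mid \mathcal{FE}_i, S_{t_i}=p, \mathcal{FE}^{i-1}] \le \zeta_n (t_{i+1}-t_i).
\end{equation*}
Markov's inequality then yields
\begin{equation*}
\mathbb{P}(\mathcal{SG}_i^c \mid \mathcal{FE}_i, S_{t_i}=p, \mathcal{FE}^{i-1}) = \mathbb{P}(N_i > \sqrt{\zeta_n}(t_{i+1}-t_i)\mid \cdots) \le \sqrt{\zeta_n},
\end{equation*}
which is the second claim. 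Note the choice of threshold $\sqrt{\zeta_n}$ is the geometric mean between the pointwise rate $\zeta_n$ and the constant $1$; this is what enables both factors to tend to zero uniformly in $i$.

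Finally, for the iterated bound \eqref{eq:coversquareiii}, I would follow the Markov-property chain from the proof of Lemma \ref{lem:probFE}. Writing
\begin{equation*}
\mathbb{P}\Bigl(\mathcal{FE}^i \cap \bigcap_{j\le i}\mathcal{SG}_j\Bigr) = \sum_p \mathbb{P}(\mathcal{FE}_i \cap \mathcal{SG}_i \mid S_{t_i}=p, \mathcal{FE}^{i-1}\cap \bigcap_{j<i}\mathcal{SG}_j)\,\mathbb{P}(S_{t_i}=p, \mathcal{FE}^{i-1}\cap \bigcap_{j<i}\mathcal{SG}_j),
\end{equation*}
the two conditional estimates above combine to show that, uniformly in $p$,
\begin{equation*}
\mathbb{P}(\mathcal{FE}_i \cap \mathcal{SG}_i \mid S_{t_i}=p, \mathcal{FE}^{i-1}) \ge (1-\sqrt{\zeta_n})\exp\bigl[-(1-\kappa)(1+4\delta)^2 \|v_{i+1}-v_i\|\sqrt{3\log_2 n/(2n)}\bigr],
\end{equation*}
exactly as in \eqref{eq:removbad}, with the factor $\exp[-(\log_4 n)^2]$ from the transverse-fluctuation step in Lemma \ref{lem:probFE} replaced by its analogue $\exp[-(1-\kappa)g(n)^2]$ from the reflection-principle bound applied to the two transverse directions appearing in \eqref{eq:deffeiii}; because $g(n)\to\infty$, this factor is still $1-o(1)$. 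Telescoping over $i = 0,\ldots,f-1$, absorbing the $f$ factors of $(1-\sqrt{\zeta_n})$ and $(1-o(1))$ into the extra $\delta$ in the exponent (here we use $f \le C k_n t(n) \log_3 n \ll \exp[\log_2 n]$), and using that the total edge length satisfies $\sum_i \|v_{i+1}-v_i\|\sqrt{3\log_2 n/(2n)} \le C k_n t(n) \log_2 n$ on the path $\mathcal{P}$ of \eqref{eq:pathconst}, yields \eqref{eq:coversquareiii}.

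The main obstacle is the second step: verifying that the constant $\sqrt{\zeta_n}$ is small enough to be negligible in the capacity lower bound derived later, yet large enough that the Markov-inequality fraction bound does not cost an exponential factor competing with $\exp[-Ck_n t(n)\log_2 n]$. This is precisely why $\zeta_n$ must be replaced by its square root: a constant-fraction threshold (as in phase (i)) would contribute a non-negligible error to the Green's function sum at scale $n/j_3(n)$, which is now the dominant scale, while taking the threshold to be $\zeta_n$ itself would force Markov's inequality to give a trivial bound of $1$.
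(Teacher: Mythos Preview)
Your proposal is correct and follows essentially the same approach as the paper's own proof, which simply refers back to Lemma \ref{lem:bndprobFESG} for the pointwise bound, invokes Markov's inequality with the $\sqrt{\zeta_n}$ threshold, and then appeals to the iteration in the proof of \eqref{eq:coversquare}. Your write-up is in fact more explicit than the paper's: you spell out why the threshold must be $\sqrt{\zeta_n}$ rather than a fixed $\delta$, and you track the replacement of $\log_4 n$ by $g(n)$ in the transverse-fluctuation step, neither of which the paper comments on.
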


\begin{proof}
One can follow the proof of Lemma \ref{lem:bndprobFESG} to find some $\delta_i$ such that the probability of $\mathbb{P}(\mathcal{GF}_{i,l}^c \cup \mathcal{GL}_{i,l}^c| \mathcal{FE}_i, S_{t_i}=p, \mathcal{FE}^{i-1}) \le \zeta_n $ for some $\zeta_n \to  0$. Then, we can apply Markov's inequality to argue that $\mathbb{P}(\mathcal{SG}_i^c|\mathcal{FE}_i, S_{t_i}=p, \mathcal{FE}^{i-1}) \le \sqrt{\zeta_n}$. 

After that, the proof of equation \eqref{eq:coversquareiii} follows the proof of equation \eqref{eq:coversquare}. 

\end{proof}

We can now introduce our final theorem on the computation of the capacity.
\begin{thm}\label{thm:lowerbndicapiii}
Consider those $j_3(n)$ that satisfy $k_n \gg \frac{(\log^{(3)} n)^3}{\log^{(2)} n}t(n)g(n)$.
Also, consider a walk that satisfies the event $\mathcal{FE}^{f-1} \bigcap_{i=0}^{f-1} \mathcal{SG}_i$. In this case, we can find some constant $C_{\delta}$ such that $C_{\delta} \to 0$ as $\delta \to 0$ and another constant $C_m$ such that $C_m \to 0$ as $m \to \infty$, in order to satisfy
\begin{equation*}
R_{t_f} \ge (1-\kappa) (1- C_{\delta}) (1-C_m)\ca(\mathcal{B}_{j_3(n)}).
\end{equation*}

\end{thm}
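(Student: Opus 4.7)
The plan is to apply, as in Theorem \ref{thm:lowerbndicap}, the capacity lower bound
\[
\ca(S_{[0,t_f]}) \ge \frac{|\mathcal{L}|}{\max_{l \in \mathcal{L}} \sum_{l' \in \mathcal{L}} G(S_l - S_{l'})},
\]
where $\mathcal{L}$ is the set of indices $l$ lying on horizontal (cross-sectional) edges at which both $\mathcal{GF}_{i,l}$ and $\mathcal{GL}_{i,l}$ hold. By Lemma \ref{lem:inter} (via the $\mathcal{SG}_i$ events), $|\mathcal{L}| \ge (1-\sqrt{\zeta_n})t_f$. The heart of the proof is to establish that for every $l \in \mathcal{L}$,
\[
\sum_{l' \in \mathcal{L}} G(S_l - S_{l'}) \le (1+C_\delta)(1+C_m)(1-\kappa)^{-1}\frac{|\mathcal{L}|}{\ca(\mathcal{B}_{j_3(n)})}.
\]

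I would split the sum over $l'$ into three parts: (i) the same horizontal segment as $l$; (ii) other segments of the same cross-sectional $m$-gon as $l$; and (iii) segments lying on different cross-sections. Contribution (i) is bounded directly by $\mathcal{GF}_{i,l}$ by $(1+20\delta)n/[(1-\kappa)h_3(n)] = O(\sqrt{n/\log_2 n})$. For (ii), using the $\mathcal{GL}_{i',l}$ events on the remaining $m-1$ segments of the same polygon (which has total perimeter $\asymp j_3(n)$ and is traversed at the critical rate), the one-dimensional Green-function summation method of Theorem \ref{thm:lowerbndicap} produces another $O(\sqrt{n/\log_2 n})$ term up to subleading logarithmic corrections. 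Since the target $|\mathcal{L}|/\ca(\mathcal{B}_{j_3(n)})$ is of order $t(n)\log_3(n)\sqrt{n/\log_2 n}$, both (i) and (ii) are subdominant because $t(n)\to\infty$.

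The dominant contribution is (iii). By Lemma \ref{lem:verttranslateiii} and the analogous linear-growth control from $\mathcal{GL}$, the points of $\mathcal{L}$ on the cross-section at height $h_{m'}$ form a perturbed $m$-gon at vertical distance at least $(1-3\delta)|m-m'|\,j_3(n)/[(\log_3 n)t(n)]$ from $S_l$, traversed linearly at the critical speed. Using $G(x)=(3+o(1))/(2\pi|x|)$, each segment's Green-sum approximates a line integral; summing over the $m$ segments at fixed $m'$ and then over $m'$ with spacing $j_3(n)/[(\log_3 n)t(n)]$ yields a Riemann sum that converges, up to a $C_m$ error coming from replacing each $m$-gon by its circumscribing circle, to
\[
\frac{|\mathcal{L}|}{4\pi j_3(n)^2}\int_{\partial \mathcal{B}_{j_3(n)}} G(S_l - y)\,d\sigma(y) \;=\; \frac{|\mathcal{L}|}{\ca(\mathcal{B}_{j_3(n)})},
\]
invoking the classical identity that the potential of uniform surface measure on $\partial \mathcal{B}_r$ at any point of $\overline{\mathcal{B}_r}$ equals $1/\ca(\mathcal{B}_r)$.

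The main technical obstacle will be justifying this Riemann-sum approximation for cross-sections with $m'$ close to $m$: the transverse fluctuation scale $\sqrt{\tilde j_3(n)\sqrt{n/\log_2 n}}\,g(n)$ could in principle compete with the inter-cross-section spacing $j_3(n)/[(\log_3 n)t(n)]$. The hypothesis $k_n\gg (\log_3 n)^3 t(n)g(n)/\log_2 n$ is precisely the quantitative condition that makes the spacing dominate the fluctuation scale by a power of $\log_3 n$; this ensures that only an $o(1)$ fraction of the global sum comes from heights where the clean sphere approximation could fail, while the remaining portion is accurately captured. Absorbing the polygon-discretization error into $C_m$ and the speed-loss factor into $(1-\kappa)^{-1}$ then yields the stated bound, and inserting it into the capacity lower bound completes the proof.
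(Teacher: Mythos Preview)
Your proposal is correct and follows essentially the same approach as the paper: apply the capacity lower bound $\ca(S)\ge |S|/\max_s\sum_{s'}G(s-s')$ to the good set $\mathcal{L}$, observe that the local (same-segment) contribution controlled by $\mathcal{GF}_{i,l}$ is now subdominant, and show that the dominant global contribution is a Riemann sum converging to the potential of uniform surface measure on $\partial\mathcal{B}_{j_3(n)}$, which equals $1/\ca(\mathcal{B}_{j_3(n)})$. Your three-way decomposition and explicit invocation of the sphere-potential identity are in fact more carefully stated than the paper's brief sketch, which simply points back to Theorem~\ref{thm:lowerbndicap} and asserts convergence to the spherical integral.
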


\begin{proof}
As in the proof of Theorem \ref{thm:lowerbndicap}, we can bound the capacity of a set by the inequality,
\begin{equation*}
\ca(S) \ge \frac{|S|}{\max_s\sum_{s'} G(s-s')}.
\end{equation*}
As before, we let $\mathcal{L}$ be the good times that satisfy the events $\mathcal{GL}_{i,l}$ and $\mathcal{GF}_{i,l}$. 
We can now compute the Green's functions in a similar way as in Theorem \ref{thm:lowerbndicap}. We remark here that the main contribution is no longer from the local segment, but from the global contribution. We also remark that choosing heights uniformly, as well as the linear rate of traversal along each segment from Lemma \ref{lem:probFEiii}, ensure that the global contribution to $\sum_{l' \in \mathcal{L}} G(S_l- S_{l'})$ approaches the integral $\int\frac{1}{\|x-y\|} \text{d}\mu(x) \text{d} \mu(y)$, where $\text{d}\mu$ is a uniform measure on the surface of a sphere of radius $j_3(n)$ as one takes $m$ larger and larger. 
\end{proof}

We can now finish the proof of part (iii) of Theorem \ref{m2}.
\begin{proof}[Part (iii) of Theorem \ref{m2}]
Using Lemma \ref{lem:inter}, we can ensure that the event $\mathcal{FE}^{f-1} \cap_{i=0}^{f-1} \mathcal{SG}_i$ can occur infinitely often. (Note that this controls the beginning of the walk up to time $n k_n t(n)$ rather than up to time $n$. The end of the walk can be arbitrary as long as it says within the sphere of radius $j_3(n)$. This is undemanding and can be imposed by having the walk go back and forth from the terminal point at time $n k_n t(n)$ and the center.) By taking $m \to \infty$ and $\delta, \kappa \to 0$, we can ensure the lower bound of $\ca(\mathcal{B}_{j_3(n)})$ on the random walk. 
\end{proof}

\appendix 

\section{Assorted Estimates }
This appendix will contain assorted estimates on the capacity that will appear periodically throughout the paper. 

\begin{lem} \label{lem:uprlwrcap}
Given a set $X \in \mathbb{Z}^3$, we have that
\begin{equation*}
 \frac{|X|}{\max_{x \in X} \sum_{y \in X} G(x-y)}\le \ca(X) \le \frac{|X|}{\min_{x \in X} \sum_{y \in X} G(x-y)}.
\end{equation*}
In the formula above, $X$ is allowed to contain multiple points. 
That is, we count the overlap in $\sum_{y \in X}$ or $|X|$. 

\end{lem}
\begin{proof}
For simplicity, we will present the proof when $X$ only contains distinct points. 
From the last hitting time decomposition, we can write, for $x \in X$
\begin{equation*}
1=  \sum_{y \in X} G(x-y) \mathbb{P}^y(T_X = \infty).
\end{equation*}
By adding all of these equations, we see that
\begin{equation*}
|X| =  \sum_{x \in X} \mathbb{P}^x(T_X = \infty) \sum_{y \in X} G(x-y).
\end{equation*}
We can either bound $ \sum_{y \in X} G(x-y)$ from above by the maximum, or from below by the minimum. Thus, we see that,
\begin{equation*}
 \frac{|X|}{\max_{x \in X} \sum_{y \in X} G(x-y)}\le\sum_{x \in X} \mathbb{P}^x(T_X = \infty) \le \frac{|X|}{ \min_{x \in X} \sum_{y \in X} G(x-y)}.
\end{equation*}

\end{proof}

\begin{lem} \label{lem:uprsumcap}
Let $X$ and $Y$ be two set, not necessarily distinct and each allowing multiple points. We have that,
\begin{equation*}
\ca(X \cup Y) \le \ca(X) + \frac{|X \cup Y|}{ \min_{y \in Y} \sum_{z \in X \cup Y} G(z-y)}.
\end{equation*}
\end{lem}
\begin{proof}
Again, for simplicity, we will only present the proof when $X$ and $Y$ only consist of distinct points without multiplicity. 
We see that by the first hitting time decomposition, we have, for $z_1 \in X \cup Y$
\begin{equation*}
1 = \sum_{z_2 \in X \cup Y} G(z_1 -z_2) \mathbb{P}^{z_2}(T_{X \cup Y} = \infty).
\end{equation*}
We sum up this quantity over all $y$ in $Y$ and observe that,
\begin{equation*}
\begin{aligned}
& |X| + |Y| = \sum_{z_1 \in X \cup Y} \mathbb{P}^{z_1}(T_{X \cup Y} = \infty) \sum_{z_2 \in X \cup Y} G(z_1-z_2)  \ge  \sum_{y \in Y} \mathbb{P}^{y}(T_{X \cup Y } =\infty) \sum_{z \in X \cup Y} G(y-z)\\
& \ge \sum_{ y \in Y} \mathbb{P}^y(T_{X \cup Y} = \infty) \min_{y \in Y} \sum_{z \in X \cup Y} G(y-z).
\end{aligned}
\end{equation*}
Thus, 
\begin{equation*}
\sum_{ y \in Y} \mathbb{P}^y(T_{X \cup Y} = \infty) \le \frac{|X|+|Y|}{\min_{y \in Y} \sum_{z \in X \cup Y} G(y-z)}.
\end{equation*}
Now,
\begin{equation*}
\begin{aligned}
    &\ca(X \cup Y) = \sum_{x \in X} \mathbb{P}^x(T_{X \cup Y} = \infty) + \sum_{y \in Y} \mathbb{P}^y(T_{X \cup Y} = \infty)  \le \sum_{x \in X} \mathbb{P}^x(T_X = \infty) + \sum_{y \in Y} \mathbb{P}^{y}(T_{X \cup Y} = \infty) \\
    &\le \ca(X) + \frac{|X|+|Y|}{ \min_{y \in Y} \sum_{z \in X \cup Y} G(y-z)}.
\end{aligned}
\end{equation*}
\end{proof}

\end{document}